\documentclass[12pt,twoside]{amsart}
\usepackage{amssymb,amsmath,amsthm, amscd, enumerate, mathrsfs}
\usepackage{graphicx, hhline}
\usepackage[colorlinks=true,pagebackref,hyperindex]{hyperref}
\usepackage[all]{xy}
\usepackage[backrefs, alphabetic, initials]{amsrefs}
\usepackage{color}  
\usepackage{latexsym}
\usepackage[T1]{fontenc} 
\usepackage{fancyhdr}
\usepackage{enumerate}

\title[very basic slc-trivial fibrations]
{Remarks on very basic slc-trivial fibrations}
\author{Haidong Liu}
\date{2020/10/21, version 0.03}
\subjclass[2010]{Primary 14E30; Secondary 14N30}
\keywords{canonical bundle formula, slc-trivial fibration, finite generation}
\address{Peking University, Beijing International Center for Mathematical Research, 
Beijing, 100871, China}
\email{hdliu@bicmr.pku.edu.cn}

\DeclareMathOperator{\Supp}{Supp}

\DeclareMathOperator{\rank}{rank}

\DeclareMathOperator{\mult}{mult}
\DeclareMathOperator{\Pic}{Pic}
\DeclareMathOperator{\Ex}{Ex}
\DeclareMathOperator{\Div}{Div}
\DeclareMathOperator{\Exc}{Exc}
\DeclareMathOperator{\id}{id}
\DeclareMathOperator{\Gr}{Gr}
\DeclareMathOperator{\GL}{GL}
\DeclareMathOperator{\prim}{prim}


\newtheorem{thm}{Theorem}[section]
\newtheorem{lem}[thm]{Lemma}
\newtheorem{prop}[thm]{Proposition}
\newtheorem{conj}[thm]{Conjecture}
\newtheorem{cor}[thm]{Corollary}

\theoremstyle{definition}

\newtheorem{defn}[thm]{Definition}
\newtheorem{rem}[thm]{Remark}
\newtheorem*{ack}{Acknowledgments}  
\newtheorem{say}[thm]{}

\newtheorem{case}{Case}


\makeatletter
    
    \@addtoreset{equation}{section}
\makeatother

\setlength{\topmargin}{-1cm}
\setlength{\oddsidemargin }{-1pt}
\setlength{\evensidemargin }{-1pt}
\setlength{\textwidth}{460pt}
\setlength{\textheight}{24cm}
\begin{document}

\begin{abstract}
We study very basic slc-trivial fibrations.
We show that restricting on any union of lc centers of a very basic slc-trivial fibration, a $\mathbb Q$-Cartier moduli part is numerically trivial if and only if it is $\mathbb Q$-linearly trivial.
We then prove that the abundance conjecture for very basic slc-trivial fibrations holds true when the base is of dimension 2 and the moduli part is $\mathbb Q$-Cartier.
As an application, we prove that the log canonical ring of a projective 
plt pair with Kodaira dimension 3 is finitely generated. 
\end{abstract}

\maketitle 
\tableofcontents

\section{Introduction}\label{sec1}

We begin with Kodaira's canonical bundle formula 
in a generalized form due to Ueno \cite{ueno}, Kawamata \cite{kawamata-hist} and Fujita \cite{fujita-hist}.
Let $f \colon S\to C$ be an elliptic surface 
and $J \colon C\to \mathbb P^1$ the $j$-function.
Then there are $\mathbb Q$-divisors $B_S$, $B_C$ and $M_C$ such that
\begin{equation}
K_S+B_S=f^*(K_C+B_C+M_C)
\end{equation}
where $(S, B_S)$ and $(C, B_C)$ are log pairs related by adjunction, and 
\begin{equation}\label{eq1.1}
\mathcal O_C(12M_C)\simeq J^*\mathcal O_{\mathbb P^1}(1)
\end{equation}
which implies that $M_C$ is semi-ample.
These results can be generalized further to the case that
$f \colon X\to Y$ is an {\em{elliptic fibration}}, that is, an algebraic fiber space 
whose general fiber is an elliptic curve,
and the $j$-function extends to a morphism $J\colon Y\to \mathbb P^1$.
In this setting,
Kodaira's original canonical bundle formula is a special case 
where $f$ is minimal and $\dim Y=1$.

This generalized formula 
plays a central role in the study of elliptic fibrations.
It is natural to expect that some more general kinds of
canonical bundle formulas could play a similar role in the study of the {\em{K-trivial fibration}},
that is, a projective surjective morphism $f \colon (X, B_X)\to  Y$ where $(X, B_X)$ has mild singularities
and $K_X+B_X$ is $\mathbb Q$-linearly trivial over $Y$.

In this direction, Mori's work \cite{mori} is one of the starting points.
It is a prototype of the so-called Fujino--Mori canonical bundle formula,
established by Fujino and Mori in \cite{fujino-mori} combining Kawamata's work \cite{kawamata-2} and an unpublished preprint of Mori's.
At around the same time, Ambro started to study some applications of \cite{kawamata-2}.
He formulated and studied lc-trivial fibrations in \cite{ambro-shokurov}, which is later renamed as
klt-trivial fibrations by Fujino and Gongyo to distinguish with 
their lc-trivial fibrations formulated in \cite{fujino-gongyo}.
In that paper, Fujino and Gongyo
 further showed how to reduce some problems for lc-trivial fibrations 
(Fujino--Gongyo's lc-trivial fibrations)
to those for klt-trivial fibrations (Ambro's lc-trivial fibrations). 
Recently, Fujino generalized klt-trivial fibrations and lc-trivial fibrations to the so-called 
basic slc-trivial fibrations in \cite{fujino-slc-trivial}, where using some deep results of theory of
variation of mixed Hodge structures on cohomology with compact support.
For more details about the involving history, see \cite{ambro-shokurov}*{Introduction}, \cite{kollar-hist}
and \cite{fujino-slc-trivial}*{1.15} for the surveys.

The following definition summarizes the definitions of klt-trivial fibrations \cite{ambro-shokurov}*{Definition 2.1}, 
lc-trivial fibrations \cite{fujino-gongyo}*{Definition 3.2} and 
basic slc-trivial fibrations \cite{fujino-slc-trivial}*{Definition 4.1} simultaneously. 
They are all slightly different from the original ones.
Moreover, we define a special case of basic slc-trivial fibrations,
which is similar to {\em{qlc pairs}} in \cite{ambro}, \cite{fujino-foundations}.

\begin{defn}\label{slc-def}
A {\em{pre-basic slc-trivial}} (resp. {\em{lc-trivial}},
{\em{klt-trivial}}) {\em{fibration}} 
$f \colon (X, B_X) \to (Y,D)$
consists of a projective surjective morphism $f\colon X\to Y$ and
a simple normal crossing pair $(X, B_X)$ satisfying the following properties:
\begin{itemize}
\item[$(1)$] $Y$ is a normal variety,
\item[$(2)$] every stratum of $X$ is dominant onto $Y$ and $f_*\mathcal O_X \simeq \mathcal O_Y$,
\item[$(3)$] $(X,B_X)$ is sub-slc (resp. sub-lc, sub-klt) over $Y$,
\item[$(4)$] $D$ is a  $\mathbb Q$-Cartier $\mathbb Q$-divisor on $Y$ 
such that $K_X+B_X\sim_{\mathbb Q}f^*D$.
\end{itemize} 
If a pre-basic slc-trivial (resp. lc-trivial, klt-trivial) fibration $f \colon (X, B_X) \to (Y,D)$ satisfies
\begin{itemize}
\item[$(5)$] $\rank f_*\mathcal O_X(\lceil -(B^{<1}_X)\rceil)=1$,
\end{itemize}
then it is called a {\em{basic slc-trivial}} (resp. {\em{lc-trivial, klt-trivial}}) {\em{fibration}};
furthermore, if the basic slc-trivial (resp. lc-trivial, klt-trivial)  
fibration also satisfies
\begin{itemize}
\item[$(6)$] the natural map $\mathcal O_Y\to f_*\mathcal O_X(\lceil -(B^{<1}_X)\rceil)$ is an isomorphism,
\end{itemize}
then it is called a {\em{very basic slc-trivial}} (resp. {\em{lc-trivial, klt-trivial}}) {\em{fibration}}.
\end{defn}

\begin{rem}\label{rem1.2}
It is more natural (cf. \cite{fujino-foundations}*{Remark 6.2.3}) to replace the condition (4) above by that 
\begin{itemize}
\item[$(4')$] $\mathcal D$ is a $\mathbb Q$-line bundle on $Y$ such that $K_X+B_X\sim_{\mathbb Q}f^*\mathcal D$.
\end{itemize} 
Let $\Div(X)$ be the group of Cartier divisors on $X$ and $\Pic(X)$ the Picard group of $X$. Let 
$$
\delta_X\colon \Div(X)\otimes \mathbb Q \to \Pic(X)\otimes \mathbb Q 
$$
be the homomorphism induced by $A \to \mathcal O_{X}(A)$ where $A$ is a Cartier divisor on $X$.
Then
$K_X+B_X\sim_{\mathbb Q}f^*\mathcal D$ means that
$$
\delta_X(K_X+B_X)=f^*\mathcal D
$$
in $ \Pic(X)\otimes \mathbb Q $. 
Let $b=\min \{m\in \mathbb Z_{>0}| m(K_F+B_F)\sim 0\}$
where $F$ is a general fiber of $f$ and $K_F+B_F=(K_X+B_X)|_F$.
It follows that we can take a  rational function $\varphi\in \Gamma (X, \mathcal K^*_X)$
and a $\mathbb Q$-Cartier $\mathbb Q$-divisor $D$ on $Y$
(see \cite{fujino-slc-trivial}*{Section 6})
such that $\mathcal O_Y(D)=\mathcal D$ in $ \Pic(Y)\otimes \mathbb Q$ and
\begin{equation}\label{eq1.2}
K_X+B_X+\frac{1}{b}(\varphi)=f^*D.
\end{equation}
Note that $D$ depends on the choice of $\varphi$.
Then, when we use the condition $(4)$ of Definition \ref{slc-def},
we always implicitly replace $D$ by a $\mathbb Q$-linearly equivalent divisor 
satisfying \eqref{eq1.2}. This replacement happens in 
\cites{ambro-shokurov, fujino-gongyo, fujino-slc-trivial, fl-plt, liu} 
and others without mentioning.
Following the literatures, 
we abuse of terminology throughout this paper
 if there is no risk of confusion.
\end{rem}

\begin{rem}\label{rem1.3}
A {\em{qlc pair}} is a data $(Y,D, f\colon (X,B_X) \to Y)$
where $f$ is a proper morphism from a globally
embedded simple normal crossing pair $(X,B_X)$, $B_X$ is a sub boundary, $D$ is 
a $\mathbb Q$-Cartier $\mathbb Q$-divisor (or a $\mathbb Q$-line bundle) on $Y$ such that
$K_X+B_X\sim_{\mathbb Q}f^*D$, and $\mathcal O_Y\simeq f_*\mathcal O_X(\lceil -(B^{<1}_X)\rceil)$.
 A {\em{normal qlc pair}} is a qlc pair $(Y,D, f\colon (X,B_X) \to Y)$
where $Y$ is normal.
By \cite{fujino-slc-trivial}*{Theorem 10.3},
after some modifications of $(X,B_X)$,
a normal qlc pair is a very basic slc-trivial fibration.
Conversely, the pair $(X,B_X)$ in a very basic slc-trivial fibration is not necessary to be globally
embedded simple normal crossing, thus
a very basic slc-trivial fibration is not necessarily a normal qlc pair.
But in practice, the very basic slc-trivial fibration we meet is always a normal qlc pair,
e.g., the constructions in the proofs of \cite{fl-plt}*{Theorem 1.2} and \cite{liu}*{Theorem 1.1},
or at least, some techniques of the qlc theory still work.
For more details about qlc pairs, see \cites{ambro, fujino-foundations, fl-db} and the  
references therein.
\end{rem}

We introduce the notion of very basic slc-trivial fibrations because they show up in the study of the
finite generation conjecture whose lc pair is not big, as we have seen in \cites{fl-plt, liu}.
The main purpose of this paper is a continuous work of {\em{loc. cit.}} to 
prove the finite generation for plt pairs with Kodaira dimension 3.

\begin{thm}\label{main-cor}
Let $(X, \Delta)$ be a projective plt pair such 
that $\Delta$ is a $\mathbb Q$-divisor. 
Assume that $\kappa (X, K_X+\Delta)=3$. 
Then the log canonical ring 
$$
R(X, \Delta)=\bigoplus _{m\geq 0}H^0(X, \mathcal 
O_X(\lfloor m(K_X+\Delta)\rfloor))
$$ 
is a finitely generated $\mathbb C$-algebra. 
\end{thm}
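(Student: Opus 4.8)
The plan is to reduce the statement, via the canonical bundle formula applied to the Iitaka fibration, to the semi-ampleness of a moduli-type $\mathbb Q$-divisor on the three-dimensional base of a very basic slc-trivial fibration, and then to prove that semi-ampleness by combining the base point free theorem on the klt part with the two results of this paper: the ``numerically trivial $\Leftrightarrow$ $\mathbb Q$-linearly trivial on unions of lc centers'' statement and the abundance theorem for very basic slc-trivial fibrations with base of dimension two and $\mathbb Q$-Cartier moduli part.

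First I would set up the reduction as in \cite{fl-plt} and \cite{liu}. Passing to a log resolution of $(X,\Delta)$ and to the Iitaka fibration of $K_X+\Delta$, and carrying out the modifications of \cite{fujino-slc-trivial} (here the plt hypothesis is used, as in \cite{fl-plt}, to guarantee that the fibration is very basic, cf. Remark \ref{rem1.3}), I obtain a very basic slc-trivial fibration $f\colon (X',B_{X'})\to (Y,D)$ with $Y$ normal projective of dimension $3$, with $D\sim_{\mathbb Q}K_Y+B_Y+M_Y$ big (since $f$ is the Iitaka fibration), and with moduli part $M_Y$, which I may assume $\mathbb Q$-Cartier after a blow-up of $Y$. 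Because $K_X+\Delta$ is the pull-back of $D$ up to $\mathbb Q$-linear equivalence and condition $(6)$ of Definition \ref{slc-def} makes $f_*\mathcal O_{X'}(\lceil-(B^{<1}_{X'})\rceil)=\mathcal O_Y$ exactly, $R(X,\Delta)$ is finitely generated if and only if $R(Y,D)=\bigoplus_m H^0(Y,\mathcal O_Y(\lfloor mD\rfloor))$ is; so it suffices to prove that $K_Y+B_Y+M_Y$ is semi-ample.

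Next I would run a $(K_Y+B_Y)$-MMP with scaling on $Y$, treating $M_Y$ as a nef part (equivalently, repeatedly replacing $X'$ so as to stay within the class of very basic slc-trivial fibrations). As $\dim Y=3$ and $K_Y+B_Y+M_Y$ is big, hence pseudo-effective, this terminates; after replacing $Y$ by the output I may assume that $K_Y+B_Y+M_Y$ is nef and big. Now semi-ampleness is proved as in the base point free theorem for lc pairs: on the locus where $(Y,B_Y)$ is klt the usual base point free theorem applies, since $(a-1)(K_Y+B_Y+M_Y)$ is nef and big for $a>1$, and it remains to lift sections to $Y$ from the union $W$ of lc centers of $(Y,B_Y)$, which has dimension $\le 2$. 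By the restriction result of this paper the structure of very basic slc-trivial fibration restricts from $(Y,D)$ to $W$, and the restriction of $K_Y+B_Y+M_Y$ to $W$, which by adjunction is $K_W+B_W+M_W$, is semi-ample by the abundance theorem of this paper for very basic slc-trivial fibrations with base of dimension two and $\mathbb Q$-Cartier moduli part; gluing this with the free structure off $W$, via the vanishing theorems for quasi-log schemes, yields that $K_Y+B_Y+M_Y$ is semi-ample on $Y$, hence that $R(Y,D)$ — and therefore $R(X,\Delta)$ — is a finitely generated $\mathbb C$-algebra.

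The hard part is the last step, and within it the treatment of the surface lc centers of the base: this is exactly why one needs the abundance theorem for very basic slc-trivial fibrations with two-dimensional base together with the $\mathbb Q$-Cartier hypothesis on the moduli part, since b-semi-ampleness of the moduli b-divisor is not known in general and the argument relies essentially on the low dimension of the base and of its lc centers. Secondary technical points requiring care are the compatibility of the canonical bundle formula with the modifications needed to remain within the class of very basic slc-trivial fibrations, the preservation of the moduli b-divisor under the MMP on the base, and the gluing of the local semi-ample structures on $W$ and off $W$.
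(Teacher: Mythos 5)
Your proposal is correct and follows essentially the same route as the paper's (sketched) proof: reduction via the Iitaka fibration and \cite{fl-plt} to a very basic (k)lt-trivial fibration over a $\mathbb Q$-factorial plt threefold base with $D$ big, termination of the $D$-MMP to make $D$ nef and big, a Kawamata--Shokurov-type basepoint-free argument reducing everything to semi-ampleness of $D$ restricted to the lc locus $B^{=1}_Y$, and then Theorem \ref{main-thm} applied to the induced fibration over that normal surface. The only cosmetic difference is that the paper exploits pltness to see $B^{=1}_Y$ as a disjoint union of normal surfaces carrying induced very basic klt-trivial fibrations (via Corollary \ref{ind-cor}), which is exactly the normality and $\mathbb Q$-Cartier setup your gluing step needs.
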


To this end, we study very basic slc-trivial fibrations in details and prepare some more
general and technical results.
Let $f\colon (X, B_X) \to (Y,D)$ be a pre-basic slc-trivial fibration.
Put 
$$
B_Y:=\sum _P (1-b_P)P, 
$$ 
where $P$ runs over prime divisors on $W$ and 
$$
b_P:=\max \left\{t \in \mathbb Q \mid
 {\text{$(X, B_X+tf^*P)$ is sub-slc over the generic point of $P$}} \right\}.  
$$ 
The $\mathbb Q$-divisor $B_Y$ is well-defined 
and
is called the {\em{discriminant $\mathbb Q$-divisor}} (or {\em{discriminant part}})
of $f\colon (X, B_X) \to (Y,D)$.
We further put 
$$
M_Y:=D-K_Y-B_Y, 
$$
where $K_Y$ is the canonical divisor of $Y$. 
Then the $\mathbb Q$-divisor $M_Y$ is called the {\em{moduli $\mathbb Q$-divisor}} 
(or {\em{moduli part}}) of $f\colon (X, B_X) \to (Y,D)$.
Usually we call $D=K_Y+B_Y+M_Y$ the {\em{structure decomposition}}.
See \cite{fujino-slc-trivial} for more details.

\begin{conj}[\cite{fl-normal}*{Conjecture 1.5}]\label{lc-conj}
Let $f\colon (X, B_X) \to (Y,D)$ be a very basic slc-trivial fibration
with the structure decomposition $D=K_Y+B_Y+M_Y$.
Then there exists an effective $\mathbb Q$-divisor $\Delta\sim_{\mathbb Q}B_Y+M_Y$ on 
$Y$ such that $(Y, \Delta)$ is log canonical.
\end{conj}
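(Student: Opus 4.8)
The plan is to derive Conjecture~\ref{lc-conj} from a semi-ampleness statement for the moduli part, after first checking that the pair attached to the discriminant part alone is already log canonical.

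First I would observe that, for a \emph{very basic} slc-trivial fibration, $(Y,B_Y)$ is log canonical rather than merely sub-log canonical. That $(Y,B_Y)$ is sub-log canonical is standard (see \cite{fujino-slc-trivial}): taking $t=0$ in the definition of $b_P$ gives $b_P\ge 0$, so every coefficient $1-b_P$ of $B_Y$ is $\le 1$, and this persists after birational base change and resolution because the discriminant part is compatible with such pullbacks. Effectivity, that is, $b_P\le 1$ for every prime divisor $P\subset Y$, is exactly where condition $(6)$ is used. Suppose $b_P>1$ for some $P$. Writing $f^*P=\sum_i a_iV_i$ over the generic point of $P$ with $a_i\ge 1$, and letting $b_i$ be the coefficient of $V_i$ in $B_X$, sub-slc-ness of $(X,B_X+b_Pf^*P)$ over the generic point of $P$ forces $b_i+a_ib_P\le 1$, hence $b_i<1-a_i$ and $\lceil -b_i\rceil\ge a_i$ for every $i$. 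Consequently $\lceil -(B^{<1}_X)\rceil\ge \sum_i a_iV_i$ in a neighbourhood of $f^{-1}(P)$, so if $t_P$ is a local equation of $P$ then $1/f^*t_P$ defines a local section of $f_*\mathcal O_X(\lceil -(B^{<1}_X)\rceil)$ near the generic point of $P$ that does not come from $\mathcal O_Y$, contradicting $(6)$. Therefore $B_Y\ge 0$, and $(Y,B_Y)$ is log canonical.

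Granting this, it suffices to produce an effective $\mathbb Q$-divisor $N\sim_{\mathbb Q}M_Y$ with sufficiently general support: then $\Delta:=B_Y+N$ is effective, $\mathbb Q$-linearly equivalent to $B_Y+M_Y$, and $(Y,\Delta)$ is log canonical by a Bertini-type argument for log canonical pairs (choose $m$ divisible enough that $|mM_Y|$ is base point free and $1/m$ is small, and take $N=\tfrac1m N'$ for $N'\in|mM_Y|$ general). Such an $N$ exists as soon as $M_Y$ is semi-ample; and to reduce to this one may replace $Y$ by a sufficiently high birational model $\mu\colon Y'\to Y$ of the base, which is harmless since the structure decomposition is compatible with birational base change, so that in effect one needs the moduli $\mathbb Q$-b-divisor $\mathbf M$ to be b-semi-ample. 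Now $\mathbf M$ is already known to be b-nef by \cite{fujino-slc-trivial}, so the whole problem is the semi-ampleness. When $\dim Y=2$ and $M_Y$ is $\mathbb Q$-Cartier, the abundance result for very basic slc-trivial fibrations proved in this paper upgrades the b-nef $M_Y$ to a semi-ample $\mathbb Q$-Cartier divisor on $Y$, and the argument above then establishes Conjecture~\ref{lc-conj} in that case; the crucial input there is the result of this paper that a $\mathbb Q$-Cartier moduli part restricted to a union of log canonical centers is $\mathbb Q$-linearly trivial once it is numerically trivial, which handles the case where the moduli part is numerically trivial on some log canonical stratum — the delicate point in any abundance argument of this kind.

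The main obstacle is this semi-ampleness in general. For arbitrary very basic slc-trivial fibrations it is a form of the b-semi-ampleness conjecture of Prokhorov--Shokurov, which is wide open; and even over a surface the passage from ``b-nef'' to ``semi-ample'' is not formal, relying on the numerical-versus-$\mathbb Q$-linear triviality statement together with abundance for surfaces. Moreover, when $M_Y$ is not a priori $\mathbb Q$-Cartier one cannot argue on $Y$ itself and is forced onto a birational model, so one genuinely needs the b-version rather than semi-ampleness on $Y$.
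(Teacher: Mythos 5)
This statement is a conjecture that the paper does not prove; the only case the paper settles is the one where $M_Y$ is $\mathbb Q$-Cartier and numerically trivial, which it dispatches by combining Corollary \ref{cart-cor} (so $(Y,B_Y)$ is lc and $\Delta=B_Y$ is a candidate) with \cite{fujino-fujisawa-liu}*{Theorem 1.3} (so $M_Y\sim_{\mathbb Q}0$). Your two preparatory steps are correct and consistent with the paper's framework: the effectivity of $B_Y$ via condition $(6)$ is exactly Proposition \ref{dis-bound} (your argument that $b_P>1$ would force $\lceil -(B_X^{<1})\rceil\geq f^*P$ near the generic point of $P$, producing a section $1/f^*t_P$ violating $(6)$, is the standard proof from \cite{fl-plt}*{Lemma 5.1}), and the reduction of the conjecture to b-semi-ampleness of the moduli part via a Bertini argument on a high model is the expected strategy. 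You are also right that the remaining step is the b-semi-ampleness conjecture and is open; no complete proof should be expected here.

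However, one concrete claim in your proposal is wrong: you assert that when $\dim Y=2$ and $M_Y$ is $\mathbb Q$-Cartier, ``the abundance result \ldots proved in this paper upgrades the b-nef $M_Y$ to a semi-ample $\mathbb Q$-Cartier divisor on $Y$,'' and hence that Conjecture \ref{lc-conj} holds in that case. Theorem \ref{main-thm} does not say this. It asserts that $D=K_Y+B_Y+M_Y$ is semi-ample \emph{under the additional hypothesis that $D$ is nef}; it says nothing about semi-ampleness of $M_Y$ itself, and the conjecture neither assumes $D$ nef nor follows from semi-ampleness of $D$ (one needs an effective general member of $|mM_Y|$, not of $|mD|$, to keep $(Y,B_Y+N)$ log canonical). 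Semi-ampleness of the moduli part of a very basic slc-trivial fibration over a surface is not established in this paper; the only numerical input available is Theorem \ref{key thm}, which handles $M_Y$ restricted to unions of lc centers when it is numerically trivial there. So your surface case should be deleted or restricted to the numerically trivial situation the paper actually covers.
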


It is easy to see that 
Conjecture \ref{lc-conj} holds true  
in the case that  $M_Y$ is $\mathbb Q$-Cartier and numerically trivial 
by Corollary \ref{cart-cor} and \cite{fujino-fujisawa-liu}*{Theorem 1.3}.
In this paper, 
we study the numerically trivial moduli part further and prove the following useful theorem,
which is needed in the proof of Theorem \ref{main-thm}.
Note that the lc center of a very basic slc-trivial fibration is defined in Definition \ref{def-lc}.
Note also that the union of lc centers in our setting is not necessary to be normal.

\begin{thm}[Theorem \ref{key thm}]\label{thm1.6}
Let $f\colon (X, B_X) \to (Y,D)$ be a very basic slc-trivial fibration
with the structure decomposition $D=K_Y+B_Y+M_Y$.
Let $Z$ be a union of lc centers of $f$.
If  $M_Y$ is $\mathbb Q$-Cartier and $M_Y|_Z\equiv 0$, then $M_Y|_Z\sim_{\mathbb Q}0$.
\end{thm}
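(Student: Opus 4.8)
The plan is to reduce the statement on $Z$ to a statement on a single very basic slc-trivial fibration over a normal base, where a numerically trivial $\mathbb Q$-Cartier moduli part is known (or provable) to be $\mathbb Q$-linearly trivial, and then to glue the pieces together along the combinatorial structure of the lc centers. First I would recall that $Z$ is a union of lc centers of $f$; each lc center corresponds to a stratum $S$ of $X$ (an intersection of irreducible components of $\lceil B_X^{=1}\rceil$ together with components of $X$ itself), and the restriction $f|_S\colon (S, B_S)\to f(S)$ of the very basic slc-trivial fibration is again a very basic slc-trivial fibration onto its image by the adjunction-type results of \cite{fujino-slc-trivial} (divisorial and sub-adjunction for slc-trivial fibrations). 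The key numerical input $M_Y|_Z\equiv 0$ then restricts to give $M_{f(S)}\equiv 0$ on each such $f(S)$, after comparing moduli parts under the restriction, using that the moduli part is compatible with this adjunction up to the usual discrepancies and that $M_Y$ is $\mathbb Q$-Cartier (so it genuinely restricts).

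The second step is the normal, irreducible case: if $f\colon (X,B_X)\to (Y,D)$ is a very basic slc-trivial fibration with $Y$ normal and irreducible, $M_Y$ $\mathbb Q$-Cartier, and $M_Y\equiv 0$, then $M_Y\sim_{\mathbb Q}0$. Here I would invoke the b-semiampleness / local structure of the moduli part: after a birational modification of the base the moduli part descends to a b-semiample (or at least nef and abundant) $\mathbb Q$-divisor, and a $\mathbb Q$-Cartier numerically trivial semiample divisor is $\mathbb Q$-torsion; then one must track that pulling back and pushing forward along the birational maps does not destroy $\mathbb Q$-linear triviality, which is where $\mathbb Q$-Cartierness of $M_Y$ on the original $Y$ is used (so that $M_Y$ equals the pushforward of its birational descent with no correction term). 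This is essentially the content promised by the remark after Conjecture~\ref{lc-conj} that numerical triviality of a $\mathbb Q$-Cartier moduli part already yields log canonicity via \cite{fujino-fujisawa-liu}; I expect the precise statement to come out of that circle of ideas, possibly phrased through a torsion result for the moduli b-divisor.

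The third step is the gluing. Write $Z=\bigcup_i Z_i$ with the $Z_i$ the maximal lc centers. From step two I obtain $M_Y|_{Z_i}\sim_{\mathbb Q}0$ for each $i$, say $M_Y|_{Z_i}=\tfrac1{m}(\psi_i)$ for rational functions $\psi_i$ and a common denominator $m$. To patch these into a single rational function on $Z$ whose divisor realizes $mM_Y|_Z$, I would run a Mayer--Vietoris / \v Cech argument over the finite poset of lc centers and their intersections: on each overlap $Z_i\cap Z_j$ (again a union of lc centers, so again covered by steps one and two) the two restrictions $\psi_i$ and $\psi_j$ differ by a unit in $\mathcal O^*$, i.e.\ by something with trivial divisor, and the obstruction to choosing the $\psi_i$ compatibly lives in an $H^1$ of the nerve with coefficients in a sheaf of units; semi-log-canonical / seminormality of $Z$ (so that $\mathcal O_Z$ is the fiber product of the $\mathcal O_{Z_i}$ over the $\mathcal O_{Z_i\cap Z_j}$) makes this sheaf-theoretic patching legitimate.

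The main obstacle I anticipate is precisely this last gluing across the non-normal locus: a priori the $\psi_i$ need only agree up to nowhere-vanishing functions, and on a nodal-type configuration of lc centers one must verify that the relevant first cohomology obstruction vanishes — this is exactly the kind of place where the hypothesis that $f$ is \emph{very} basic (condition (6), $\mathcal O_Y\xrightarrow{\sim} f_*\mathcal O_X(\lceil -(B^{<1}_X)\rceil)$) rather than merely basic should be doing real work, by forcing global sections on $Z$ and its strata to behave as expected, so that no extra torsion or extension class survives. A secondary technical point is making the adjunction in step one numerically precise, i.e.\ checking that the moduli part of $f|_{Z_i}$ really is the restriction of $M_Y$ and not a perturbation of it; this should follow from the $\mathbb Q$-Cartier hypothesis together with the adjunction formula for slc-trivial fibrations in \cite{fujino-slc-trivial}, but it needs to be stated carefully because the discriminant part can jump under restriction.
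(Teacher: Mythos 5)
Your overall skeleton (restrict to the individual lc centers, prove $\mathbb Q$-torsion there, then glue) is the same as the paper's, but two of your three steps rest on tools that are not actually available, and the step you yourself flag as the main obstacle is exactly where the paper has to do something you have not supplied. First, in your normal irreducible case you invoke b-semiampleness (or nefness and abundance) of the moduli part; in this generality that is precisely Fujino's open b-semi-ampleness conjecture for basic slc-trivial fibrations, so it cannot be the engine of the proof. What the paper uses instead is Hodge-theoretic: after the unipotent reduction $\pi_i\colon W_i\to Z_i$, the moduli part is realized as an eigensheaf $\mathcal N_i$ which is a direct summand of the lowest piece of the Hodge filtration of a graded polarizable variation of mixed Hodge structure (Lemma \ref{mod-hodge}, via \cite{fujino-slc-trivial}*{Proposition 6.3}); numerical triviality of this nef summand forces it to be a flat local subsystem, and Deligne's theorem (\cite{deligne}*{Corollaire (4.2.8) (iii) b)}) then gives $\mathcal N_i^{\otimes r_i}=\mathcal O_{W_i}$. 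You allude to ``a torsion result for the moduli b-divisor'' in the \cite{fujino-fujisawa-liu} circle, which is the right source, but the mechanism you actually name does not exist yet.

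Second, and more seriously, your gluing step is an unresolved gap, not a proof: you correctly observe that the trivializations $\psi_i$ on the components $Z_i$ are a priori only compatible up to units, so that the obstruction lives in an $H^1$ of the nerve, and you express the hope that condition (6) kills it — but you give no argument, and for arbitrary choices of $\psi_i$ the obstruction has no reason to vanish. The paper's resolution is not a cohomological vanishing but a canonicity device: it fixes the rational function $\varphi$ with $K_X+B_X+\tfrac{1}{b}(\varphi)=f^*D$ once and for all, sets $\tau=f_*(\varphi)$, and works with the \emph{canonical moduli part} $\mathcal M^c_Y=\tau\otimes\mathcal O_Y(M_Y)$, a single globally defined object on $Y$. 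Lemma \ref{inh-lem} (which uses $M_Y|_{Z_i}\equiv 0$ together with $B_i\leq B_{Z_i}$ and nefness of $M_{Z_i}$ to force the comparison map to be the identity) shows that $\mathcal M^c_Y|_{Z_i}$ \emph{equals} the canonical moduli part of the induced fibration on $Z_i$, where equality means the defining Cartier data coincide, not merely that the sheaves are isomorphic. The Deligne argument then yields the identity $(\mathcal M^c_Y)^{\otimes r}|_{Z_i}=\mathcal O_{Z_i}$ as subsheaves of the rational functions, and these identities are restrictions of one global object, so they agree on all overlaps automatically and the patching is trivial. Without anchoring your trivializations to such a canonical global object, your \v{C}ech argument does not close. (A minor further point: one must first pass to a log resolution on which the preimage of $Z$ is simple normal crossing and descend the conclusion by the projection formula, using that $M_{Y'}+E=g^*M_Y$ with $E$ effective exceptional and $M_{Y'}|_{Z'}$ nef forces $E|_{Z'}=0$; your step one does not address how to return from the resolved situation to $Z$ itself.)
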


If we assume that the abundance conjecture for lc pairs holds true, then Conjecture \ref{lc-conj}
implies the following conjecture immediately.

\begin{conj}\label{ab-conj}
Let $f\colon (X, B_X) \to (Y,D)$ be a very basic slc-trivial fibration.
If $D$ is nef, then $D$ is semi-ample.
\end{conj}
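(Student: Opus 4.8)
The plan is to deduce the statement --- under the hypotheses actually studied in this paper, namely $\dim Y\le 2$ and $M_Y$ $\mathbb Q$-Cartier --- from Conjecture \ref{lc-conj} together with abundance for log canonical surfaces, which is unconditionally known. The cases $\dim Y\le 1$ are treated first and separately: over a point there is nothing to prove, and over a curve the moduli part $M_Y$ is semi-ample by the canonical bundle formula for basic slc-trivial fibrations over curves, while $K_Y+B_Y$ is semi-ample by abundance for log canonical curves, so $D=K_Y+B_Y+M_Y$ is semi-ample. Thus we may assume $\dim Y=2$.

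The core of the argument is to establish Conjecture \ref{lc-conj} in this setting, i.e.\ to produce an effective $\Delta\sim_{\mathbb Q}B_Y+M_Y$ with $(Y,\Delta)$ log canonical. First I would pass to a sufficiently high birational model $\sigma\colon Y'\to Y$ on which the moduli b-divisor descends; by the Hodge-theoretic results underlying \cite{fujino-slc-trivial} the moduli part $M_{Y'}$ is then nef and $\mathbb Q$-Cartier, so $(Y',B_{Y'},M_{Y'})$ is a generalized log canonical pair on a smooth surface with $K_{Y'}+B_{Y'}+M_{Y'}=\sigma^{*}D$. On such a surface one finds an effective $\Delta'\sim_{\mathbb Q}B_{Y'}+M_{Y'}$ with $(Y',\Delta')$ log canonical by a generalized-pair argument in dimension two: Theorem \ref{thm1.6} upgrades numerical triviality of the moduli part along the lc centers of the pair to $\mathbb Q$-linear triviality, \cite{fujino-fujisawa-liu}*{Theorem 1.3} then produces an effective log canonical boundary when $M_{Y'}\equiv 0$, and generalized non-vanishing together with complement theory on surfaces handles the case $M_{Y'}\not\equiv 0$. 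Pushing $\Delta'$ forward and using that $K_{Y'}+\Delta'\sim_{\mathbb Q}\sigma^{*}(K_Y+\sigma_{*}\Delta')$ has the same discrepancies upstairs, one obtains $\Delta:=\sigma_{*}\Delta'$ as required. Granting Conjecture \ref{lc-conj}, $D\sim_{\mathbb Q}K_Y+\Delta$ with $(Y,\Delta)$ log canonical and $\dim Y=2$, and since $D$ is nef, abundance for log canonical surfaces shows that $D$ is semi-ample.

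I expect the main obstacle to be the middle step, the construction of the effective log canonical representative $\Delta'$ of $B_{Y'}+M_{Y'}$. Two ingredients are delicate: the descent of the moduli part to a nef $\mathbb Q$-Cartier divisor on a birational model, which rests on the theory of variations of mixed Hodge structure and is exactly where the $\mathbb Q$-Cartier hypothesis enters; and the passage from numerical to $\mathbb Q$-linear triviality of the moduli part along the lc centers, for which Theorem \ref{thm1.6} is tailored, but whose combination with non-vanishing and complement arguments on surfaces to treat $M_{Y'}\not\equiv 0$ requires care. As an alternative route that avoids Conjecture \ref{lc-conj}, one can argue directly by the numerical dimension $\nu(D)\in\{0,1,2\}$: for $\nu(D)=2$ this is the base-point-free theorem for a big nef divisor on a surface; for $\nu(D)=1$ it follows once $\kappa(Y,D)\ge 1$, since a nef divisor on a surface with $\kappa=\nu=1$ is semi-ample; and for $\nu(D)=0$ it reduces, once $\kappa(Y,D)\ge 0$, to the fact that an effective divisor numerically equivalent to zero on a projective surface vanishes. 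This route still requires the non-vanishing $\kappa(Y,D)\ge 0$, which again brings in Theorem \ref{thm1.6} and the structure of $D$ as the moduli-type divisor of the fibration.
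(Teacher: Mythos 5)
You were asked about Conjecture \ref{ab-conj}, which the paper does not prove in general; it only establishes the cases $\dim Y=1$ (via \cite{fujino-fujisawa-liu}) and $\dim Y=2$ with $K_Y+B_Y$, equivalently $M_Y$, $\mathbb Q$-Cartier (Theorem \ref{main-thm}). Restricting to those cases is the right target, but your surface argument has a genuine gap at its central step. Your first route rests on proving Conjecture \ref{lc-conj} in dimension two, and there you dispose of the case $M_{Y'}\not\equiv 0$ by invoking ``generalized non-vanishing together with complement theory on surfaces''. No such statement is available at the generality you use it: for an abstract generalized lc surface pair whose nef part is only known to be nef and $\mathbb Q$-Cartier, $\mathbb Q$-linear (as opposed to numerical) effectivity of $B_{Y'}+M_{Y'}$ can fail outright --- take $B_{Y'}=0$ and $M_{Y'}$ a non-torsion numerically trivial divisor on an abelian or K3 surface --- so any proof must exploit the Hodge-theoretic origin of the moduli part, not just its nefness. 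This is exactly the difficulty the paper is organized to avoid: as explained in \ref{2.4}, Theorem \ref{main-thm} is proved case by case in the nef dimension $n(D)$ rather than $\kappa(D)$, and the hard case $n(D)=2$ with $D$ not big is eliminated by a contradiction using Sakai's and Fujita's classification (degenerate del Pezzo and irrational ruled surfaces, \cite{sakai}, \cite{sakai2}, \cite{fujita}) together with Theorem \ref{key thm}, rather than by producing an effective lc representative of $B_Y+M_Y$. (Your curve case is also off --- $K_Y+B_Y$ need not be nef, so abundance for lc curves does not apply to it --- though that case is covered by \cite{fujino-fujisawa-liu}*{Corollary 1.4} in any event.)

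Your alternative route by numerical dimension has the same lacuna plus a concrete error. For $\nu(D)=2$ you appeal to ``the base-point-free theorem for a big nef divisor on a surface'', but a big nef divisor on a surface need not be semi-ample (Zariski's example); one needs the log structure, namely that $2D-(K_Y+B_Y)=D+M_Y$ is nef and big together with a base-point-free statement for lc/qlc-type pairs, which is how the paper argues via \cite{liu} and Corollary \ref{dlt-cor}. For $\nu(D)\leq 1$ you acknowledge that the required non-vanishing $\kappa(Y,D)\geq 0$ ``brings in Theorem \ref{thm1.6} and the structure of $D$'', but you never supply that argument; in the paper this is precisely the content of Case \ref{case1} (MMP analysis plus \cite{fujino-fujisawa-liu}*{Theorem 1.3}) and Case \ref{case2} (restriction to general fibers of the reduction map, Theorem \ref{key thm}, Mori's canonical bundle formula, then \cite{fujita}). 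So while you correctly identify the relevant tools, the step you leave unproved --- effectivity/non-vanishing when the moduli part is not numerically trivial --- is exactly the actual content of Section \ref{sec5}.
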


Conjecture \ref{ab-conj} can be 
viewed as a kind of abundance conjectures for very basic slc-trivial fibrations.
It holds true 
in the case that $\dim Y=1$ by using \cite{fujino-fujisawa-liu}*{Corollary 1.4} directly
(cf. \cite{fl-plt}*{Corollary 5.4} or \cite{liu}*{Corollary 3.2}).
We show that it also holds true when $\dim Y=2$ and
$M_Y$ is $\mathbb Q$-Cartier, equivalently, $K_Y+B_Y$ is $\mathbb Q$-Cartier. 
More precisely,

\begin{thm}\label{main-thm}
Let $f\colon (X, B_X) \to (Y,D)$ be a very basic slc-trivial fibration
with the structure decomposition $D=K_Y+B_Y+M_Y$, where
 $Y$ is a normal surface. 
If $K_Y+B_Y$ is $\mathbb Q$-Cartier and $D$ is nef, then $D$ is semi-ample.
\end{thm}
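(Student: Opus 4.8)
The plan is to argue by the numerical dimension $\nu(D)\in\{0,1,2\}$ of the nef $\mathbb Q$-Cartier divisor $D$ on the normal projective surface $Y$. Throughout we use that the moduli part $M_Y$ is nef (the semi-positivity theorem for slc-trivial fibrations), that $M_Y$ and $K_Y+B_Y$ are $\mathbb Q$-Cartier so that Theorem \ref{thm1.6} is available on any union of lc centers of $f$, and that the case $\dim Y=1$ is already known (\cite{fujino-fujisawa-liu}*{Corollary 1.4}). The organising remark is that once $M_Y$ is shown to be numerically trivial we are done: then $M_Y\sim_{\mathbb Q}0$ by Theorem \ref{thm1.6} applied with $Z=Y$ (the maximal lc center of $f$), Conjecture \ref{lc-conj} holds by Corollary \ref{cart-cor} and \cite{fujino-fujisawa-liu}*{Theorem 1.3}, so $D\sim_{\mathbb Q}K_Y+\Delta$ for some log canonical pair $(Y,\Delta)$, and log abundance for lc surfaces makes the nef $\mathbb Q$-divisor $K_Y+\Delta$ semi-ample. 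Thus in each case the task is to reduce to $M_Y\equiv 0$.

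Suppose first $\nu(D)=2$, so $D$ is nef and big. The curves $C$ with $D\cdot C=0$ form a negative-definite configuration, and by a standard base-point-free argument for big and nef divisors on surfaces it suffices to prove $D|_C\sim_{\mathbb Q}0$ for every such $C$. If $C\cong\mathbb P^1$ this is automatic. Otherwise $C$ has positive arithmetic genus and $\deg(D|_C)=0$; write $D|_C=(K_Y+B_Y)|_C+M_Y|_C$, two numerically trivial $\mathbb Q$-divisors on $C$. If $C$ is contained in a union of lc centers of $f$, then $M_Y|_C\sim_{\mathbb Q}0$ by Theorem \ref{thm1.6}, while $(K_Y+B_Y)|_C\sim_{\mathbb Q}0$ follows by adjunction together with abundance in dimension one. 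If $C$ is not contained in any lc center, then restricting $f$ over $C$ — after the base change and resolution needed to put it in standard form — yields a slc-trivial fibration over the curve $C$ whose structure divisor is $\mathbb Q$-linearly equivalent to $D|_C$, and the $\dim=1$ case gives $D|_C\sim_{\mathbb Q}0$ directly.

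Suppose next $\nu(D)=1$, so $D^2=0$ and $D\not\equiv 0$. A Riemann--Roch estimate shows $D$ is $\mathbb Q$-effective: if $\kappa(D)=-\infty$ then $D\cdot K_Y\ge 0$, but $D\cdot K_Y=-D\cdot B_Y-D\cdot M_Y\le 0$ since $B_Y\ge 0$ and $M_Y$ is nef, forcing $D\cdot M_Y=0$; by the Hodge index theorem $M_Y\equiv cD$ with $c\ge 0$, and the subcase $c=0$ contradicts log non-vanishing for the pair $(Y,\Delta)$ of the organising remark, while $c>0$ is excluded likewise after passing to $K_Y+B_Y\equiv(1-c)D$. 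Hence $\kappa(D)\ge 0$; the support of an effective representative lies in the null locus of $D$, a finite union of curves, and one concludes as in the big case. Finally, if $\nu(D)=0$, then $D\equiv 0$ and $-K_Y\equiv B_Y+M_Y$ is pseudo-effective; if it is big then $Y$ is a rational surface, $\operatorname{Pic}(Y)\otimes\mathbb Q$ is torsion-free, and $D\equiv 0$ already gives $D\sim_{\mathbb Q}0$; otherwise $\nu(M_Y)\le 1$, the case $\nu(M_Y)=1$ reduces to the fibration/null-locus analysis above, and the case $M_Y\equiv 0$ is the organising remark.

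The main obstacle is the big case for the non-rational curves $C$ in the null locus that are not lc centers of $f$: here Theorem \ref{thm1.6} says nothing, and one genuinely needs the geometry of the fibration — concretely, one must restrict the very basic slc-trivial fibration $f$ over such a curve $C$, checking that the discriminant and moduli parts restrict correctly under the base change and resolution required, which is not formal, in order to bring the known $\dim Y=1$ result to bear and deduce $D|_C\sim_{\mathbb Q}0$. A recurring secondary difficulty is that $(Y,B_Y)$ need not be log canonical, so every appeal to abundance or non-vanishing for surfaces must be routed through the lc pair $(Y,\Delta)$ furnished by Conjecture \ref{lc-conj} — available only after $M_Y$ is known to be numerically trivial — or through adjunction to an lc center.
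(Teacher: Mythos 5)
Your case division is by the numerical dimension $\nu(D)$, whereas the paper deliberately stratifies by the nef dimension $n(D)$ precisely ``to avoid proving the non-vanishing theorem for the moduli part directly'' (see \ref{2.4}). This is not a cosmetic difference: it is where your argument breaks. When $\nu(D)=1$, i.e.\ $D^2=0$ and $D\not\equiv 0$, semi-ampleness forces $\kappa(D)=1$, so you must \emph{exclude} the possibility $\kappa(D)\leq 0$; you cannot absorb it. Your Riemann--Roch step only yields $\kappa(D)\geq 1$ when $D\cdot K_Y<0$; in the remaining case $K_Y^2=K_Y\cdot D=D^2=0$ your conclusion ``the support of an effective representative lies in the null locus of $D$, a finite union of curves, and one concludes as in the big case'' is a non sequitur: for a non-big nef divisor the curves with $D\cdot C=0$ do not form a negative-definite, contractible configuration (they sweep out $Y$ when $n(D)=1$, and when $n(D)=2$ one has $\kappa(D)=0$ by \cite{ambro-nef}*{Proposition 6.1}, so the effective representative is rigid and the ``big case'' machinery gives nothing). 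This last configuration --- $D^2=0$, $n(D)=2$, $\kappa(D)=0$ --- is exactly the hard case of the paper, which is ruled out by Sakai's classification ($Y$ a degenerate del Pezzo surface or an irrational ruled surface of type $II_c$, $II_c^*$, with $D$, $B_Y$, $M_Y$ all proportional to an indecomposable curve $T$ of canonical type), a run of the MMP to make a component of $T$ an lc center, Theorem \ref{key thm} forcing $T^*|_{T^*}$ to be torsion, and the contradiction with \cite{sakai}*{Proposition 5} that $T|_T$ is \emph{not} torsion. None of this is replaceable by the null-locus argument you propose, and your subcases ``$c=0$ contradicts log non-vanishing'' and ``$c>0$ is excluded likewise'' are not arguments.

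Two further points. First, your organising remark applies Theorem \ref{thm1.6} with $Z=Y$, but Definition \ref{def-lc} requires an lc center of $f$ to satisfy $Z\subsetneq Y$; the statement you need ($M_Y\equiv 0$ on all of $Y$ implies $M_Y\sim_{\mathbb Q}0$) is instead \cite{fujino-fujisawa-liu}*{Theorem 1.3}, after which the route through Conjecture \ref{lc-conj} and Fujita's log abundance for lc surface pairs is legitimate (the paper's Case \ref{case1} runs an MMP instead, but reaches the same point). Second, in your big case you propose restricting $f$ over a non-rational curve $C$ in the null locus that is not an lc center; the restriction $f^{-1}(C)\to C$ is \emph{not} a basic slc-trivial fibration with structure divisor $D|_C$, because adjunction for $(X,B_X)$ along $f^*C$ requires $f^*C$ to sit in $B_X$ with coefficient one. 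The paper's device of passing to $f\colon (X,B_X+f^*F)\to (Y,D+F)$ works only for a general fiber $F$ of the reduction map, where $\Supp B_Y\cap F=\emptyset$ and the new pair stays slc; it does not apply to an arbitrary curve in the null locus. (For what it is worth, the genuinely big case needs none of this: it is covered by a dlt blow-up and \cite{liu}*{Corollary 3.5}.)
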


The assumption that $K_Y+B_Y$ is $\mathbb Q$-Cartier is very natural 
in practice. Actually, we can even assume that $Y$ is $\mathbb Q$-factorial
if we only use Theorem \ref{main-thm} to prove Theorem \ref{main-cor}.
It is also worth to mention that we only need 
Theorem \ref{main-thm} for very basic klt-trivial fibrations 
in the proof of Theorem \ref{main-cor}.
See Section \ref{sec6} for the details.

\begin{ack}
The author is very grateful to Osamu Fujino, Jingjun Han, Chen Jiang and Zhiyu Tian for useful 
discussions and suggestions. He would like to thank Jingjun Han for his private note on 
the termination of the LMMP of generalized lc pairs in dimension 3.
 He would also like to thank Zhiyu Tian 
for bringing \cite{gglr} to his attention, and Colleen Robles for answering some naive questions
on  \cite{gglr}.
\end{ack}


\section{Preliminaries}\label{sec2}

\begin{say}[\textbf{Notation and conventions}]\label{2.1}
We work over the complex number field $\mathbb C$ throughout 
this paper. We freely use the basic 
notation of the minimal model program in 
\cites{fujino-fundamental, fujino-surfaces, fujino-foundations}. 
In this paper, we consider only $\mathbb Q$-divisors instead of $\mathbb R$-divisors. 
A {\em scheme} is always assumed to be separated and of finite type over $\mathbb{C}$.
 A {\em variety} is a reduced and irreducible scheme. 
A {\em curve} (resp. {\em surface}) is a variety of dimension 1 (resp. 2).

Let $D$ be a $\mathbb Q$-divisor on an equidimensional scheme $X$, that is, 
$D$ is a finite formal sum $\sum _i d_i D_i$ where 
$d_i\in \mathbb{Q}$ and $\{D_i\}_i$ are distinct prime divisors 
on $X$. 
We put 
$$
D^{<1}=\sum _{d_i<1}d_iD_i, \quad 
D^{\leq 1}=\sum _{d_i\leq 1}d_i D_i, \quad 
\text{and} \quad D^{=1}=\sum _{d_i=1}D_i. 
$$
We also put 
$$
\lceil D\rceil =\sum _i \lceil d_i \rceil D_i, \quad 
\lfloor D\rfloor=-\lceil -D\rceil, \quad \text{and} 
\quad 
\{D\}=D-\lfloor D\rfloor, 
$$
where $\lceil d_i\rceil$ is the integer defined by 
$d_i\leq \lceil d_i\rceil <d_i+1$. A $\mathbb Q$-divisor 
$D$ is called a {\em{sub boundary}} if $D=D^{\leq 1}$ holds.
if a sub boundary $D$ is also effective ($d_i\geq 0$ for every $i$), then $D$ is called  a {\em{boundary}}.
Let $D_1$ and $D_2$ be two $\mathbb Q$-divisors on $X$. 
We denote $D_1\leq D_2$ (resp. $D_1< D_2$) if $D_2-D_1$ is effective (resp. strictly effective).
Assume that $f\colon X\to Y$ is a surjective morphism onto a normal variety $Y$.
Then 
$$
D^v=\sum_{f(D_i)\subsetneq Y} d_iD_i \quad 
\text{and} \quad D^h=D-D^v
$$
are called the {{\em{vertical part}} and \em{horizontal part}} of $D$ with respect to $f\colon X\to Y$
respectively.

A $\mathbb Q$-Cartier divisor $D$ on $X$ is an element of 
$\Gamma(X, \mathcal K_X^*/\mathcal O_X^*)\otimes \mathbb Q$.
Let $D_1$ and $D_2$ be two $\mathbb Q$-Cartier divisors 
on $X$. Then 
we write $D_1\sim _{\mathbb Q} D_2$ if there exists a positive integer $m$ 
such that $mD_1\sim mD_2$, that is, 
$mD_1$ is linearly equivalent to $mD_2$. 
Let $f\colon X\to Y$ be a surjective morphism onto a normal variety $Y$.
Then we write $D_1\sim _{\mathbb Q, f}D_2$ if there exists a $\mathbb Q$-Cartier 
divisor $B$ on $Y$ such that $D_1-D_2\sim _{\mathbb Q}f^*B$. 
If $P$ is a prime divisor on $Y$, then its generic point is Cartier.
We denote $f^*P_{\eta}$ the closure of the pullback of the generic point of $P$.
If $P=\sum a_iP_i$ is a $\mathbb Q$-divisor, 
then $f^*P_{\eta}:=\sum a_i f^*P_{i,\eta}$.
\end{say}

\begin{say}[\textbf{Singularities of pairs}]\label{2.2}
A {\em pair} $(X, \Delta)$ consists of a normal 
variety $X$ and a $\mathbb Q$-divisor 
$\Delta$ on $X$ such that $K_X+\Delta$ is $\mathbb Q$-Cartier. 
Let $f\colon Y\to X$ be a projective 
birational morphism from a normal variety $Y$. 
Then we can write 
$$
K_Y=f^*(K_X+\Delta)+\sum _E a(E, X, \Delta)E
$$ 
with 
$$f_*\left(\underset{E}\sum a(E, X, \Delta)E\right)=-\Delta, 
$$ 
where $E$ runs over prime divisors on $Y$. 
We call $a(E, X, \Delta)$ the {\em{discrepancy}} of $E$ with 
respect to $(X, \Delta)$. 
Note that we can define the discrepancy $a(E, X, \Delta)$ for 
any prime divisor $E$ over $X$ by taking a suitable 
resolution of singularities of $X$. 
If $a(E, X, \Delta)\geq -1$ (resp. $ a(E, X, \Delta)>-1$) for 
every prime divisor $E$ over $X$, 
then $(X, \Delta)$ is called {\em{sub log canonical}} ({\em{sub-lc}} for short) 
or {\em{sub Kawamata log terminal}} ({\em{sub-klt}} for short) respectively.
If $a(E, X, \Delta)>-1$ for every exceptional divisor $E$ over 
$X$, then $(X, \Delta)$ is called {\em{sub purely log terminal}} ({\em{sub-plt}} for short).  

If $\Delta$ is effective, then a pair $(X, \Delta)$ is called 
{\em{lc}} (resp. {\em{klt, plt}}) if it is sub-lc (resp. sub-klt, sub-plt). 
A {\em{divisorial log terminal}} ({\em{dlt}} for short)
pair is a limit of klt pairs in the sense of \cite{kollar-mori}*{Proposition 2.43}
(see \cite{kollar-mori}*{Definition 2.37 and Proposition 2.40} for precise definitions).

Let $(X, \Delta)$ be a sub-lc pair. 
If there exists a projective birational morphism 
$f\colon Y\to X$ from a normal variety $Y$ and a prime divisor $E$ on $Y$ 
with $a(E, X, \Delta)=-1$, then $f(E)$ is called an {\em{lc center}} of 
$(X, \Delta)$.

Let $X$ be a reduced equidimensional scheme which satisfies Serre's $S_2$ condition and 
is normal crossing in codimension one. 
Let $\Delta$ be an effective $\mathbb Q$-divisor 
on $X$ such that no irreducible component of $\Supp \Delta$ is 
contained in the singular locus of $X$ and $K_X+\Delta$ is $\mathbb Q$-Cartier. 
We say that $(X, \Delta)$ is a {\em{sub semi log canonical}} 
({\em{sub-slc}} for short) pair if 
$(X^\nu, \Delta_{X^\nu})$ is sub log canonical, 
where $\nu\colon X^\nu\to X$ is the normalization of 
$X$ and $K_{X^\nu}+\Delta_{X^\nu}=\nu^*(K_X+\Delta)$, 
that is, $\Delta_{X^\nu}$ is the sum of the inverse image of $\Delta$ 
and the conductor of $X$. 
We say that  $(X, \Delta)$ is {\em{semi log canonical}}
({\em{slc}} for short) if $\Delta$ is effective.

An {\em{slc center}} of a sub-slc pair $(X, \Delta)$ is the $\nu$-image 
of an lc center of $(X^\nu, \Delta_{X^\nu})$. 
An {\em{slc stratum}} of $(X, \Delta)$ means 
either an slc center of $(X, \Delta)$ or an irreducible component of 
$X$. 
For more details of semi log canonical pairs, see \cites{fujino-slc, kollar92, kollar}. 
\end{say}

\begin{say}[\textbf{Kodaira dimension}]\label{2.3}
Let $D$ be a $\mathbb Q$-Cartier $\mathbb Q$-divisor 
on a normal projective variety $X$. Let $m_0$ be a positive integer 
such that $m_0D$ is a Cartier divisor. 
Let $$\Phi_{|mm_0D|}\colon X\dashrightarrow \mathbb P^{\dim\!|mm_0D|}$$ be 
the rational map given by the complete linear system $|mm_0D|$ 
for a positive integer $m$. 
Note that $\Phi_{|mm_0D|}(X)$ denotes the 
closure of the image of the rational map $\Phi_{|mm_0D|}$. 
We put 
$$\kappa (X, D):=\max_m \dim \Phi_{|mm_0D|}(X)$$ if 
$|mm_0D|\ne \emptyset$ for some $m$ and 
$\kappa (X, D):=-\infty$ otherwise. 
We call $\kappa(X, D)$ ($\kappa(D)$ for short if there is no danger of confusion)
the {\em{Iitaka dimension}} 
of $D$, and  $\kappa(X, K_X+\Delta)$  the {\em{log Kodaira dimension}} of 
$(X,\Delta)$ when $(X,\Delta)$ is a projective lc pair.
We simply call it {\em{Kodaira dimension}} for short in this paper.

We say that $D$ is {\em{big}} if $\kappa(X, D)=\dim X$,
and that a projective lc pair $(X,\Delta)$ is {\em{big}} if $\kappa(X, K_X+\Delta)=\dim X$.
We say that $D$ is {\em{nef}}
if $D\cdot C\geq 0$ for every curve $C$ on $X$.
For a general $\mathbb Q$-divisor $D$,
instead of using {\em{b-divisor}} (cf. \cite{fujino-slc-trivial}*{(2.10)}, 
\cite{corti}*{2.3.2}),
we say that $D$ is nef if there exists a projective birational morphism 
$f\colon Y\to X$ from a normal variety $Y$ and a nef
$\mathbb Q$-Cartier $\mathbb Q$-divisor $B$ on $Y$
such that $D=f_*B$.
\end{say}

\begin{say}[\textbf{Nef dimension}]\label{2.4}
Let $D$ be a nef $\mathbb Q$-Cartier $\mathbb Q$-divisor on a normal projective variety $X$.
By \cite{b8}*{Theorem 2.1} (see also \cite{fujino-surfaces}*{2.6} and \cite{ambro-nef}*{Section 2}), 
there exists an almost holomorphic, dominant rational
map $f\colon X\dasharrow Y$ with connected fibers, called a {\em{reduction map}} associated to $D$ such that
\begin{itemize}
\item[(i)] $D$ is numerically trivial on all compact fibers $F$ of $f$ with $\dim F=\dim X- \dim Y$,
\item[(ii)] if $x\in X$ is a very general point and  $C$ is an irreducible curve through
$x$ with $\dim f(C)>0$, then $D\cdot C>0$.
\end{itemize} 
The map $f$ is unique up to birational equivalence of $Y$. 
We call $n(X,D):=\dim Y$ the {\em{nef dimension}} of $D$
($n(D)$ for short if there is no danger of confusion).
It is well-known that $n(X,D)\geq 0$ and $\kappa(X, D)\leq n(X,D)\leq \dim X$.

In Section \ref{sec5}, we will prove Theorem \ref{main-thm} case by case according to 
the nef dimension $n(D)$ instead of the Iitaka dimension  $\kappa (D)$.
The advantage is to avoid proving the non-vanishing theorem for the moduli part directly.
\end{say}


\section{On very basic slc-trivial fibrations}\label{sec3}
Throughout this section, we fix the notation that 
$f\colon (X, B_X) \to (Y,D)$ is a very basic slc-trivial fibration
with the structure decomposition $D=K_Y+B_Y+M_Y$.
Two sheaves are isomorphic in codimension $k$ means they are
isomorphic outside a subset of codimension $k+1$.

First, we show some
properties about the discriminant part $B_Y$ and the moduli part $M_Y$.
The proof of the following proposition is the same as that of \cite{fl-plt}*{Lemma 5.1}.

\begin{prop}\label{dis-bound}
$B_Y$ is a boundary.
\end{prop}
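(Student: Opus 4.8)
The goal is to show that $B_Y = \sum_P (1-b_P)P$ is a boundary, i.e.\ each coefficient $1-b_P$ satisfies $0 \le 1-b_P \le 1$, equivalently $0 \le b_P \le 1$. The plan is to analyze the local definition of $b_P$ at the generic point of a prime divisor $P$ on $Y$, reducing everything to a statement about the sub-slc pair $(X, B_X)$ over the generic point $\eta_P$.

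First I would observe that $b_P < \infty$, so that $b_P$ is a well-defined rational number: since $(X, B_X)$ is sub-slc over $Y$ and no stratum of $X$ is mapped into $P$ (every stratum dominates $Y$ by condition (2) of Definition \ref{slc-def}), adding a large multiple of $f^*P$ eventually violates the sub-slc condition along the components of $f^*P$, because those components are vertical divisors not contained in the singular locus and their coefficients in $B_X + t f^*P$ grow with $t$. This also shows the maximum defining $b_P$ is attained. Next, for the upper bound $b_P \le 1$: at the generic point of $P$, work on the normalization $\nu\colon X^\nu \to X$ and note that the horizontal-vs-vertical decomposition is compatible with pulling back $f^*P$; the coefficient of any component $E$ of $(f^*P)^{\mathrm{red}}$ in $\nu^*(B_X + t f^*P)$ is $\mathrm{coeff}_E(B_{X^\nu}) + t\cdot \mathrm{mult}_E f^*P \ge \mathrm{coeff}_E(B_{X^\nu}) + t$ (since $\mathrm{mult}_E f^*P \ge 1$), and since $(X^\nu, B_{X^\nu})$ is sub-lc over $\eta_P$ we need this to be $\le 1$, forcing $t \le 1 - \mathrm{coeff}_E(B_{X^\nu}) \le 1$; taking the max over $t$ gives $b_P \le 1$. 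For the lower bound $b_P \ge 0$: this amounts to saying $(X, B_X)$ is itself sub-slc over $\eta_P$, which holds because $(X, B_X)$ is assumed sub-slc over all of $Y$ (condition (3)), so $t = 0$ is always admissible and $b_P \ge 0$.

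The step I expect to be the main obstacle — or at least the one requiring care — is the upper bound argument, specifically handling the case where $b_P$ is computed not by a component of $f^*P$ already appearing in $\mathrm{Supp}\, B_X$ but by a divisorial valuation $E$ over $X^\nu$ whose center lies over $\eta_P$; one must pass to a log resolution of $(X^\nu, B_{X^\nu} + (f\circ\nu)^*P)$ and track how discrepancies degrade as $t$ increases, using that the log discrepancy $a(E, X^\nu, B_{X^\nu}) + 1 > 0$ and that the order of vanishing of $f^*P$ along $E$ is a positive integer. Since $(X, B_X)$ is simple normal crossing, however, $X^\nu$ is smooth and we may reduce to the case $E$ is a component of a simple normal crossing divisor after finitely many blow-ups, making the discrepancy bookkeeping explicit and routine. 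The argument is essentially identical to that of \cite{fl-plt}*{Lemma 5.1}, so once the reduction to the generic point of $P$ and to $X^\nu$ is in place, the remaining computation is standard.
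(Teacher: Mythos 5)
Your argument for the effectiveness of $B_Y$ (the bound $b_P\le 1$) has a genuine gap. The chain $t\le 1-\mathrm{coeff}_E(B_{X^\nu})\le 1$ silently assumes $\mathrm{coeff}_E(B_{X^\nu})\ge 0$; but $B_X$ is only a sub boundary, so its vertical components over $P$ may have negative coefficients, in which case $1-\mathrm{coeff}_E(B_{X^\nu})>1$ and the bound collapses. In fact $b_P\le 1$ is simply false for basic slc-trivial fibrations in general: take $f\colon X=Y\times E\to Y$ a product with an elliptic curve $E$ and $B_X=-f^*P$; then $(X,B_X)$ is a sub-klt snc pair with $K_X+B_X=f^*(K_Y-P)$, every stratum dominates $Y$, $\rank f_*\mathcal O_X(\lceil -(B_X^{<1})\rceil)=\rank \mathcal O_Y(P)=1$, yet $b_P=2$ and $\mult_PB_Y=-1$. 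What your proof never uses is condition $(6)$ of Definition \ref{slc-def}, i.e.\ $\mathcal O_Y\simeq f_*\mathcal O_X(\lceil -(B_X^{<1})\rceil)$ (which fails in the example above, where the pushforward is $\mathcal O_Y(P)$). The paper states explicitly that the effectiveness of $B_Y$ follows from condition $(6)$ and that this is one of the reasons to consider \emph{very} basic fibrations; compare Lemma \ref{res-lem}, where the induced $B_{Y'}$ on a higher model is in general only a sub boundary precisely because $(6)$ need not survive the base change.

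The intended argument, from \cite{fl-plt}*{Lemma 5.1}, is the same ceiling computation as \eqref{eq3.2}: suppose $b_P>1$, and let $Q$ be any prime divisor in $\Supp f^*P_{\eta}$, with $a_Q=\mult_QB_X$ and $b_Q=\mult_Qf^*P_{\eta}\in\mathbb Z_{>0}$. Sub-slc-ness of $(X,B_X+b_Pf^*P)$ over the generic point of $P$ gives $a_Q+b_Pb_Q\le 1$, hence $a_Q\le 1-b_Pb_Q<1-b_Q\le 0$, so $-a_Q>b_Q-1$ and $\lceil -a_Q\rceil\ge b_Q$. Therefore $\lceil -(B_X^{<1})\rceil\ge f^*P_{\eta}$ over the generic point of $P$, and $f_*\mathcal O_X(\lceil -(B_X^{<1})\rceil)$ contains $\mathcal O_Y(P)\supsetneq\mathcal O_Y$ at $\eta_P$, contradicting $(6)$. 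Your treatment of the sub boundary direction ($b_P\ge 0$, giving $\mult_PB_Y\le 1$) and of the finiteness of $b_P$ is fine; it is the effectiveness half, which is the actual point of the proposition, that needs condition $(6)$ rather than the discrepancy bookkeeping you propose.
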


The effectiveness of $B_Y$ follows from the condition $(6)$ of Definition \ref{slc-def}.
This property is one of the reasons to consider very basic slc-trivial fibrations.
Moreover, it is easy to check that in this case,
the pair $(Y, B_Y+M_Y)$ is a {\em{generalized lc pair}} in the sense of 
\cite{bz}*{Definition 4.1}. Combining Remark \ref{rem1.3}, we see that 
very basic slc-trivial fibrations
sit between normal qlc pairs and generalized lc pairs.

\begin{cor}\label{cart-cor}
If $K_Y+B_Y$ is $\mathbb Q$-Cartier, then $(Y, B_Y)$ is log canonical.
\end{cor}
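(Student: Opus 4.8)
The plan is to deduce this from Proposition \ref{dis-bound} together with the structure decomposition $D = K_Y + B_Y + M_Y$, via a comparison of discrepancies between $(Y, B_Y)$ and the sub-slc pair $(X, B_X)$ upstairs. Since $B_Y$ is a boundary by Proposition \ref{dis-bound}, and $K_Y + B_Y$ is assumed $\mathbb Q$-Cartier, the pair $(Y, B_Y)$ is a genuine pair in the sense of \ref{2.2}, so it makes sense to ask whether it is log canonical; the content is precisely the inequality $a(P, Y, B_Y) \geq -1$ for every prime divisor $P$ over $Y$.

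First I would reduce to checking the discrepancy condition at divisors $P$ on $Y$ itself (not just over $Y$). The standard move is to pass to a log resolution: take a projective birational morphism $g\colon Y' \to Y$ and a corresponding modification $X' \to X$ of the fibration so that $f'\colon (X', B_{X'}) \to (Y', D')$ is again a very basic slc-trivial fibration (this kind of birational base change is part of the formalism in \cite{fujino-slc-trivial} and is used implicitly in \cite{fl-plt}). The discriminant part is insensitive to this in the sense that $B_{Y'}$ is again a boundary by Proposition \ref{dis-bound} applied to $f'$, and by the very definition of $b_P$ one has $g_* B_{Y'} = B_Y$ and, more importantly, $K_{Y'} + B_{Y'} = g^*(K_Y + B_Y) + (\text{moduli correction})$; the key point is that the coefficient of any $g$-exceptional prime divisor $P$ in $B_{Y'}$ is $1 - b_P \leq 1$, which is exactly the statement that $a(P, Y, B_Y) = b_P - 1 \geq -1$ once we know the moduli part does not interfere. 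So the crux is the compatibility of the construction of $B_Y$ with the formula $D = K_Y + B_Y + M_Y$ under base change, namely that $M_{Y'} = g^* M_Y$ (or at least that the moduli part is a $g$-pullback up to the relevant codimension), which is one of the foundational properties of the discriminant/moduli decomposition recorded in \cite{fujino-slc-trivial}.

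The main obstacle I anticipate is precisely ensuring that the moduli part behaves correctly under the birational base change — i.e. that after replacing $f$ by $f'$ the equality $D' = g^*D$ propagates to $M_{Y'} = g^* M_Y$ and $K_{Y'} + B_{Y'} = g^*(K_Y + B_Y)$, since a priori $K_{Y'} + B_{Y'}$ need only be $\mathbb Q$-Cartier on a further resolution and the moduli part is only defined as a $\mathbb Q$-divisor (not obviously $\mathbb Q$-Cartier). Once the hypothesis "$K_Y + B_Y$ is $\mathbb Q$-Cartier" is invoked, negativity of contraction (the negativity lemma) forces $K_{Y'} + B_{Y'} \leq g^*(K_Y + B_Y)$ on the exceptional locus, hence $1 - b_P = \mathrm{coeff}_P(B_{Y'}) \leq \mathrm{coeff}_P(g^*(K_Y+B_Y))$, and combined with $b_P \geq 0$ from Proposition \ref{dis-bound} (applied to $f'$) this yields $a(P, Y, B_Y) \geq -1$. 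Since $P$ was an arbitrary divisor over $Y$ and $B_Y$ is effective, $(Y, B_Y)$ is log canonical. I would expect the write-up to be short, citing \cite{fl-plt}*{Lemma 5.1} and the base-change properties of \cite{fujino-slc-trivial} for the structure decomposition, with the negativity lemma doing the real work.
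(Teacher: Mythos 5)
Your strategy differs from the paper's: the paper disposes of sub-log-canonicity in one line by citing \cite{fujino-slc-trivial}*{Theorem 5.1} (which states precisely that $(Y,B_Y)$ is sub-lc whenever $K_Y+B_Y$ is $\mathbb Q$-Cartier) and then invokes Proposition \ref{dis-bound} for effectiveness. You instead try to reprove the sub-lc statement from scratch via birational base change and the negativity lemma. That is a legitimate route --- it is in substance the proof of the cited theorem --- but your execution contains a sign error that, as written, makes the final step fail.

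The correct comparison is as follows. On a sufficiently high model $g\colon Y'\to Y$ one has $K_{Y'}+B_{Y'}+M_{Y'}=g^*(K_Y+B_Y)+g^*M_Y$ with $M_{Y'}$ nef and $g_*M_{Y'}=M_Y$; the negativity lemma applied to the $g$-exceptional divisor $g^*M_Y-M_{Y'}$ (whose negative is $g$-nef) gives $M_{Y'}\le g^*M_Y$, hence
$$
K_{Y'}+B_{Y'}=g^*(K_Y+B_Y)+E,\qquad E:=g^*M_Y-M_{Y'}\ \ge\ 0,
$$
so $a(P,Y,B_Y)=\mult_P E-\mult_P B_{Y'}\ge -1$ because $B_{Y'}$ is a sub-boundary. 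You assert instead that negativity forces $K_{Y'}+B_{Y'}\le g^*(K_Y+B_Y)$; this is the wrong direction (it would correspond to $M_{Y'}\ge g^*M_Y$), and from it one only obtains the \emph{upper} bound $a(P,Y,B_Y)\le b_P-1$, which says nothing about log canonicity. The ingredients you list are the right ones, but the inequality must be reversed for the argument to close. Two smaller points: Proposition \ref{dis-bound} cannot simply be ``applied to $f'$,'' since the induced fibration on $Y'$ need not be very basic ($B_{Y'}$ need not be effective; cf.\ the discussion before Lemma \ref{res-lem}); what you actually need there is only the sub-boundary bound $\mult_P B_{Y'}\le 1$, which holds by \cite{fujino-slc-trivial}*{Theorem 1.7}, or directly from sub-slc-ness of $(X',B_{X'})$ over the generic point of $P$. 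Effectiveness of $B_Y$ itself, needed to upgrade sub-lc to lc, is exactly Proposition \ref{dis-bound}, which you do use correctly.
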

\begin{proof}
 By assumption and \cite{fujino-slc-trivial}*{Theorem 5.1}, $(Y, B_Y)$ is sub log canonical.
Thus it is log canonical by Proposition \ref{dis-bound}.
\end{proof}

We immediately get the following proposition about the moduli part by \cite{fujino-slc-trivial}*{Theorem 1.3}.

\begin{prop}\label{moduli-nef}
$M_Y$ is nef. That is, there exists a birational morphism $p: Y'\to Y$ and 
a nef $\mathbb Q$-Cartier $\mathbb Q$-divisor $M_{Y'}$ on $Y'$
such that $M_Y=p_*M_{Y'}$.
\end{prop}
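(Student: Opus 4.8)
The plan is to read the statement off the semipositivity theorem for the moduli $\mathbb Q$-b-divisor of $f$, namely \cite{fujino-slc-trivial}*{Theorem 1.3}, combined with the behaviour of the structure decomposition under birational base change. First I would recall the b-divisor picture. For a projective birational morphism $p\colon Y'\to Y$ from a normal variety $Y'$, let $f'\colon (X',B_{X'})\to (Y',D')$ be the induced pre-basic slc-trivial fibration obtained by a suitable modification of $X$ over $X\times_Y Y'$, with $D'=p^*D$, and let $D'=K_{Y'}+B_{Y'}+M_{Y'}$ be its structure decomposition. The assignment $Y'\mapsto M_{Y'}$ defines a $\mathbb Q$-b-divisor $\mathbf M$ over $Y$ with $\mathbf M_Y=M_Y$; moreover $p_*M_{Y'}=M_Y$, because the discriminant part and the canonical class push forward correctly, so that $p_*(K_{Y'}+B_{Y'})=K_Y+B_Y$ and hence $p_*M_{Y'}=p_*D'-p_*(K_{Y'}+B_{Y'})=D-K_Y-B_Y=M_Y$. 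These facts are part of the construction of the moduli b-divisor in \cite{fujino-slc-trivial}.

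Next I would apply \cite{fujino-slc-trivial}*{Theorem 1.3}, which asserts that $\mathbf M$ is b-nef: there is a projective birational morphism $p\colon Y'\to Y$ (one may even take $Y'$ smooth and $\mathbf M$ to descend to $Y'$) such that $M_{Y'}:=\mathbf M_{Y'}$ is a nef $\mathbb Q$-Cartier $\mathbb Q$-divisor on $Y'$. Combining this with $p_*M_{Y'}=M_Y$ from the previous step exhibits $M_Y$ as the pushforward of a nef $\mathbb Q$-Cartier $\mathbb Q$-divisor under a projective birational morphism, which is exactly the definition of nefness adopted for the (a priori only $\mathbb Q$-Weil) divisor $M_Y$ in \ref{2.3}. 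This would complete the proof.

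There is no genuine obstacle here; the only point requiring care is the identification $\mathbf M_Y=M_Y$, i.e. that the moduli part defined intrinsically on $Y$ via $M_Y=D-K_Y-B_Y$ coincides with the pushforward of the moduli part of the base-changed fibration. This compatibility is established in \cite{fujino-slc-trivial} precisely when the moduli b-divisor is constructed, so the proposition follows by directly quoting \cite{fujino-slc-trivial}*{Theorem 1.3} together with that base-change statement.
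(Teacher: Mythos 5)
Your proposal is correct and follows exactly the paper's route: the paper also obtains this proposition immediately from \cite{fujino-slc-trivial}*{Theorem 1.3}, with the identification $p_*M_{Y'}=M_Y$ being part of the construction of the moduli b-divisor there. Your additional remarks on the base-change compatibility just make explicit what the paper leaves implicit.
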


\begin{rem}\label{nef-hist}
In Kodaira's canonical bundle formula for elliptic fibrations, 
the moduli part is semi-ample by \eqref{eq1.1}. 
It is natural to wonder the abundance of the moduli part in general cases.
In \cite{kawamata-hist}*{Theorem 2}, 
Kawamata essentially proved that the moduli part of a basic klt-trivial fibration is nef. 
After he learned \cite{kawamata-hist}, Fujino
discussed the semi-ampleness of the moduli part for certain algebraic fiber spaces in \cite{fujino-bundle}
and showed that the moduli part of a basic lc-trivial fibration is nef in \cite{fujino-bundle-a}.
In \cite{ambro-shokurov}, Ambro proved that the moduli part of a basic klt-trivial fibration 
is {\em{b-nef}} and {\em{abundant}} under some assumptions, and is semi-ample when $\dim Y=1$.
Later Fujino--Gongyo 
generalized these results for basic lc-trivial fibrations in \cite{fujino-gongyo}.
Soon after Fujino posted the b-semi-ampleness conjecture for basic slc-trivial fibrations
in his recent work \cite{fujino-slc-trivial},
Fujino--Fujisawa--Liu \cite{fujino-fujisawa-liu} proved that
b-semi-ampleness conjecture holds true in $\dim Y=1$.
It is worth to mention that there are effective versions of the abundance of the moduli part.
In this direction, see \cites{ps, floris} and the references therein. 
\end{rem}

Next, we prepare some general lemmas for very basic slc-trivial fibrations.
The first lemma is quite similar to \cite{fl-db}*{Lemma 2.2}. 

\begin{lem}\label{slc-lem}
Let $f\colon (X, B_X) \to (Y,D)$ be a very basic slc-trivial fibration
with $K_X+B_X+\frac{1}{b}(\varphi)=f^*D$.
Let $g\colon Y\to Y'$ be a proper surjective morphism 
between varieties such that $g_*\mathcal O_Y\simeq \mathcal O_{Y'}$. 
Assume that $D=g^*D'+E$ holds for some $\mathbb Q$-Cartier 
 $\mathbb Q$-divisor $D'$ on $Y'$, and $rE$ is an effective Cartier divisor 
for some integer $r\geq 1$ such that
$\mathcal O_{Y'} \simeq  g_*\mathcal O_Y(rE)$. 
Then $g\circ f\colon (X, B_X-f^*E) \to (Y',D')$ is an induced very basic slc-trivial fibration.
\end{lem}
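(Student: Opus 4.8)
The plan is to verify, one by one, the six conditions of Definition \ref{slc-def} for the composite $g\circ f\colon (X, B_X - f^*E) \to (Y', D')$, observing first that many of them are essentially formal and that the real content lies in checking sub-slc-ness over $Y'$ together with the two rank/isomorphism conditions. First I would set $B_X' := B_X - f^*E$ and record the basic equality: from $K_X+B_X+\tfrac1b(\varphi)=f^*D$ and $D = g^*D' + E$ we get
\begin{equation*}
K_X + B_X' + \tfrac1b(\varphi) = f^*D - f^*E = f^*(g^*D') = (g\circ f)^*D',
\end{equation*}
which immediately gives condition (4) (in the form \eqref{eq1.2} of Remark \ref{rem1.2}), and also shows $K_X+B_X'\sim_{\mathbb Q}(g\circ f)^*D'$. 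Condition (1) is that $Y'$ is normal; this I would get by noting that $g_*\mathcal O_Y\simeq\mathcal O_{Y'}$ together with $Y$ normal forces $Y'$ normal (a standard consequence, since $\mathcal O_{Y'}\hookrightarrow g_*\mathcal O_Y$ is integrally closed in its function field). For condition (2): $(g\circ f)_*\mathcal O_X = g_*(f_*\mathcal O_X) = g_*\mathcal O_Y = \mathcal O_{Y'}$, and the stratum-dominance is preserved since every stratum of $X$ dominates $Y$ and $Y$ dominates $Y'$.

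Next comes condition (3), sub-slc-ness of $(X, B_X')$ over $Y'$. Over the generic point of $Y'$, the divisor $f^*E$ is pulled back from an open neighborhood of the generic point of $Y$ where $E$ may be nonzero, so strictly this needs an argument: since $E$ is $g$-exceptional in the sense that $\mathcal O_{Y'}\simeq g_*\mathcal O_Y(rE)$, the divisor $E$ is vertical for $g$ and contributes nothing over the generic point of $Y'$; hence over that generic point $B_X' = B_X$ and sub-slc-ness is inherited from $f$. I expect this verticality point — that $\mathcal O_{Y'}\simeq g_*\mathcal O_Y(rE)$ forces $E$ to be $g$-exceptional and in particular vertical — to be the main technical obstacle, though it is the direct analogue of the argument in \cite{fl-db}*{Lemma 2.2} and should go through by the same reasoning (if $E$ had a component dominating $Y'$, or even a positive part, $g_*\mathcal O_Y(rE)$ would be strictly larger than $\mathcal O_{Y'}$).

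For conditions (5) and (6) I would use the projection formula together with the hypothesis $\mathcal O_{Y'}\simeq g_*\mathcal O_Y(rE)$ applied more carefully to the rounded-up divisor. The key computation is
\begin{equation*}
\lceil -((B_X')^{<1})\rceil = \lceil -((B_X - f^*E)^{<1})\rceil,
\end{equation*}
and since $f^*E$ is vertical and pulled back from $Y$, one checks $-((B_X')^{<1})$ differs from $-(B_X^{<1})$ by $f^*E$ up to components where coefficients cross $1$; a standard bookkeeping (as in the cited lemmas) gives $\lceil -((B_X')^{<1})\rceil = \lceil -(B_X^{<1})\rceil + f^*(\text{something related to }E)$, so that
\begin{equation*}
(g\circ f)_*\mathcal O_X(\lceil -((B_X')^{<1})\rceil) \cong g_*\big(f_*\mathcal O_X(\lceil -(B_X^{<1})\rceil)\otimes\mathcal O_Y(\text{that divisor})\big).
\end{equation*}
Using condition (6) for $f$, namely $\mathcal O_Y\simeq f_*\mathcal O_X(\lceil -(B_X^{<1})\rceil)$, and then $\mathcal O_{Y'}\simeq g_*\mathcal O_Y(rE)$ (or rather the appropriate integral version), this collapses to $\mathcal O_{Y'}$, giving both the rank-one statement (5) and the isomorphism statement (6). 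Finally I would note that $D'$ being $\mathbb Q$-Cartier is part of the hypothesis, and that $(X, B_X')$ is still a simple normal crossing pair since $B_X'$ and $B_X$ have the same support structure up to vertical divisors pulled back from $Y$. This completes all six conditions, so $g\circ f\colon (X, B_X') \to (Y', D')$ is a very basic slc-trivial fibration.
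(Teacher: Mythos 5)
Your overall strategy---checking the six conditions of Definition \ref{slc-def} one by one, with the substance concentrated in (5) and (6)---is the same as the paper's, and your treatment of (1), (2) and (4) matches it. The gap is in the key step (5)/(6). You assert an equality $\lceil -(B'^{<1}_X)\rceil=\lceil -(B^{<1}_X)\rceil+f^*(\text{something related to }E)$ and then an isomorphism of pushforwards via the projection formula. No such equality holds in general: a component $Q$ of $\Supp f^*E$ with $\mult_Q B_X=1$ lies in $B^{=1}_X$ but acquires coefficient $1-\mult_Q f^*E<1$ in $B'_X=B_X-f^*E$, so it contributes $\lceil \mult_Q f^*E-1\rceil$ to the left-hand side; this correction depends on the individual multiplicities of $f^*E$ and on which components sit in $B^{=1}_X$ versus $B^{<1}_X$, so it is not of the form $f^*(\cdot)$ for any divisor on $Y$. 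The paper never claims an equality. It records $B'^{<1}_X+B'^{=1}_X=B^{<1}_X+B^{=1}_X-f^*E$ together with $B'^{=1}_X\leq B^{=1}_X$, which yield only the one-sided inequality $B'^{<1}_X\geq B^{<1}_X-f^*E$, hence $\lceil -(B'^{<1}_X)\rceil\leq\lceil -(B^{<1}_X)+f^*E\rceil$, and then sandwiches:
$$
\mathcal O_{Y'}\hookrightarrow g_*f_*\mathcal O_X(\lceil -(B'^{<1}_X)\rceil)\hookrightarrow
g_*f_*\mathcal O_X(\lceil -(B^{<1}_X)+f^*E\rceil)\hookrightarrow g_*\mathcal O_Y(rE)\simeq\mathcal O_{Y'},
$$
where the last injection uses condition (6) for $f$ and the projection formula. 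Since these are inclusions of subsheaves with equal ends, the first map is an isomorphism, which is exactly condition (6) (and hence (5)). Your ``standard bookkeeping'' must be replaced by this sandwich; as literally written, your step would fail.

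A secondary point: your argument for condition (3) rests on the claim that $\mathcal O_{Y'}\simeq g_*\mathcal O_Y(rE)$ forces $E$ to be vertical over $Y'$ because otherwise the pushforward ``would be strictly larger than $\mathcal O_{Y'}$.'' That implication is false in general: an effective nonzero divisor on the generic fiber of $g$ can perfectly well have a one-dimensional space of sections (for instance a general point on a curve of positive genus), so non-verticality of $E$ does not by itself enlarge $g_*\mathcal O_Y(rE)$. In the situations where the lemma is actually invoked (Remark \ref{push-rem}: $g$ birational, or $E=0$) verticality is immediate, and since $E\geq 0$ the boundary only decreases, so the subtraction of $f^*E$ does not threaten sub-slc-ness; but the justification you give cannot stand in the stated generality.
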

\begin{proof}
Since $Y$ is normal and $g_*\mathcal O_Y\simeq \mathcal O_{Y'}$,
$Y'$ is also normal.
Consider the induced morphism $g\circ f\colon X \to Y'$ with 
$$
K_X+B_X-f^*E+\frac{1}{b}(\varphi)=f^*g^*D'.
$$
Let $B'_{X}=B_X-f^*E$.
It is easy to check conditions (1)--(4) of Definition \ref{slc-def} one by one.
For the condition (6), which implies the condition (5), 
we have that $B'^{<1}_X+B'^{=1}_X=B^{<1}_X+B^{=1}_X-f^*E$ and $B'^{=1}_X\leq B^{=1}_X$.
Therefore,
$$
\mathcal O_{Y'}\hookrightarrow g_*f_*\mathcal O_X(\lceil -(B'^{<1}_X)\rceil)\hookrightarrow
g_*f_*\mathcal O_X(\lceil -(B^{<1}_X)+f^*E\rceil) \hookrightarrow
g_*\mathcal O_Y(rE) \simeq \mathcal O_{Y'}.
$$
That is, $\mathcal O_{Y'}\hookrightarrow g_*f_*\mathcal O_X(\lceil -(B'^{<1}_X)\rceil)$
is an isomorphism. 
\end{proof}

\begin{rem}\label{push-rem}
Note that  the morphism $g\colon Y\to Y'$ above is not necessary to be birational. 
In this paper, we only use Lemma \ref{slc-lem} 
in the case that either $g$ is birational or $E=0$.
\end{rem}

The second lemma is similar to \cite{liu-fujita}*{Lemma 2.10}.
Let $f\colon (X, B_X) \to (Y,D)$ be a very basic slc-trivial fibration
with the structure decomposition $D=K_Y+B_Y+M_Y$.
Let $p\colon Y'\to Y$ be a birational morphism between normal varieties.
By \cite{fujino-slc-trivial}*{Section 4, Theorem 1.3 and 1.7},
there is an induced (not necessarily very) basic slc-trivial fibration  as
$f'\colon (X', B_{X'}) \to (Y',D')$
such that $D'=p^*D$ and
the following diagram commutes:
\begin{equation}\label{eq3.1}
\xymatrix{
(X', B_{X'})\ar[r]^-q \ar[d]_-{f'}& (X, B_{X})\ar[d]^-f \\ 
Y'\ar[r]_-p& Y
}
\end{equation}
 In particular, there is also a structure decomposition $D'=K_{Y'}+B_{Y'}+M_{Y'}$.
In general, $B_{Y'}$ is not effective, but is a sub boundary
and $B^{<0}_{Y'}$ is $p$-exceptional
 by \cite{fujino-slc-trivial}*{Theorem 1.7}.
Let $P$ be a prime divisor on $Y'$ and $t_P=\mult_P  B_{Y'}$.
Then, $K_{X'}+B_{X'}+(1-t_P)f'^*P$ is sub-slc over the generic point of $P$ by 
\cite{fujino-slc-trivial}*{(4.5)}. That is, 
for any prime divisor $Q \in \Supp (f'^*P_{\eta})$, 
there are
two rational numbers $a_Q:=\mult_Q B_{X'}$ and $b_Q:=\mult_Q {f'}^*P_{\eta}$
such that 
$$
a_Q+(1-t_P)b_Q\leq 1.
$$ 
Moreover, $a_Q=1$ implies that $t_P=1$.
Since the generic point of $P$ is Cartier, $b_Q$ is a positive integer for every $Q$. 
Therefore, 
\begin{equation}\label{eq3.2}
0\leq b_Q-1\leq -a_Q+t_Pb_Q.
\end{equation}
Let $R=f'^*(B^{<0}_{Y'})_{\eta}$. Then by \eqref{eq3.2}, we can check that
$\lceil -(B_{X'}^{<1})+R\rceil$ is effective.
Therefore, there is a natural injective morphism
$$
\alpha\colon \mathcal O_{Y'}\hookrightarrow
f'_*\mathcal O_{X'}(\lceil -(B_{X'}^{<1})+R\rceil).
$$
The second lemma shows that $\alpha$ is an isomorphism. More precisely,

\begin{lem}\label{res-lem}
Notation as above. Then
\begin{equation}\label{eq3.3}
\alpha\colon \mathcal O_{Y'}\hookrightarrow
f'_*\mathcal O_{X'}(\lceil -(B_{X'}^{<1})+R\rceil)
\end{equation}
is an isomorphism. In particular,  if
$B_{Y'}$ is effective,
then $f'\colon (X', B_{X'}) \to (Y',D')$ is a very basic slc-trivial fibration.
\end{lem}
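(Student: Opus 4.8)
The plan is to show that the injection $\alpha$ in \eqref{eq3.3} is an isomorphism by pushing the statement down to $Y$ via $p$ and using the defining isomorphism (6) of Definition \ref{slc-def} for the original very basic slc-trivial fibration $f\colon (X,B_X)\to (Y,D)$. First I would apply $p_*$ to $\alpha$: since $q\colon X'\to X$ in diagram \eqref{eq3.1} is birational and $f\circ q = p\circ f'$, we have $p_* f'_* \mathcal O_{X'}(\lceil -(B_{X'}^{<1})+R\rceil) = f_* q_* \mathcal O_{X'}(\lceil -(B_{X'}^{<1})+R\rceil)$. The key computation is then to identify $q_* \mathcal O_{X'}(\lceil -(B_{X'}^{<1})+R\rceil)$ with $\mathcal O_X(\lceil -(B_X^{<1})\rceil)$, or at least to produce a chain of injections
$$
\mathcal O_{Y'}\hookrightarrow f'_*\mathcal O_{X'}(\lceil -(B_{X'}^{<1})+R\rceil), \qquad
p_*\mathcal O_{Y'}=\mathcal O_Y \hookrightarrow f_* q_*\mathcal O_{X'}(\lceil -(B_{X'}^{<1})+R\rceil) \hookrightarrow f_*\mathcal O_X(\lceil -(B_X^{<1})\rceil)\simeq \mathcal O_Y,
$$
where the last isomorphism is exactly condition (6) for the original fibration. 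Since the composite $\mathcal O_Y\to\mathcal O_Y$ is the identity, every injection in the chain — in particular $p_*\alpha$ — is an isomorphism, and since $\alpha$ is a morphism of sheaves on $Y'$ whose pushforward to $Y$ is an isomorphism and whose cokernel is supported on $p$-exceptional loci, $\alpha$ itself is an isomorphism (one checks this over the generic point of every prime divisor on $Y'$, reducing to the birational case where $p$ is an isomorphism in codimension one).

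The main technical point — and the step I expect to be the main obstacle — is the comparison $q_*\mathcal O_{X'}(\lceil -(B_{X'}^{<1})+R\rceil)\hookrightarrow \mathcal O_X(\lceil -(B_X^{<1})\rceil)$ on the level of $X$. Here $R = f'^*(B_{Y'}^{<0})_\eta$ is supported over the $p$-exceptional locus, so horizontally $B_{X'}$ and $B_X$ agree, and vertically one must use the discrepancy bookkeeping \cite{fujino-slc-trivial}*{Theorem 1.7} together with inequality \eqref{eq3.2}: for a prime divisor $Q$ over a prime $P\subset Y'$ with $t_P=\mult_P B_{Y'}<0$, the inequality $b_Q-1\le -a_Q+t_P b_Q$ controls exactly how much room $R$ adds, ensuring $\lceil -(B_{X'}^{<1})+R\rceil$ is effective and that its $q$-pushforward does not exceed $\lceil -(B_X^{<1})\rceil$ away from $q$-exceptional divisors. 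The argument is parallel to that of \cite{liu-fujita}*{Lemma 2.10}; the care needed is precisely in tracking the vertical components of $B_{X'}$ over the $p$-exceptional divisors of $Y'$ and showing no extra global sections are created.

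Finally, for the last assertion, if $B_{Y'}$ is effective then $B_{Y'}^{<0}=0$, hence $R=0$ and \eqref{eq3.3} reduces to $\alpha\colon\mathcal O_{Y'}\hookrightarrow f'_*\mathcal O_{X'}(\lceil -(B_{X'}^{<1})\rceil)$ being an isomorphism, which is condition (6) for $f'$; together with conditions (1)--(5) — which hold for the induced basic slc-trivial fibration by \cite{fujino-slc-trivial}*{Section 4, Theorem 1.3 and 1.7} — this shows $f'\colon (X',B_{X'})\to(Y',D')$ is a very basic slc-trivial fibration.
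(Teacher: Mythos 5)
Your proposal has a genuine gap at exactly the place where the lemma has content: the $p$-exceptional prime divisors of $Y'$. Your strategy is to push $\alpha$ down to $Y$, sandwich it in a chain $\mathcal O_Y\hookrightarrow\cdots\hookrightarrow\mathcal O_Y$ whose composite is the identity, and then deduce that $\alpha$ itself is an isomorphism because its cokernel is supported on $p$-exceptional loci. That last inference fails. The sheaf $f'_*\mathcal O_{X'}(\lceil -(B_{X'}^{<1})+R\rceil)$ is torsion-free of rank one and, in codimension one, equals $\mathcal O_{Y'}(G)$ for some effective divisor $G$; your descent argument only shows that $G$ has no non-exceptional components. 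But for an effective $p$-exceptional $G$ one has $p_*\mathcal O_{Y'}(G)\simeq\mathcal O_Y$ automatically ($Y$ normal), so $p_*\alpha$ being an isomorphism is consistent with $G\neq 0$, and nothing in your chain rules this out. Likewise, your reduction "to the birational case where $p$ is an isomorphism in codimension one" is not available: $p$ is a birational morphism with genuine exceptional divisors, and over their generic points the comparison with $\mathcal O_X(\lceil -(B_X^{<1})\rceil)$ on $X$ sees nothing, since the relevant divisors of $X'$ map into codimension $\geq 2$ subsets of $Y$.

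The paper closes this gap by working directly in codimension one on $Y'$ and using the fact that $t_P=\mult_P B_{Y'}$ is defined as an slc threshold, hence is \emph{achieved}: for each $p$-exceptional $P$ there is a prime divisor $Q\in\Supp(f'^*P_\eta)$ with $a_Q+(1-t_P)b_Q=1$ exactly, equivalently $\lceil -a_Q+t_Pb_Q\rceil=b_Q-1<b_Q$. This means $\lceil -(B_{X'}^{<1})+R\rceil\ngeq f'^*P_\eta$ over the generic point of $P$, so at that generic point
$$
\mathcal O_{Y'}\hookrightarrow f'_*\mathcal O_{X'}(\lceil -(B_{X'}^{<1})+R\rceil)\subsetneq\mathcal O_{Y'}(P),
$$
and over the DVR at $P$ a fractional ideal strictly contained in $\mathcal O_{Y'}(P)$ and containing $\mathcal O_{Y'}$ must equal $\mathcal O_{Y'}$. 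Your inequality \eqref{eq3.2} only gives effectivity of $\lceil -(B_{X'}^{<1})+R\rceil$; it is the equality case, not the inequality, that kills the exceptional contribution. For the divisors of $Y'$ not in $\Exc(p)$ your descent via condition (6) is essentially the paper's argument, and your treatment of the "in particular" statement ($B_{Y'}$ effective implies $R=0$) is correct.
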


\begin{proof}
Since $Y'$ is normal and $f'_*\mathcal O_{X'}(\lceil -(B_{X'}^{<1})+R\rceil)$ is torsion-free,
it suffices to prove that $\alpha$ is an isomorphism in codimension one.
Let $P$ be a prime divisor on $Y'$ not contained in $\Exc(p)$.
Then
$$
f'_*\mathcal O_{X'}(\lceil -(B_{X'}^{<1})+R\rceil)\simeq f'_*\mathcal O_{X'}(\lceil -(B_{X'}^{<1})\rceil)
$$
at the generic point of $P$.
Since $p\colon P\to p(P)$ is an isomorphism at the generic points,
$\alpha$ is an isomorphism at the generic point of $P$ by pulling back the isomorphism
$$
\mathcal O_Y\simeq f_*\mathcal O_X(\lceil -(B^{<1}_X)\rceil)
$$
at the generic point of $p(P)$.
Therefore, we assume that $P$ is a prime divisor in $\Exc(p)$ and $t_P=\mult_P  B_{Y'}$.
Then there exists a prime divisor $Q\in \Supp(f'^*P_{\eta})$, 
a rational number $a:=\mult_Q B_{X'}$ and a positive integer $b:=\mult_Q {f'}^*P_{\eta}$
such that $a+(1-t_P)b=1$.
Equivalently,
$$
\lceil - a+t_Pb \rceil=b-1.
$$
That is,
$\lceil -(B_{X'}^{<1})+R\rceil \ngeq f'^*P_{\eta}$ over the generic point of $P$. Then
$$
\mathcal O_{Y'} \hookrightarrow 
f'_*\mathcal O_{X'}(\lceil -(B_{X'}^{<1})+R\rceil) \subsetneq
 \mathcal O_{Y'}(P)
$$
at the generic point of $P$. It follows that $\mathcal O_{Y'} \simeq
f'_*\mathcal O_{X'}(\lceil -(B_{X'}^{<1})+R\rceil)$ at the generic point of $P$. 
In conclusion, $\alpha$ is isomorphic at the generic point of any prime divisor on $Y'$ and 
thus an isomorphism.
In particular, if $B_{Y'}$ is effective,
then $R=0$ and 
 $\mathcal O_{Y'} \simeq
f'_*\mathcal O_{X'}(\lceil -(B_{X'}^{<1})\rceil)$.
By the definition, $f'\colon (X', B_{X'}) \to (Y',D')$ is a very basic slc-trivial fibration.
\end{proof}

There are several useful corollaries of Lemma \ref{res-lem}.
One of them is that, this lemma enable us to modify $Y$ and thus $f$ by a higher model $Y'$ and $f'$
 whose discriminant part is still a boundary.
In particular, we have the following  corollary.

\begin{cor}\label{dlt-cor}
Let $f\colon (X, B_X) \to (Y,D)$ be a very basic slc-trivial fibration
with the structure decomposition $D=K_Y+B_Y+M_Y$.
Assume that $K_Y+B_Y$ is $\mathbb Q$-Cartier.
Let $p\colon Y'\to Y$ be a $\mathbb Q$-factorial
dlt blow-up such that $K_{Y'}+B=p^*(K_Y+B_Y)$
and $(Y', B)$ is a dlt pair.
Then there is an induced very basic slc-trivial fibration $f'\colon (X', B_{X'}) \to (Y',D')$.
\end{cor}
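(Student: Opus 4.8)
The plan is to combine the construction recalled just before Lemma \ref{res-lem} with Lemma \ref{res-lem} itself, so that everything reduces to the effectivity of the discriminant part of the induced fibration on $Y'$. (Such a $\mathbb Q$-factorial dlt blow-up exists precisely because $(Y,B_Y)$ is log canonical by Corollary \ref{cart-cor}.) Applying that construction to the birational morphism $p\colon Y'\to Y$ produces an induced basic slc-trivial fibration $f'\colon (X',B_{X'})\to (Y',D')$ with $D'=p^*D$, fitting into the commutative diagram \eqref{eq3.1}, together with a structure decomposition $D'=K_{Y'}+B_{Y'}+M_{Y'}$. By Lemma \ref{res-lem}, it suffices to show that $B_{Y'}$ is effective; once this is known, $f'$ is automatically a very basic slc-trivial fibration.

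To analyze $B_{Y'}$, note first that $M_Y=D-(K_Y+B_Y)$ is $\mathbb Q$-Cartier, since both $D$ and $K_Y+B_Y$ are, so $p^*M_Y$ is defined. Then
$$
D'=p^*D=p^*(K_Y+B_Y)+p^*M_Y=(K_{Y'}+B)+p^*M_Y,
$$
and comparing this with $D'=K_{Y'}+B_{Y'}+M_{Y'}$ gives the identity $B_{Y'}=B+(p^*M_Y-M_{Y'})$. Since $(Y',B)$ is dlt, $B$ is effective, so it is enough to prove the inequality $M_{Y'}\leq p^*M_Y$.

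For this I would argue by the negativity lemma on a sufficiently high model. Using Proposition \ref{moduli-nef} (applied on a common birational model), choose a birational morphism $\mu\colon Y_0\to Y'$ such that the moduli part $M_{Y_0}$ of the induced fibration on $Y_0$ is a nef $\mathbb Q$-Cartier divisor, and put $\rho=p\circ\mu\colon Y_0\to Y$. Since the moduli part is the trace of a b-divisor, $\mu_*M_{Y_0}=M_{Y'}$ and $\rho_*M_{Y_0}=M_Y$, so $N:=M_{Y_0}-\rho^*M_Y$ is $\rho$-exceptional with $\rho_*N=0$. As $N\equiv_{\rho}M_{Y_0}$ is $\rho$-nef, the negativity lemma forces $N\leq 0$, i.e. $M_{Y_0}\leq\rho^*M_Y$. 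Pushing this forward by $\mu$ — the pushforward of an effective divisor is effective, and $\mu_*\rho^*M_Y=p^*M_Y$ by the projection formula — yields $M_{Y'}\leq p^*M_Y$. Combined with the identity above, $B_{Y'}=B+(p^*M_Y-M_{Y'})\geq B\geq 0$, and Lemma \ref{res-lem} completes the proof.

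I expect the inequality $M_{Y'}\leq p^*M_Y$ to be the only genuine difficulty. The moduli part on $Y'$ itself need not be nef, so one cannot invoke the negativity lemma directly on $Y'$: one is forced to pass to a model on which the moduli part becomes nef, prove the inequality there, and transport it back down by pushforward. Everything else is a bookkeeping comparison of the two structure decompositions of $D'$ followed by a direct appeal to Lemma \ref{res-lem}.
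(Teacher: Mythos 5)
Your argument is correct and follows essentially the same route as the paper: reduce via Lemma \ref{res-lem} to the effectivity of $B_{Y'}$, and obtain $M_{Y'}\leq p^*M_Y$ (hence $B_{Y'}=B+(p^*M_Y-M_{Y'})\geq B\geq 0$) from the negativity lemma. The only differences are cosmetic: you apply negativity on a higher model where the moduli part is known to be nef (arguably more careful than the paper, which invokes it directly on $Y'$), while the paper additionally uses the dlt blow-up structure and the sub-boundary property of $B_{Y'}$ to deduce the stronger identities $E=0$, $B=B_{Y'}$ and $M_{Y'}=p^*M_Y$, which are not needed for the statement itself.
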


\begin{proof}
By Lemma \ref{res-lem},
it suffices to show that $B$ coincides with $B_{Y'}$
in the structure decomposition of $p^*D=K_{Y'}+B_{Y'}+M_{Y'}$.
Note that $M_{Y'}$ is a nef $\mathbb Q$-Cartier $\mathbb Q$-divisor and $M_Y=p_*M_{Y'}$ is also a 
nef $\mathbb Q$-Cartier $\mathbb Q$-divisor by assumption.
By the negative lemma (cf. \cite{kollar-mori}*{Lemma 3.39}), 
we have that $M_{Y'}+E=p^*M_Y$ for some effective $p$-exceptional
$\mathbb Q$-divisor $E$ on $Y'$.
Then 
$$
K_{Y'}+B+p^*M_Y=p^*D=K_{Y'}+B_{Y'}+M_{Y'}=K_{Y'}+B_{Y'}-E+p^*M_Y,
$$
which implies that $B+E=B_{Y'}$. 
Since $p$ is a dlt blow-up, $E$ is supported by $B^{=1}$.
It follows that $E=0$ since $B_{Y'}$ is a sub boundary.
In particular, $M_{Y'}=p^*M_Y$ and  $B=B_{Y'}$.
\end{proof}

In the rest of this section, we use Lemma \ref{res-lem} to generalize the connectedness properties in
\cite{liu}*{(2.2)}.

\begin{defn}\label{def-lc}
Let $f\colon (X, B_X) \to (Y,D)$ be a basic slc-trivial fibration.
Then an {\em{lc center}} $Z$ of $f$ is an image of some slc center of $(X, B_X)$
such that $Z\subsetneq Y$.
\end{defn}

\begin{rem}
As showed in Remark \ref{rem1.3}, after some modifications, a normal qlc pair is a 
 very basic slc-trivial fibration. Then the definition of qlc centers 
(cf. \cite{fujino-foundations}*{Definitions 6.2.2 and 6.2.8}) coincides
with the lc centers defined above. Note also that a crepant log resolution 
$f\colon (X,B_{X}) \to (Y,B_Y)$ of an lc pair $(Y,B_Y)$ is also a very basic lc-trivial fibration.
In this case, the definition of lc centers in Section \ref{2.2} also coincides with
the lc centers of $f$.
\end{rem}

Since every stratum of $X$ is dominant onto $Y$, the slc center of $(X, B_X)$ mapping into an lc center
$Z$ of $f$ must be contained in $\Supp B^{=1}_X$. By further blowing ups, we always assume that
every maximal slc center of $(X, B_X)$ mapping into $Z$ is a smooth component of $B^{=1}_X$
and the union of them is a simple normal crossing divisor on $X$.

\begin{cor}[Connectedness]\label{conn-cor}
Assume that there exists a commutative diagram 
$$
\xymatrix{
(X', B_{X'})\ar[r]^-q \ar[d]_-{f'}& (X, B_{X})\ar[d]^-f \\ 
Y'\ar[r]_-p& Y
}
$$ with the same notation as in \eqref{eq3.1}.
Let  $Z$ be a union of lc centers of $f$ 
and $Z'$ be the union of all lc centers of $f'$ mapping into $Z$. 
Assume that $Z'$ is a simple normal crossing divisor on $Y'$.
We also assume that 
the union of strata of $B^{=1}_X$ mapping into $Z$ is a simple normal crossing divisor
on $X$, denote it as $T$, and that the union of strata of $B^{=1}_{X'}$ mapping into $Z'$
is a simple normal crossing divisor on $X'$, denote it as $T'$. 
Put $K_T+B_T:=(K_X+B_X)|_T$ and $K_{T'}+B_{T'}:=(K_{X'}+B_{X'})|_{T'}$.
Then
there exists a commutative diagram 
$$
\xymatrix{
(T', B_{T'})\ar[r]^-q \ar[d]_-{f'|_{T'}}& (T, B_{T})\ar[d]^-{f|_T} \\ 
Z'\ar[r]_-p& Z
}
$$
such that $\mathcal O_{Z} \hookrightarrow (f|_{T})_*\mathcal O_{T}$ is an isomorphism
and
$\mathcal O_{Z'} \hookrightarrow (f'|_{T'})_*\mathcal O_{T'}$ is an isomorphism
at every generic points of $Z'$.
\end{cor}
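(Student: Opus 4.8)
The plan is to reduce the statement to the known connectedness / normality results already available for very basic slc-trivial fibrations by realizing $(f|_T\colon (T,B_T)\to Z)$ itself as a very basic slc-trivial fibration, after suitable modification. First I would recall that, by the reduction set up just before the statement, every maximal slc center of $(X,B_X)$ mapping into $Z$ is arranged to be a smooth component of $B^{=1}_X$, and $T$ is a reduced simple normal crossing divisor on $X$ whose components are exactly these strata; similarly $T'$ on $X'$. Since every stratum of $X$ is dominant onto $Y$ (condition (2) of Definition \ref{slc-def}) and $B^{=1}_X$ has simple normal crossings, adjunction $K_T+B_T=(K_X+B_X)|_T$ is well defined, $(T,B_T)$ is again a simple normal crossing pair, and the restriction $f|_T\colon T\to Z$ is projective surjective with every stratum of $T$ dominant onto its image; the structure relation $K_X+B_X+\tfrac1b(\varphi)=f^*D$ restricts to give $K_T+B_T+\tfrac1b(\varphi|_T)=(f|_T)^*(D|_Z)$, so $K_T+B_T\sim_{\mathbb Q}(f|_T)^*(D|_Z)$. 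This gives conditions (1)--(4) for $f|_T\to Z$ (after normalizing $Z$; note $Z$ need not be normal, which is exactly why one passes through the simple normal crossing model $Z'$ downstairs).

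Next I would establish the surjectivity-of-$f_*\mathcal O$ claims. The key input is Lemma \ref{res-lem}: applied to the birational modification $p\colon Y'\to Y$ and its induced very basic slc-trivial fibration $f'\colon(X',B_{X'})\to(Y',D')$, it yields the isomorphism $\alpha\colon \mathcal O_{Y'}\xrightarrow{\ \sim\ } f'_*\mathcal O_{X'}(\lceil -(B_{X'}^{<1})+R\rceil)$ where $R=f'^*(B^{<0}_{Y'})_\eta$. Restricting this isomorphism to the strata $T$ and $T'$: on $X$ we already have $\mathcal O_Y\simeq f_*\mathcal O_X(\lceil -(B_X^{<1})\rceil)$ (condition (6)), and I would use the Kollár-type injectivity/vanishing machinery that underlies \cite{fujino-slc-trivial} — concretely, the exact sequence $0\to \mathcal O_X(\lceil -(B_X^{<1})\rceil - T)\to \mathcal O_X(\lceil -(B_X^{<1})\rceil)\to \mathcal O_T(\lceil -(B_T^{<1})\rceil)\to 0$ and the fact that the first sheaf pushes forward to something whose relevant higher direct image vanishes along the generic point of $Z$ — to conclude that $\mathcal O_Z\hookrightarrow (f|_T)_*\mathcal O_T(\lceil -(B_T^{<1})\rceil)$ is an isomorphism, and hence so is $\mathcal O_Z\hookrightarrow (f|_T)_*\mathcal O_T$ since along the lc centers the round-up contributes nothing new in codimension zero on $Z$. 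The statement about $Z'$ is weaker (only at the generic points of $Z'$), so there I only need the codimension-one analysis already carried out in the proof of Lemma \ref{res-lem}, localized at each component of $Z'$: over the generic point of a component of $Z'$ not in $\Exc(p)$ one pulls back the isomorphism over $Z$; over an exceptional component one uses $\lceil -(B_{X'}^{<1})+R\rceil\not\geq f'^*P_\eta$ exactly as in that proof to force $\mathcal O_{Z'}\simeq (f'|_{T'})_*\mathcal O_{T'}$ there. Finally, the commutative diagram $q|_{T'},p|_{Z'}$ is just the restriction of \eqref{eq3.1}, using that $q$ maps strata of $B^{=1}_{X'}$ to strata of $B^{=1}_X$ and hence $T'$ into $T$, and $p$ maps $Z'$ into $Z$ by construction.

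The main obstacle I expect is the first isomorphism, $\mathcal O_Z\xrightarrow{\ \sim\ }(f|_T)_*\mathcal O_T$ (equivalently the connectedness of the fibers of $f|_T\to Z$ together with $Z$ being the image without extra components), because $Z$ is not assumed normal and $T$ is a union of several components of $B^{=1}_X$ rather than a single variety. This is precisely the point where one cannot simply quote condition (6) on the base $Y$: one needs the adjunction-type statement that restricting the defining isomorphism on $Y$ to the union of lc centers $Z$ still gives an isomorphism, which is the content one extracts from the vanishing theorems for simple normal crossing pairs (Ambro--Fujino injectivity) applied to the ideal sheaf of $T$ in $X$. I would handle this by the standard two-step argument: (a) prove it in codimension one on $Z$ by the local computation of Lemma \ref{res-lem} — over the generic point of each irreducible component of $Z$ the fiber of $f|_T$ is the corresponding stratum of a fiber of $f$, whose $H^0$ is one-dimensional by condition (6); (b) upgrade from codimension one to an isomorphism using that $(f|_T)_*\mathcal O_T$ is $S_2$ (it is a direct image of a structure sheaf of a reduced $S_2$ scheme) and $\mathcal O_Z$ is likewise $S_2$ after passing to the semi-normalization, so an isomorphism in codimension one is an isomorphism. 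The same $S_2$/codimension-one bootstrap, already used verbatim in the proof of Lemma \ref{res-lem}, is what makes the $Z'$ statement routine once the generic-point computation is in hand.
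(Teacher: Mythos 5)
Your proposal follows essentially the same route as the paper: the decisive tool is the short exact sequence $0\to \mathcal O_X(A-T)\to \mathcal O_X(A)\to \mathcal O_T(A|_T)\to 0$ with $A=\lceil -(B^{<1}_X)\rceil$, pushed forward along $f$, together with Fujino's torsion-freeness/strata-support theorem applied to $R^1f_*\mathcal O_X(A-T)$ (note $A-T-(K_X+\{B_X\}+B^{=1}_X-T)\sim_{\mathbb Q,f}0$), and for the $Z'$-statement the codimension-one analysis of Lemma \ref{res-lem} with $A'=\lceil -(B_{X'}^{<1})+R\rceil$. Your opening reduction to exhibiting $f|_T$ as a (very) basic slc-trivial fibration is a detour (that is really the content of Corollary \ref{ind-cor}, which is deduced \emph{from} this connectedness statement, not the other way around), but it does no harm.

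The one place where your write-up is weaker than it needs to be is the final ``generic point plus $S_2$ bootstrap.'' You only claim the vanishing of the relevant higher direct image ``along the generic point of $Z$'' and then propose to upgrade via torsion-freeness of $(f|_T)_*\mathcal O_T$ and $S_2$-ness of $\mathcal O_Z$. Neither of those properties is free here: $Z$ is a non-normal union of lc centers, and components of $T$ may map onto proper closed subsets of components of $Z$, so $(f|_T)_*\mathcal O_T$ is not obviously torsion-free as an $\mathcal O_Z$-module. The paper avoids this entirely by using \cite{fujino-foundations}*{Theorem 5.6.3} at full strength: the support of any nonzero local section of $R^1f_*\mathcal O_X(A-T)$ must contain the $f$-image of some slc stratum of $(X,\{B_X\}+B^{=1}_X-T)$, and by construction no such stratum maps into $Z$; since the image of the connecting map $\delta$ is supported in $Z$, one gets $\delta=0$ globally, not just generically. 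One then identifies $f_*\mathcal O_X(A-T)=\mathcal I_Z$ (a step you gloss over) and reads off the isomorphism $\mathcal O_Z\simeq f_*\mathcal O_T(A|_T)$ everywhere on $Z$, which sandwiches $(f|_T)_*\mathcal O_T$. I would replace your step (a)--(b) by this direct support argument; with that change the proof is the paper's.
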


\begin{proof}
The proof is similar to \cite{fl-plt}*{Lemma 4.1 and Corollary 4.2} and \cite{liu}*{Lemma 2.2}. 
First, we show that  $ \mathcal O_{Z}\simeq(f|_{T})_*\mathcal O_{T}$.
Consider the following short exact sequence 
$$
0\to \mathcal O_X(A-T)\to \mathcal O_X(A)\to \mathcal O_T(A|_T)\to 0
$$ 
where $A=\lceil -(B^{<1}_X)\rceil$ and $A|_T=\lceil -(B^{<1}_X)|_T\rceil=\lceil -(B^{<1}_T)\rceil$.
Then we obtain the long exact sequence 
\begin{equation*}
\begin{split}
0 &\longrightarrow 
f_*\mathcal O_X(A-T)\longrightarrow f_*\mathcal O_X(A)\longrightarrow 
f_*\mathcal O_T(A|_T)
\\ 
&\overset{\delta}\longrightarrow R^1f_*\mathcal O_X(A-T)\longrightarrow \cdots.  
\end{split}
\end{equation*}
Note that 
$$
A-T-(K_X+\{B_X\}+B^{=1}_X-T)=-(K_X+B_X)\sim _{\mathbb Q, f}0. 
$$ 
By \cite{fujino-foundations}*{Theorem 5.6.3}, 
every nonzero local section of the sheaf 
$R^1f_*\mathcal O_X(A-T)$ contains 
in its support the $f$-image 
of some slc stratum of $(X, \{B_X\}+B^{=1}_X-T)$. 
By the construction of $T$, 
no slc stratum of $(X, \{B_X\}+B^{=1}_X-T)$ are mapped 
into $Z$ by $f$. 
On the other hand, the support of $f_*\mathcal O_T(A|_T)$ is 
contained in $Z=f(T)$. 
Therefore, the connecting homomorphism $\delta$ is a zero map. 
Thus we get a short exact sequence 
$$
0\to f_*\mathcal O_X(A-T)\to \mathcal O_Y\to f_*\mathcal O_T(A|_T)\to 0. 
$$ 
Since $f_*\mathcal O_X(A-T)$ is contained in $\mathcal O_Y$ and 
$f(T)=Z$, 
we have $f_*\mathcal O_X(A-T)=\mathcal I_Z$, where $\mathcal I_Z$ 
is the defining ideal sheaf of $Z$ on $Y$. 
Thus, by the above short exact sequence, 
we obtain that the natural 
map $\mathcal O_Z\to f_*\mathcal O_T(A|_T)$ is an isomorphism. 
It follows that the natural map $\mathcal O_Z\to (f|_{T})_*\mathcal O_T$ 
is an isomorphism. 

Next, we show that $\mathcal O_{Z'}\hookrightarrow (f'|_{T'})_*\mathcal O_{T'}$
is an isomorphism at every generic points of $Z'$.
Consider the following short exact sequence 
$$
0\to \mathcal O_{X'}(A'-T')\to \mathcal O_{X'}(A')\to \mathcal O_{T'}(A'|_{T'})\to 0
$$ 
where $A'=\lceil -(B_{X'}^{<1})+R\rceil$. 
Then we obtain the long exact sequence 
\begin{equation*}
\begin{split}
0 &\longrightarrow 
f'_*\mathcal O_{X'}(A'-T')\longrightarrow f'_*\mathcal O_{X'}(A')\longrightarrow 
f'_*\mathcal O_{T'}(A'|_{T'})
\\ 
&\overset{\delta'}\longrightarrow R^1f'_*\mathcal O_{X'}(A'-T')\longrightarrow \cdots.  
\end{split}
\end{equation*}
Note that 
$$
A'-T'-(K_X+\{B_{X'}^{<1}-R\}+B^{=1}_X-T')=-(K_X+B_X)+R\sim _{\mathbb Q, f'}f'^*(B^{<0}_{Y'})_{\eta}.
$$
Similarly as above,  $\delta'$ is a zero map in codimension one of $Y'$.
By Lemma \ref{res-lem} and the assumption that $Z'$ is a simple normal crossing divisor, 
we get a short exact sequence 
$$
0\to \mathcal I_{Z'}\to \mathcal O_{Y'}\to f'_*\mathcal O_{T'}(A'|_{T'})\to 0
$$ 
at every generic points of $Z'$. That is, 
$\mathcal O_{Z'} \hookrightarrow f'_*\mathcal O_{T'}(A'|_{T'})$
is an isomorphism outside a codimension one subset of $Z'$. It follows that
 $\mathcal O_{Z'} \hookrightarrow f'_*\mathcal O_{T'}$ is an isomorphism at every generic points of $Z'$.
\end{proof}

The following corollary is a generalisation of \cite{liu}*{Corollary 2.3}.
\begin{cor}\label{ind-cor}
Notation as in Corollary \ref{conn-cor}. Assume further that $Z''$ is a normal component of $Z'$. 
Let $T_1$ (resp. $T_2$) be the union of irreducible components of $T'$ dominant onto
(resp. mapping into) $Z''$.
Put $K_{T_i}+B_{T_i}=(K_X+B_X)|_{T_i}$ by adjunction for $i=1,2$. 
Then 
$$
\rank (f'|_{T_1})_*\mathcal O_{T_1}(\lceil -(B^{<1}_{T_1})\rceil)=1 \quad  \text{and} \quad 
\mathcal O_{Z''}\simeq (f'|_{T_1})_*\mathcal O_{T_1}.
$$ 
In particular, $f'|_{T_1}\colon (T_1, B_{T_1}) \to (Z'', D'|_{Z''})$ is a basic slc-trivial fibration.
\end{cor}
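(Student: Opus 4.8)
The strategy is to restrict the induced fibration $f'$ to $T_1$ and to check that $f'|_{T_1}\colon (T_1,B_{T_1})\to (Z'',D'|_{Z''})$ satisfies the conditions of Definition \ref{slc-def}; the two displayed identities are precisely condition $(5)$ and the relevant part of condition $(2)$, and I would verify them at the generic point $\eta$ of $Z''$ and then propagate. First I would set up the local picture near $\eta$. Since $Z'$ is a simple normal crossing divisor and $Z''$ is one of its irreducible components, $Z''$ is smooth, and over a small enough open neighbourhood $U$ of $\eta$ the divisor $Z'$ coincides with $Z''$; consequently the components of $T'$ which are not dominant onto $Z''$ are mapped by $f'$ into other components of $Z'$ or into proper closed subsets of $Z''$, hence into closed sets missing $\eta$, so $T'$, $T_2$ and $T_1$ all coincide over $f'^{-1}(U)$. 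Moreover $Z''$, being an lc center of $f'$ that is a prime divisor, is dominated by a component of $B^{=1}_{X'}$, so (by the implication ``$a_Q=1$ forces $t_P=1$'' recorded before Lemma \ref{res-lem}) $Z''$ occurs with coefficient $1$ in $B_{Y'}$; in particular $Z''\not\subseteq \Supp B^{<0}_{Y'}$, and after shrinking $U$ we may assume $R=0$ over $U$.

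For the connectedness $\mathcal O_{Z''}\simeq (f'|_{T_1})_*\mathcal O_{T_1}$: by Corollary \ref{conn-cor} the natural map $\mathcal O_{Z'}\hookrightarrow (f'|_{T'})_*\mathcal O_{T'}$ is an isomorphism at every generic point of $Z'$, in particular at $\eta$, which by the local picture reads that $\mathcal O_{Z''}\hookrightarrow (f'|_{T_1})_*\mathcal O_{T_1}$ is an isomorphism at $\eta$. Now $(f'|_{T_1})_*\mathcal O_{T_1}$ is a coherent sheaf of $\mathcal O_{Z''}$-algebras ($f'|_{T_1}$ being projective), it is torsion-free over $Z''$ because $T_1$ is reduced with every component dominating $Z''$, and it has rank one by the previous sentence; hence it is an $\mathcal O_{Z''}$-subalgebra of the constant sheaf $\mathbb C(Z'')$ containing $\mathcal O_{Z''}$. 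Since $Z''$ is normal, such a subsheaf has the form $\mathcal O_{Z''}(G)$ with $G\geq 0$, and being a subring forces $2G\leq G$, i.e. $G=0$; thus $\mathcal O_{Z''}\simeq (f'|_{T_1})_*\mathcal O_{T_1}$.

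For the rank-one condition I would run, over $U$, the short exact sequence used in the proof of Corollary \ref{conn-cor}: with $A'=\lceil -(B^{<1}_{X'})+R\rceil$, pushing $0\to \mathcal O_{X'}(A'-T')\to \mathcal O_{X'}(A')\to \mathcal O_{T'}(A'|_{T'})\to 0$ forward, using that the connecting homomorphism vanishes in codimension one of $Y'$ and that $f'_*\mathcal O_{X'}(A')\simeq\mathcal O_{Y'}$ by Lemma \ref{res-lem}, gives $\mathcal O_{Z''}\simeq f'_*\mathcal O_{T'}(A'|_{T'})$ at $\eta$. Over $U$ we have $R=0$ and $T'=T_1$, and restriction to a union of components of $B^{=1}_{X'}$ gives $A'|_{T_1}=\lceil -(B^{<1}_{T_1})\rceil$; since moreover $\lceil -(B^{<1}_{T_1})\rceil\geq 0$, so that $\mathcal O_{T_1}\hookrightarrow \mathcal O_{T_1}(\lceil -(B^{<1}_{T_1})\rceil)$, we conclude $\rank (f'|_{T_1})_*\mathcal O_{T_1}(\lceil -(B^{<1}_{T_1})\rceil)=1$. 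Finally, the ``in particular'' follows by listing Definition \ref{slc-def}: $(1)$ $Z''$ is a smooth variety; $(4)$ restrict $K_{X'}+B_{X'}\sim_{\mathbb Q}f'^*D'$ to $T_1$ and use adjunction; $(3)$ holds by adjunction from $(X',B_{X'})$ being sub-slc over $Y'$ along the components of $T_1\subseteq B^{=1}_{X'}$; $(2)$ is the two paragraphs above together with every stratum of the simple normal crossing divisor $T_1$ being dominant onto $Z''$, which holds after the normal crossing reductions of the running setup; and $(5)$ has just been shown.

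The step I expect to be the main obstacle is the bookkeeping in the rank-one verification: concretely, establishing that the correction divisor $R$ is irrelevant near $\eta$ (equivalently that $Z''$ occurs with coefficient $1$ in $B_{Y'}$, hence $Z''\not\subseteq\Supp B^{<0}_{Y'}$) and that the restriction $A'|_{T_1}$ really equals $\lceil -(B^{<1}_{T_1})\rceil$ there. Everything else is formal, given Corollary \ref{conn-cor}, Lemma \ref{res-lem} and the adjunction already exploited in Section \ref{sec3}.
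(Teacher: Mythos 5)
Your proof is correct and follows essentially the same route as the paper: localize at the generic point of $Z''$, use Corollary \ref{conn-cor} to get the isomorphism there, and use normality of $Z''$ to upgrade it to $\mathcal O_{Z''}\simeq (f'|_{T_1})_*\mathcal O_{T_1}$ and to read off the rank. The only difference is in packaging: where the paper passes through the Stein factorization and cites Claims 1 and 2 of \cite{fl-normal}, you argue directly (a finite birational algebra over a normal base is trivial, and a local rerun of the exact sequence from Corollary \ref{conn-cor} after observing that $Z''$ has coefficient $1$ in $B_{Y'}$, so $R$ is irrelevant near its generic point), which is a sound and somewhat more self-contained substitute.
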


\begin{proof}
Consider the following commutative diagram:
$$
\xymatrix{
T_1 \ar[d]_{g}\ar@{^(->}[r]^{\iota}& T_2\ar[d]^{f'}\\ 
\widetilde{Z} \ar[r]_{p}& Z''
}
$$
where $\iota \colon T_1\to T_2$ is the natural closed immersion 
and 
$$
\xymatrix{T_1 \ar[r]^{g}& \widetilde{Z} \ar[r]^p& Z''
}
$$ 
is the Stein factorization of $f'\circ \iota\colon T_1\to Z''$. 
By  Corollary \ref{conn-cor}, 
$ \mathcal O_{Z''} \hookrightarrow (f'|_{T_2})_*\mathcal O_{T_2}$ is an isomorphism
at the generic point of $Z''$.
Therefore, $\iota\colon T_1\to T_2$ is an isomorphism over the generic point of $Z''$.
In particular, $p$ is birational.
By \cite{fl-normal}*{Claim 1}, $p$ is the normalization.
Since $Z''$ is normal, $p$ is an isomorphism.
It follows that $\mathcal O_{Z''}\simeq (f'|_{T_1})_*\mathcal O_{T_1}$.
Note that
$$
(f'|_{T_1})_*\mathcal O_{T_1}(\lceil -(B^{<1}_{T_1})\rceil)
$$
is torsion-free, and isomorphic to $(f'|_{T_2})_*\mathcal O_{T_2}(\lceil -(B^{<1}_{T_2})\rceil$
at the generic point of $Z''$ by the same proof of 
\cite{fl-normal}*{Claim 2}.
It follows that  
$$
\rank (f'|_{T_1})_*\mathcal O_{T_1}(\lceil -(B^{<1}_{T_1})\rceil)=1.
$$
By Definition \ref{slc-def}, 
$f'|_{T_1}\colon (T_1, B_{T_1}) \to (Z'', D'|_{Z''})$ is a basic slc-trivial fibration.
\end{proof}

\begin{rem}
In general,
 $(f'|_{T_1})_*\mathcal O_{T_1}(\lceil -(B^{<1}_{T_1})\rceil)$
is not isomorphic to $\mathcal O_{Z''}$.
This is because the pushforward of $R$ on $Z''$ provides a gap between these two sheaves.
Therefore, $f'|_{T_1}$ is not necessary to be a very basic slc-trivial fibration.
\end{rem}

\begin{rem}\label{comm-rem}
In Corollary \ref{ind-cor}, the natural commutative diagram
$$
\xymatrix{
\mathcal O_{X'} \ar[d]_{f'}\ar[r]^{j}& \mathcal O_{T_1}\ar[d]^{f'|_{T_1}}\\ 
f'_*\mathcal O_{X'} \ar[r]_{j}&  f'_*\mathcal O_{T_1}
}
$$ 
induces a commutative diagram:
$$
\xymatrix{
\mathcal O_{X'} \ar[d]_{f'}\ar[r]^{j}& \mathcal O_{T_1}\ar[d]^{f'|_{T_1}}\\ 
\mathcal O_{Y'} \ar[r]_{j}&  \mathcal O_{Z''}
}
$$ 
Assume that a $\mathbb Q$-line bundle  $\varphi=0$ in $\Pic(X')\otimes \mathbb Q$.
Then $\varphi|_{T_1}=0$ in $\Pic(T_1)\otimes \mathbb Q$. By above  commutative  diagram,
$f'_*(\varphi):=\frac{1}{r}f'_*(r\varphi)=0$ in $\Pic(Y')\otimes \mathbb Q$
and 
$$
f'_*(\varphi)|_{Z''}=(f'|_{T_1})_*(\varphi|_{T_1})=0
$$ 
in $\Pic(Z'')\otimes \mathbb Q$
for some integer $r$ such that $r\varphi$ is a line bundle.
\end{rem}


\section{The numerically trivial moduli part}\label{sec-lemma}
Let $f\colon (X, B_X) \to (Y,D)$ be a very basic slc-trivial fibration
with the structure decomposition $D=K_Y+B_Y+M_Y$.
Assume that $M_Y$ is $\mathbb Q$-Cartier, equivalently, $K_Y+B_Y$ is  $\mathbb Q$-Cartier.
By Corollary \ref{cart-cor}, $(Y, B_Y)$ is log canonical.
Let $Z$ be a union of lc centers of $f$.
Then we have the following theorem, which is 
a generalisation of \cite{liu}*{Lemma 3.3}.

\begin{thm}\label{key thm}
If $M_Y|_Z\equiv 0$, then $M_Y|_Z\sim_{\mathbb Q}0$.
\end{thm}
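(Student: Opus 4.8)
The plan is to reduce, by a dlt blow‑up and adjunction to lc centers, to the case of a basic slc‑trivial fibration over a curve, where the moduli part is semi‑ample by \cite{fujino-fujisawa-liu}*{Corollary 1.4}. First I would apply Corollary \ref{dlt-cor} to take a $\mathbb{Q}$-factorial dlt blow‑up $p\colon Y'\to Y$ and replace $f$ by the induced very basic slc‑trivial fibration $f'\colon(X',B_{X'})\to(Y',D')$; then $M_{Y'}=p^{*}M_Y$ is a genuine nef $\mathbb{Q}$-Cartier divisor and $(Y',B_{Y'})$ is dlt with $B_{Y'}$ a boundary. Letting $Z'$ be the union of lc centers of $f'$ mapping into $Z$, after further blow‑ups (keeping the discriminant a boundary via Corollaries \ref{res-lem} and \ref{conn-cor}) I may assume that $Z'$ is a simple normal crossing divisor whose components are the strata of $\lfloor B_{Y'}\rfloor$ lying over $Z$, and that the associated union $T'\subseteq X'$ of strata of $B^{=1}_{X'}$ is simple normal crossing. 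Since the strict transform of each component of $Z$ is a component of $Z'$ mapping birationally onto it and $M_Y=p_{*}M_{Y'}$, it suffices to prove $M_{Y'}|_{Z'}\sim_{\mathbb{Q}}0$; so I rename and henceforth assume $(Y,B_Y)$ is $\mathbb{Q}$-factorial dlt, $M_Y$ is nef $\mathbb{Q}$-Cartier, $Z$ is a simple normal crossing union of lc centers, and $T\subseteq X$ is the associated simple normal crossing union of strata, with $(f|_T)_{*}\mathcal{O}_T\simeq\mathcal{O}_Z$ by Corollary \ref{conn-cor}.

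I would then induct on $\dim Z$, and, for fixed $\dim Z$, on the number of irreducible components of $Z$ (the case $\dim Z=0$ being trivial). Since the problem splits over the connected components of $Z$, I may assume $Z$ connected; if $Z$ is reducible I write $Z=Z_1\cup Z_2$ with $Z_1$ an irreducible component and $Z_2$ the union of the others, so $Z_1\cap Z_2\neq\emptyset$, and the Mayer--Vietoris sequence $0\to\mathcal{O}_Z\to\mathcal{O}_{Z_1}\oplus\mathcal{O}_{Z_2}\to\mathcal{O}_{Z_1\cap Z_2}\to0$ twisted by $mM_Y|_Z$ reduces $\mathbb{Q}$-linear triviality on $Z$ to the same on $Z_1$, on $Z_2$, and on $Z_1\cap Z_2$, the last being a union of lc centers of strictly smaller dimension and the connectedness from Corollary \ref{conn-cor} being used to match the trivializing rational functions along $Z_1\cap Z_2$. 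For $Z$ irreducible, hence normal as a component of a simple normal crossing divisor, Corollary \ref{ind-cor} gives an induced basic slc‑trivial fibration $f|_{T_1}\colon(T_1,B_{T_1})\to(Z,D|_Z)$, where $T_1$ is the union of the components of $T$ dominating $Z$; by the compatibility of adjunction with the canonical bundle formula its moduli part $M_Z$ satisfies $M_Z\sim_{\mathbb{Q}}M_Y|_Z$, so in particular $M_Z\equiv0$. If $\dim Z=1$ this finishes: $M_Z$ is semi‑ample by \cite{fujino-fujisawa-liu}*{Corollary 1.4}, and a semi‑ample numerically trivial $\mathbb{Q}$-divisor is $\mathbb{Q}$-linearly trivial. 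If $\dim Z\geq2$, I would restrict $f|_{T_1}$ over a general complete intersection curve $C\subseteq Z$ of very ample divisors to get a basic slc‑trivial fibration over $C$ with moduli part $M_Z|_C$, again semi‑ample by \cite{fujino-fujisawa-liu}*{Corollary 1.4} and numerically trivial, hence $M_Z|_C\sim_{\mathbb{Q}}0$; since $[M_Z]\in\Pic^{0}(Z)\otimes\mathbb{Q}$ and $\Pic^{0}(Z)\otimes\mathbb{Q}\to\Pic^{0}(C)\otimes\mathbb{Q}$ is injective for general such $C$ (a Lefschetz‑type statement, after passing to a resolution), we conclude $M_Z\sim_{\mathbb{Q}}0$, hence $M_Y|_Z\sim_{\mathbb{Q}}0$.

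I expect the main obstacle to be the compatibility of adjunction with the canonical bundle formula invoked above, i.e.\ identifying $M_Y|_Z$, up to $\mathbb{Q}$-linear equivalence, with the moduli part of the induced basic slc‑trivial fibration $f|_{T_1}\to Z$: one has to control both the $R$-term appearing in Corollary \ref{ind-cor} and the discrepancy between the different $(K_Y+B_Y)|_Z$ and the discriminant of $f|_{T_1}$, and this is precisely where one uses that $Z$ is a union of lc centers and that the fibration is very basic. A second delicate point is the gluing step in the induction: deducing $\mathbb{Q}$-linear triviality on the reducible $Z$ from that on its irreducible components and their intersections, where one must rule out a residual scalar (cohomological) obstruction along the overlaps; I would expect this to follow from the connectedness in Corollary \ref{conn-cor} together with the rank‑one condition in the definition of a basic slc‑trivial fibration. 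One should also check that every reduction used (the dlt blow‑up, the further blow‑ups, and the passage to the general curve $C$) preserves the basic slc‑trivial structure, which is ensured by Corollary \ref{res-lem} and \cite{fujino-slc-trivial}.
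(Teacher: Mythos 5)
Your reduction to a simple normal crossing $Z'$ and your treatment of a single irreducible component are close in spirit to the paper's Subsection \ref{subsec4.1}: the paper also restricts the fibration to each component $Z_i$, shows the restricted moduli part is numerically trivial, and concludes it is torsion (via Deligne's theorem of the fixed part applied to the Hodge-theoretic eigensheaf $\mathcal N_i$; your hyperplane-cutting/Lefschetz reduction to the curve case appears there verbatim as an alternative). However, the two points you defer to the "obstacles" paragraph are exactly where the content lies, and as written neither is closed. First, ``$M_Z\sim_{\mathbb Q}M_Y|_Z$'' is not a general compatibility of adjunction with the canonical bundle formula: the remark following Lemma \ref{inh-lem} points out that $B_{Z_i}\leq B_i$ can fail, so the induced moduli part need not agree with the restriction. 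What saves the statement here is the hypothesis itself: inversion of adjunction gives $B_i\leq B_{Z_i}$, hence an injection $\mathcal O_{Z_i}(M_{Z_i})\hookrightarrow \mathcal O_{Z_i}(M_Y)$, and since $M_Y|_{Z_i}\equiv 0$ while $M_{Z_i}$ is nef this injection is forced to be an equality. You must run this squeeze explicitly; without numerical triviality the claim you invoke is false.

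The more serious gap is the gluing step. Knowing $rM|_{Z_1}\simeq\mathcal O_{Z_1}$, $rM|_{Z_2}\simeq\mathcal O_{Z_2}$ and $rM|_{Z_1\cap Z_2}\simeq\mathcal O_{Z_1\cap Z_2}$ does not yield $rM|_Z\simeq\mathcal O_Z$: when $Z_1\cap Z_2$ is disconnected, or when $Z$ contains a cycle of components, the trivializing sections differ by distinct nonzero constants on different branches of the overlap and no global rescaling matches them. A cycle of rational curves carries non-torsion numerically trivial line bundles that are trivial on every component, and it is connected, so "connectedness in Corollary \ref{conn-cor} plus the rank-one condition" cannot by itself kill this $H^1(\mathcal O_Z)$-valued obstruction. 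This is precisely what the paper's canonical moduli part $\mathcal M^c_Y=\tau\otimes\mathcal O_Y(M_Y)$ is built to handle: it is a single sheaf on $Y$ carrying canonical trivializing data descended from the eigensheaf of the variation of mixed Hodge structure; Lemma \ref{inh-lem} is an honest equality of sheaves with their defining data (not merely a $\mathbb Q$-linear equivalence), and Corollary \ref{liu-lem} trivializes $(\mathcal M^c_{Z_i})^{\otimes r_i}$ canonically because the underlying local subsystem becomes constant. The component-wise trivializations therefore agree on overlaps by construction and patch to $(\mathcal M^c_Y)^{\otimes r}|_Z=\mathcal O_Z$. Your induction needs some substitute for this globally defined canonical trivialization; without one, the Mayer--Vietoris step does not go through.
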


For simplicity, we can assume that $Z$ is connected.
Take a log resolution  $g\colon Y' \to Y$ such that $\Supp g^{-1}(Z)$ is a 
simple normal crossing divisor on $Y'$. 
Let $D'=g^*D$ and $f'\colon (X',B_{X'})\to (Y', D')$ be the induced basic slc-trivial fibration
with the structure decomposition $D'=K_{Y'}+B_{Y'}+M_{Y'}$.
Let $Z'$ be the union of all lc centers of $f'$
mapping into $Z$.  Note that $Z'$ is contained in $B_{Y'}^{=1}$.
By Corollary \ref{ind-cor}, there is an induced basic slc-trivial fibration
on each smooth component of $Z'$.
Using the connectedness lemma on $g\colon Y' \to Y$, $Z'$ is connected and $g_*\mathcal O_{Z'}=\mathcal O_{Z}$.
As in Corollary \ref{dlt-cor}, $M_{Y'}+E=g^*M_Y$ where 
$E$ is an effective $g$-exceptional
$\mathbb Q$-divisor supported by $B_{Y'}^{<1}$. By assumption, $M_{Y'}|_{Z'}+E|_{Z'}\equiv 0$,
where $E|_{Z'}$ is an effective $\mathbb Q$-divisor on $Z'$. 
Since $M_{Y'}|_{Z'}$ is nef, it follows that
$E|_{Z'}=0$ and $M_{Y'}|_{Z'}\equiv 0$.
If we can show that $M_{Y'}|_{Z'}\sim_{\mathbb Q}0$, that is, 
$g^*\mathcal O_{Z}(rM_{Y})\simeq \mathcal O_{Z'}(rM_{Y'})\simeq \mathcal O_{Z'}$
for some positive integer $r$, then 
by projection formula,
$$
\mathcal O_{Z}\simeq g_* \mathcal O_{Z'}\simeq g_* g^*\mathcal O_{Z}(rM_Y)
\simeq g_*\mathcal O_{Z'}\otimes \mathcal O_{Z}(rM_Y) \simeq \mathcal O_{Z}(rM_Y).
$$
That is, $M_Y|_Z\sim_{\mathbb Q}0$.
 Therefore, in the rest of this section,
we will take log resolutions as high as we want by keeping Corollary \ref{ind-cor} in mind,
and prove Theorem \ref{key thm} from two different points of view.
Note that these two approaches are not totally separated.

\subsection{The viewpoint of trivial fibrations}\label{subsec4.1}

We can further assume that the morphism 
$f\colon (X, B_X) \to (Y,D)$
satisfies the following conditions (a)--(g),
stated also in \cite{fujino-slc-trivial}*{Proposition 6.3},  \cite{fujino-fujisawa-liu}*{Section 5}
and \cite{liu}*{Lemma 3.3}:
\begin{itemize}
\item[(a)] $Y$ is smooth and $X$ is simple normal crossing,
\item[(b)] $f$ is a projective surjective morphism,
\item[(c)] $\Sigma_X$ and $\Sigma_Y$ are simple normal crossing divisors on $X$ and $Y$ respectively,
\item[(d)] $B_X$ and $B_Y$,  $M_Y$ are supported by $\Sigma_X$ and $\Sigma_Y$ respectively,
\item[(e)] every stratum of $(X, \Sigma^h_X)$ is smooth over $Y^*:=Y\setminus \Sigma_Y$,
\item[(f)] $f^{-1}(\Sigma_Y)\subset \Sigma_X$, $f(\Sigma^v_X)\subset \Sigma_Y$,
\item[(g)] $(B^h_X)^{=1}$ is Cartier. 
\end{itemize} 
Let $Z:=B_Y^{=1}=\sum Z_i$ where $Z_i$ is an irreducible component of $Z$ for every $i$.
Let $T_i$ be the union of components of $B_X^{=1}$ 
dominant onto $Z_i$.
As stated in Remark \ref{rem1.2}, it is less confusing by using $\mathbb Q$-line bundle.
That is, we have an isomorphism in $\Pic (X) \otimes \mathbb Q$
$$
\varphi\colon f^*\mathcal D=f^*\mathcal O_Y(K_Y+B_Y+M_Y) \to  \mathcal O_X(K_X+B_X).
$$
Note that we can view $\varphi$ as a $\mathbb Q$-line bundle 
($=(f^*\mathcal D)^*\otimes \mathcal O_X(K_X+B_X)$) such that
$\varphi=0$ in $\Pic(X)\otimes \mathbb Q$ (see Remark \ref{comm-rem}).
By restricting sheaves on $T_i$ and adjunction, we have an induced isomorphism
$$
\varphi_i\colon f^*\mathcal D_i \to \mathcal O_{T_i}(K_{T_i}+B_{T_i})
$$
where $\varphi_i=\varphi|_{T_i}$ 
and $\mathcal D_i=\mathcal D|_{Z_i}$ for every $i$.
By Corollary \ref{ind-cor}, $f|_{T_i}\colon (T_i, B_{T_i}) \to (Z_i, \mathcal D_i)$
is a basic slc-trivial fibration with the structure decomposition
$$
\mathcal D_i=\mathcal O_{Z_i}(K_{Z_i}+B_{Z_i}+M_{Z_i})
$$
for every $i$. 
Let $\tau:=f_*(\varphi)$ and $\tau_i:=(f|_{T_i})_*(\varphi_i)$ as in Remark \ref{comm-rem}.

\begin{lem}\label{inh-lem}
Assume that $M_Y|_{Z_i}\equiv 0$. Then
$\tau_i\otimes\mathcal O_{Z_i}(M_{Z_i})=(\tau\otimes\mathcal O_Y(M_Y))|_{Z_i}$.
\end{lem}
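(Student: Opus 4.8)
The plan is to split the asserted identity, which lives in $\Pic(Z_i)\otimes\mathbb Q$, into two parts: (i) a compatibility of the pushed‑forward $\mathbb Q$‑line bundles $\tau,\tau_i$ under restriction, and (ii) the identity $M_{Z_i}=M_Y|_{Z_i}$ of $\mathbb Q$‑divisors on $Z_i$; and to obtain (ii) by comparing two adjunction computations together with nefness. For (i) I would simply quote Remark \ref{comm-rem}: the natural commutative square there shows that restriction to $T_i$ (resp.\ $Z_i$) commutes with $f_*$ (resp.\ $(f|_{T_i})_*$), so $\tau|_{Z_i}=(f_*\varphi)|_{Z_i}=(f|_{T_i})_*(\varphi|_{T_i})=\tau_i$ in $\Pic(Z_i)\otimes\mathbb Q$. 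Hence $(\tau\otimes\mathcal O_Y(M_Y))|_{Z_i}=\tau_i\otimes\mathcal O_{Z_i}(M_Y|_{Z_i})$, and the Lemma reduces to $\mathcal O_{Z_i}(M_{Z_i})=\mathcal O_{Z_i}(M_Y|_{Z_i})$ in $\Pic(Z_i)\otimes\mathbb Q$.

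For (ii) I would compare the two structure decompositions over $Z_i$. Under the conditions (a)--(g) the variety $Y$ is smooth, $B_Y$ is supported on the simple normal crossing divisor $\Sigma_Y$, and $Z_i$ is a smooth prime divisor on $Y$, so ordinary adjunction gives $(K_Y+B_Y)|_{Z_i}=K_{Z_i}+(B_Y-Z_i)|_{Z_i}$ with $(B_Y-Z_i)|_{Z_i}$ effective. On the other hand, by Corollary \ref{ind-cor} the induced fibration $f|_{T_i}\colon(T_i,B_{T_i})\to(Z_i,\mathcal D_i)$ is a basic slc-trivial fibration with structure decomposition $\mathcal D_i=\mathcal O_{Z_i}(K_{Z_i}+B_{Z_i}+M_{Z_i})$, while $\mathcal D_i=\mathcal D|_{Z_i}=\mathcal O_{Z_i}((K_Y+B_Y+M_Y)|_{Z_i})$. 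Subtracting, in $\Pic(Z_i)\otimes\mathbb Q$ one obtains
$$
\mathcal O_{Z_i}(M_{Z_i})=\mathcal O_{Z_i}(M_Y|_{Z_i})\otimes\mathcal O_{Z_i}(N),\qquad N:=(B_Y-Z_i)|_{Z_i}-B_{Z_i},
$$
a fixed $\mathbb Q$-divisor on the smooth variety $Z_i$, so it suffices to prove $N=0$. First I would show $-N\geq 0$, i.e.\ that passing to the lc center does not decrease the discriminant, $B_{Z_i}\geq(B_Y-Z_i)|_{Z_i}$; this is an adjunction inequality, verified componentwise by comparing the slc threshold defining $\mult_P B_{Z_i}$ through $(T_i,B_{T_i})$ with the contribution of the components of $B_Y$ through $P\subset Z_i$, using the different on $T_i\subset X$ and the simple normal crossing hypotheses.

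To conclude I would invoke Proposition \ref{moduli-nef} applied to $f|_{T_i}$: $M_{Z_i}$ is nef, while $M_Y|_{Z_i}\equiv 0$ by hypothesis, so $N\equiv M_{Z_i}$ is numerically nef. Fixing an ample divisor $H$ on the smooth projective variety $Z_i$, the effective divisor $-N$ satisfies $(-N)\cdot H^{\dim Z_i-1}\geq 0$ and also $(-N)\cdot H^{\dim Z_i-1}=-M_{Z_i}\cdot H^{\dim Z_i-1}\leq 0$, hence $(-N)\cdot H^{\dim Z_i-1}=0$, which forces $N=0$. Then $\mathcal O_{Z_i}(M_{Z_i})=\mathcal O_{Z_i}(M_Y|_{Z_i})$, and tensoring with $\tau_i$ and using (i) gives $\tau_i\otimes\mathcal O_{Z_i}(M_{Z_i})=(\tau\otimes\mathcal O_Y(M_Y))|_{Z_i}$, as required. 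The step I expect to be the main obstacle is the discriminant inequality $B_{Z_i}\geq(B_Y-Z_i)|_{Z_i}$: everything else is formal bookkeeping, but this genuinely requires controlling the induced fibration over the center $Z_i$, which lies in the locus $\Sigma_Y$ where (a)--(g) guarantee nothing about the fibers, and without the correct direction of this inequality the concluding nefness argument would break down.
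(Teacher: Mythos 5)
Your proposal is correct and follows essentially the same route as the paper: reduce via Remark \ref{comm-rem} to comparing $\mathcal O_{Z_i}(M_{Z_i})$ with $\mathcal O_{Z_i}(M_Y)$, establish the discriminant inequality $(B_Y-Z_i)|_{Z_i}\leq B_{Z_i}$ (the paper gets this by inversion of adjunction, noting that sub-slc near the generic points of $P_i$ implies sub-slc at the generic point of $P$), and then conclude from the fact that $M_{Z_i}$ is nef while numerically equivalent to the anti-effective divisor $N$, forcing $N=0$. You correctly identified both the decisive step and the direction of the inequality, so the only difference from the paper is cosmetic (intersecting with $H^{\dim Z_i-1}$ instead of observing that the injection $\mathcal O_{Z_i}(M_{Z_i})\hookrightarrow\mathcal O_{Z_i}(M_Y)$ must be the identity).
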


\begin{proof}
By Remark \ref{comm-rem}, 
$(\tau\otimes \mathcal O_Y(M_Y))|_{Z_i}=\tau_i\otimes
\mathcal O_{Z_i}(M_Y)$.
Therefore, it suffices to show that $\mathcal O_{Z_i}(M_Y)=\mathcal O_{Z_i}(M_{Z_i})$.
By adjunction, we have 
$$
\mathcal O_{Z_i}(K_{Z_i}+B_{Z_i}+M_{Z_i})=\mathcal O_Y(K_Y+B_Y+M_Y)|_{Z_i}
=\mathcal O_{Z_i}(K_{Z_i}+B_Y-Z_i+M_Y).
$$
Let $B_i=(B_Y-Z_i)|_{Z_i}$.  
Note that $\Supp B_Y$ is simple normal crossing on $Y$ and  $\Supp B_i$
is simple normal crossing on $Z_i$.
Let $P$ be a prime divisor in $\Supp (B_Y-Z_i)$.
Then $P_i:=P|_{Z_i}$ is a union of prime divisors.
Let $t_P$ be a number such that $ (T_i, B_{T_i}+t_P(f|_{T_i})^*P_i)$ is sub-slc
over the generic points of $P_i$.
Then by inversion of adjunction, $(X, B_{X}+t_Pf^*P)$ is sub-slc at around the generic points of $P_i$.
This implies that
$$
B_i\leq B_{Z_i}
$$ 
by the definition of the discriminant part.
Therefore, there is a natural injective morphism
$$
\alpha\colon \mathcal O_{Z_i}(M_{Z_i})\hookrightarrow \mathcal O_{Z_i}(M_Y).
$$
Since $M_Y|_{Z_i}\equiv 0$ and $M_{Z_i}$ is nef, $\alpha$ has to be an identity.
\end{proof}

\begin{rem}
Note that the symbol ``$=$'' for two sheaves in Lemma \ref{inh-lem} and the rest of this paper does
not only mean to be equal in the $\mathbb Q$-Picard group, but also
 mean that the 
defining Cartier divisors in both sides are exactly the same, equivalently, the principle divisor of the difference
is given by some constant function.
\end{rem}

\begin{rem}
In general, another direction that $ B_{Z_i}\leq  B_i$ is not necessarily true.
This is because the discriminant part $B_Y$ is defined by the slc threshold 
over the generic point of $P$ on $Y$, 
which is not necessarily the slc threshold over $Z_i\cap P$. 
But in this case, we can take further blowing ups at $Z_i\cap P$
and consider the new relationship between $ B_{Z_i}$ and $B_i$.
We won't go into these details in this paper since Lemma \ref{inh-lem} is sufficient.
\end{rem}

By the inheriting nature showed in Lemma \ref{inh-lem}, 
we give the following definition:

\begin{defn}\label{can-mod}
Notation as above.
$\tau\otimes \mathcal O_Y(M_Y)$ is called the {\em{canonical moduli part}} of $f$
and denote it as $\mathcal M^c_Y$.
\end{defn}

\begin{rem}
Note that the canonical moduli part is defined only on a sufficiently high model of $(Y, D)$.
Actually, it is more convenient to define it as a b-divisor (cf. \cite{fujino-slc-trivial}*{Definition 2.10})
by considering $\tau$ as a b-divisor.
\end{rem}

By  \cite{fujino-slc-trivial}*{Lemma 7.3, Theorem 8.1}, for every $i$,
there exists a finite surjective morphism $\pi_i\colon W_i \to Z_i$ (unipotent reduction)
and an induced pre-basic slc-trivial fibration
$f'_i\colon (T'_i, B_{T'_i}) \to (W_i, \pi_i^*\mathcal D_i)$
such that 
$$
\pi_i^*\varphi_i\colon f'^*_i\pi_i^*\mathcal D_i \to \mathcal O_{T'_i}(K_{T'_i}+B_{T'_i}) 
$$
is an induced natural isomorphism.
Moreover, by \cite{fujino-slc-trivial}*{Proposition 6.3, Theorem 8.1}, 
the $\mathbb Q$-divisor $M_{W_i}:=\pi_i^* M_{Z_i}$ is Cartier and 
 $\pi_i^*\mathcal D_i=\mathcal O_{W_i}(K_{W_i}+B_{W_i}+M_{W_i})$ is the structure decomposition.
Let $\mathcal N_i$ be the eigensheaf defined in \cite{fujino-slc-trivial}*{(6.11)}.
By the Claim in the proof of \cite{fujino-slc-trivial}*{Proposition 6.3},
we have that

\begin{lem}[\cite{fujino-slc-trivial}*{Proposition 6.3}]\label{fujino-lem}
$\pi_i^*\mathcal M^c_{Z_i}=\mathcal M^c_{W_i}= \mathcal N_i$.
\end{lem}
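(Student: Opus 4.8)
The plan is to establish the two equalities separately. The identification $\pi_i^*\mathcal M^c_{Z_i}=\mathcal M^c_{W_i}$ will be a base-change compatibility: the canonical moduli part is, by Definition \ref{can-mod}, the pushforward of the trivialising $\mathbb Q$-line bundle twisted by the moduli divisor, and both operations are compatible with the unipotent reduction $\pi_i$. The identification $\mathcal M^c_{W_i}=\mathcal N_i$ will then be extracted directly from the Hodge-theoretic analysis in \cite{fujino-slc-trivial}*{Section 6}; in fact it is precisely the Claim established inside the proof of \cite{fujino-slc-trivial}*{Proposition 6.3}, once our notation is matched to his.

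First I would unwind the definitions. Applying Definition \ref{can-mod} to the basic slc-trivial fibration $f|_{T_i}\colon (T_i,B_{T_i})\to (Z_i,\mathcal D_i)$ gives $\mathcal M^c_{Z_i}=\tau_i\otimes\mathcal O_{Z_i}(M_{Z_i})$, and the same recipe applied to the induced pre-basic slc-trivial fibration $f'_i\colon (T'_i,B_{T'_i})\to (W_i,\pi_i^*\mathcal D_i)$ gives $\mathcal M^c_{W_i}=\tau'_i\otimes\mathcal O_{W_i}(M_{W_i})$ with $\tau'_i=(f'_i)_*(\pi_i^*\varphi_i)$. Since $M_{W_i}=\pi_i^*M_{Z_i}$ by construction, it remains to check $\pi_i^*\tau_i=\tau'_i$. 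Here $T'_i$ is built from $T_i\times_{Z_i}W_i$ by normalisation and a modification, and $\pi_i^*\varphi_i$ is literally the pullback of $\varphi_i$; since $(f|_{T_i})_*\mathcal O_{T_i}(\lceil -(B^{<1}_{T_i})\rceil)$ and $(f'_i)_*\mathcal O_{T'_i}(\lceil -(B^{<1}_{T'_i})\rceil)$ are rank-one torsion-free sheaves by Corollary \ref{ind-cor}, I would compare $\pi_i^*\tau_i$ and $\tau'_i$ at the generic point of $W_i$, where $\pi_i$ induces an isomorphism and both sides are the image of the same global rational section. This settles the first equality.

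For the second equality I would recall that $\mathcal N_i$ is the eigensheaf of \cite{fujino-slc-trivial}*{(6.11)}, the summand of the relevant canonical-extension bundle on which the (now unipotent on $W_i$) monodromy acts in the prescribed way, and that by \cite{fujino-slc-trivial}*{Proposition 6.3} the divisor $M_{W_i}$ is Cartier with structure decomposition $\pi_i^*\mathcal D_i=\mathcal O_{W_i}(K_{W_i}+B_{W_i}+M_{W_i})$. The Claim in the proof of \cite{fujino-slc-trivial}*{Proposition 6.3} then identifies $\mathcal N_i$ with the line bundle obtained by pushing forward $\pi_i^*\varphi_i$ and twisting by $\mathcal O_{W_i}(M_{W_i})$, that is, with $\mathcal M^c_{W_i}$. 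The main thing to verify, and the step I expect to be the real (if routine) obstacle, is that the conventions in force there, namely conditions (a)--(g) above, Cartier-ness of $(B^h_X)^{=1}$, and the choice of the rational function $\varphi$ used to normalise the moduli part, are exactly the ones we have arranged for $T'_i\to W_i$, so that the Claim applies verbatim; one should also note that the exceptional divisor $R$ of Lemma \ref{res-lem} contributes nothing here, since $Z'$, and hence each $Z_i$, lies in $B^{=1}_{Y'}$. Combining the two equalities gives the lemma.
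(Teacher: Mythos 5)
Your proposal is correct and follows essentially the same route as the paper, which offers no independent argument at all: the lemma is stated as a direct consequence of the Claim inside the proof of \cite{fujino-slc-trivial}*{Proposition 6.3}, with the structure decomposition $\pi_i^*\mathcal D_i=\mathcal O_{W_i}(K_{W_i}+B_{W_i}+M_{W_i})$ and $M_{W_i}=\pi_i^*M_{Z_i}$ likewise imported from {\em{loc. cit.}} together with \cite{fujino-slc-trivial}*{Theorem 8.1}. Your additional verification that $\pi_i^*\tau_i=\tau_i'$ (which the paper leaves implicit) is the right routine check, though the comparison of the two rank-one torsion-free sheaves should be made in codimension one rather than only at the generic point, consistent with the paper's convention that ``$=$'' means equality of the defining Cartier divisors.
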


By the assumption and Lemma \ref{inh-lem}, 
$\mathcal M^c_{W_i}\equiv 0$. Then in this case, we have that

\begin{lem}\label{cons-lem}
$\mathcal N_i^{\otimes r_i}=\mathcal O_{W_i}$ for some positive integer $r_i$.
\end{lem}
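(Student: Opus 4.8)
The plan is to show that the eigensheaf $\mathcal N_i$ is a torsion element of $\Pic(W_i)$ by combining its numerical triviality with the fact that it underlies a polarized variation of (mixed) Hodge structure coming from the geometry of $f'_i\colon (T'_i,B_{T'_i})\to (W_i,\pi_i^*\mathcal D_i)$. The starting point is Lemma \ref{fujino-lem}, which identifies $\mathcal N_i$ with $\mathcal M^c_{W_i}=\pi_i^*\mathcal M^c_{Z_i}$, together with Lemma \ref{inh-lem} and the hypothesis $M_Y|_{Z_i}\equiv 0$, which give $\mathcal N_i\equiv 0$. I would first observe that $\mathcal N_i$ is not an arbitrary numerically trivial line bundle: by the construction in \cite{fujino-slc-trivial}, after the unipotent reduction $\pi_i$ the local system associated to the relevant lowest-weight piece of the variation of mixed Hodge structure on $R^{\dim T'_i - \dim W_i}$-type cohomology with compact support has \emph{unipotent local monodromy} along the boundary $\Sigma_{W_i}$, and $\mathcal N_i$ is (a rank-one summand of) the bottom Hodge filtration piece, i.e. it carries a canonical Hermitian metric with semipositive curvature whose curvature current represents $c_1(\mathcal N_i)$.

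\textbf{Key steps.} First I would pass to the smooth locus $W_i^*=W_i\setminus \Sigma_{W_i}$ and recall that $\mathcal N_i|_{W_i^*}$ underlies a rank-one sub-VHS; since it is a rank-one polarized VHS, its monodromy representation $\rho\colon \pi_1(W_i^*)\to \GL_1(\mathbb C)=\mathbb C^*$ has image in the unitary group $U(1)$. Second, because of the unipotent reduction, the local monodromies around the components of $\Sigma_{W_i}$ are unipotent and unitary, hence trivial; therefore $\rho$ factors through $\pi_1(W_i)$ (this is the standard extension-of-the-local-system argument, cf. Deligne's canonical extension being the trivial extension when monodromy is trivial), and $\mathcal N_i$ is the line bundle associated to a unitary character of $\pi_1(W_i)$ of a \emph{projective} variety $W_i$. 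Third, I would invoke the fact that a line bundle given by a unitary character is numerically trivial, and conversely that the numerically trivial line bundles arising this way that are also torsion are exactly those whose character has finite order; here the hypothesis $\mathcal N_i\equiv 0$ is automatic, so the content is to upgrade ``unitary character'' to ``finite-order character.'' This follows because $\mathcal N_i$ comes from a \emph{polarizable} $\mathbb Z$- (or $\mathbb Q$-) variation of Hodge structure: the monodromy group preserves a $\mathbb Z$-lattice, so the image of $\rho$ lies in $\{\pm 1\}$ (a subgroup of $U(1)$ preserving a rank-one lattice), whence $\mathcal N_i^{\otimes 2}=\mathcal O_{W_i}$ on $W_i^*$ and then on all of $W_i$ by normality; in any case $\mathcal N_i^{\otimes r_i}=\mathcal O_{W_i}$ for some $r_i$ (one may alternatively cite \cite{fujino-fujisawa-liu}*{Corollary 1.4} or the $\dim Z_i=1$ semi-ampleness, but the argument above works uniformly).

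\textbf{Main obstacle.} The delicate point is the passage from the \emph{analytic} statement that $\mathcal N_i$ is associated to a finite-order unitary character on the smooth locus $W_i^*$ to the \emph{algebraic/global} statement $\mathcal N_i^{\otimes r_i}\simeq\mathcal O_{W_i}$ as line bundles on all of $W_i$, including the boundary. This requires knowing that $\mathcal N_i$ is exactly the (lower) canonical extension of the flat unitary bundle across $\Sigma_{W_i}$ — which is where the unipotent-reduction hypothesis and Fujino's construction in \cite{fujino-slc-trivial}*{(6.11)} are essential, since for unipotent (here trivial) local monodromy the canonical extension is the naive trivial extension, so $\mathcal N_i$ extends the trivial flat bundle and hence $\mathcal N_i^{\otimes r_i}$ extends $\mathcal O_{W_i^*}$; normality of $W_i$ and the $S_2$ property then force $\mathcal N_i^{\otimes r_i}\simeq\mathcal O_{W_i}$. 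I expect most of the write-up to consist of quoting \cite{fujino-slc-trivial}*{Proposition 6.3} and its proof carefully and checking that the numerical-triviality hypothesis rules out the only other possibility (a nontrivial but torsion character would still give the conclusion, so the real dichotomy is trivial-on-$W_i^*$ versus not, and numerical triviality plus the integral structure collapses it).
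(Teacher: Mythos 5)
Your overall route is the same as the paper's first proof: show that $\mathcal N_i$ restricted to $W_i^*=W_i\setminus\Sigma_{W_i}$ is a rank-one local subsystem of a polarizable variation of Hodge structure, deduce that its monodromy character has finite order, and extend across $\Sigma_{W_i}$ using that the canonical extension of a trivial flat bundle is trivial. The paper compresses all of this into two citations — the proof of \cite{fujino-fujisawa-liu}*{Theorem 1.3} for the flatness, and \cite{deligne}*{Corollaire (4.2.8) (iii) b)} for the finiteness of the character — and also offers an alternative: cut by general hyperplanes to reduce to $\dim W_i=1$ and quote \cite{liu}*{Lemma 3.3}. Two points in your write-up need repair.

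First, the step ``recall that $\mathcal N_i|_{W_i^*}$ underlies a rank-one sub-VHS'' is precisely where the hypothesis $\mathcal M^c_{W_i}\equiv 0$ is consumed, and you present it as a recollection rather than a deduction; your remark that ``the hypothesis $\mathcal N_i\equiv 0$ is automatic'' inverts the logic. A priori $\mathcal N_i$ is only a holomorphic line subbundle of the lowest Hodge piece, whose Hodge metric has semipositive curvature (this is why $M_{W_i}$ is nef); one must argue that numerical triviality forces the curvature to vanish identically, hence that the second fundamental form vanishes and $\mathcal N_i|_{W_i^*}$ is flat. Without this, $\mathcal N_i$ is just a nef line bundle and nothing follows. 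Second, your claim that the monodromy character lands in $\{\pm 1\}$ because it ``preserves a rank-one lattice'' is false: $\mathcal N_i$ is by construction an eigensheaf for the cyclic covering group with a root-of-unity eigenvalue, so the line $\mathcal N_i\otimes\mathbb C\subset\mathcal H_{\mathbb C}$ is in general not defined over $\mathbb Q$, let alone over $\mathbb Z$, and carries no rank-one integral lattice. What is true — and this is exactly Deligne's Corollaire (4.2.8) (iii) b) — is that a rank-one local direct factor of a polarizable $\mathbb Q$-VHS has finite-order character: the character value is an eigenvalue of an element of $\GL(H_{\mathbb Z})$, hence an algebraic integer all of whose conjugates have absolute value one (by unitarity of all the conjugate rank-one factors), hence a root of unity by Kronecker. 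You gesture at the right conclusion (``in any case \dots for some $r_i$'') but the argument you actually give for it does not work; replace it by the citation to Deligne, as the paper does. With those two corrections your extension argument across $\Sigma_{W_i}$ is fine.
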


\begin{proof}
From the proof of \cite{fujino-fujisawa-liu}*{Theorem 1.3}, we see that
$\mathcal N_i$ is a canonical extension of a local subsystem of a polarizable 
variation of $\mathbb Q$-Hodge structures (see also Lemma \ref{mod-hodge} below). 
By \cite{deligne}*{Corollaire (4.2.8) (iii) b)},
there is a positive integer $r_i$ such that  $\mathcal N_i^{\otimes r_i}$ is constant.
It follows that $\mathcal N_i^{\otimes r_i}=\mathcal O_{W_i}$.
Or in another way, we can cut $W_i$ by general hyperplanes and reduce to the case that $W_i$ is a curve.
More precisely, for a general  hyperplane $H_i \subset W_i$,
$$
\mathcal N_i^{\otimes r_i}|_{H_i}=\mathcal O_{H_i} \quad \text{if and only if} \quad
\mathcal N_i^{\otimes r_i}=\mathcal O_{W_i}
$$
since $H_i$ is general.
So it suffices to prove the lemma for $\dim W_i=1$, which is \cite{liu}*{Lemma 3.3}.
\end{proof}

Combining Lemma \ref{fujino-lem} and Lemma \ref{cons-lem}, we immediately get that

\begin{cor}\label{liu-lem}
There exists a positive integer $r_i$ such that $(\mathcal M^c_{Z_i})^{\otimes r_i}=\mathcal O_{Z_i}$.
\end{cor}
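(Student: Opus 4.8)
The plan is to combine Lemma \ref{fujino-lem} with Lemma \ref{cons-lem} and then descend the triviality along the finite cover $\pi_i\colon W_i\to Z_i$. By Lemma \ref{fujino-lem} we have $\pi_i^*\mathcal M^c_{Z_i}=\mathcal N_i$ in $\Pic(W_i)\otimes\mathbb Q$, and by Lemma \ref{cons-lem} there is a positive integer $r_i$ with $\mathcal N_i^{\otimes r_i}=\mathcal O_{W_i}$; hence $\pi_i^*\bigl((\mathcal M^c_{Z_i})^{\otimes r_i}\bigr)=\mathcal N_i^{\otimes r_i}=\mathcal O_{W_i}$. What then remains is to conclude that $(\mathcal M^c_{Z_i})^{\otimes r}=\mathcal O_{Z_i}$ for a suitable multiple $r$ of $r_i$, i.e. that the pullback map $\pi_i^*\colon\Pic(Z_i)\otimes\mathbb Q\to\Pic(W_i)\otimes\mathbb Q$ does not kill the class of $\mathcal M^c_{Z_i}$.

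For the descent I would argue as follows. After the reductions of Subsection \ref{subsec4.1} the base $Y$ is smooth and $Z_i$ is a smooth component of $B_Y^{=1}$, so $Z_i$ is a smooth variety and it suffices to check injectivity of $\pi_i^*$ on $\Pic\otimes\mathbb Q$ in codimension one on $Z_i$. Over the dense open locus where $\pi_i$ is flat, $\pi_{i*}\mathcal O_{W_i}$ is locally free of rank $d:=\deg\pi_i$, and the projection formula together with taking determinants gives, for any line bundle $L$, the identity $N_{\pi_i}(\pi_i^*L):=\det\pi_{i*}(\pi_i^*L)\otimes(\det\pi_{i*}\mathcal O_{W_i})^{-1}=L^{\otimes d}$; thus $N_{\pi_i}\circ\pi_i^*$ is multiplication by $d$ and $\pi_i^*$ is injective on $\Pic\otimes\mathbb Q$. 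Applying this to some positive multiple $L$ of $\mathcal M^c_{Z_i}$ that is an honest line bundle (such a multiple exists since $M_{Z_i}$ is a $\mathbb Q$-divisor and $\tau$ a $\mathbb Q$-line bundle), and combining with $\pi_i^*\bigl((\mathcal M^c_{Z_i})^{\otimes r_i}\bigr)=\mathcal O_{W_i}$, one gets $(\mathcal M^c_{Z_i})^{\otimes r_id}=\mathcal O_{Z_i}$, which is the corollary with $r_i$ replaced by $r_id$.

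The only point that takes a little care is the descent step, and even there the work is minimal: because $Z_i$ is smooth one may freely discard closed subsets of codimension $\geq 2$, so the non-flat locus of $\pi_i$ causes no trouble and the norm-map relation $N_{\pi_i}\circ\pi_i^*=d$ is available. Everything else is a direct quotation of Lemma \ref{fujino-lem} and Lemma \ref{cons-lem}, so the corollary indeed follows immediately.
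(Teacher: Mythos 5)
Your proposal is correct and takes essentially the same route as the paper, which simply combines Lemma \ref{fujino-lem} and Lemma \ref{cons-lem} and treats the descent along the finite surjective cover $\pi_i$ as immediate; your norm-map argument ($N_{\pi_i}\circ\pi_i^*=\deg\pi_i$ on $\Pic\otimes\mathbb Q$, checked in codimension one on the smooth $Z_i$) is a valid way to supply that step. The only nuance is that the paper's ``$=$'' means strict equality of the defining Cartier divisors (see the remark following Lemma \ref{inh-lem}), for which the descent is even more direct: if $\Delta$ is the $\mathbb Q$-Cartier divisor defining $(\mathcal M^c_{Z_i})^{\otimes r_i}$ and $\pi_i^*\Delta=0$, then $\Delta=\tfrac{1}{d}\pi_{i*}\pi_i^*\Delta=0$ with $d=\deg\pi_i$.
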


Then by Lemma \ref{inh-lem} and Corollary \ref{liu-lem}, 
there exists a positive integer $r$ such that
\begin{equation}\label{eq4.1}
(\mathcal M^c_{Y})^{\otimes r}|_{Z_i}=\mathcal O_{Z_i}
\end{equation}
for every $i$. Therefore, $(\mathcal M^c_{Y})^{\otimes r}|_{Z}=\mathcal O_{Z}$ by patching together.
This proves Theorem \ref{key thm}.

\subsection{The viewpoint of period maps}

Let $f\colon (X, B_X) \to (Y,D)$ be a basic slc-trivial fibration
satisfying the conditions (a)--(g) in above subsection
and $\pi\colon W\to Y$ be the unipotent reduction with the induced pre-basic slc-trivial fibration
$h \colon (V, B_{V}) \to (W, \pi^*D)$.
Let $\Sigma_W:=\pi^{-1}(\Sigma_Y)$ and $\Sigma_V:=h^{-1}(\Sigma_{W})$.
 Replacing $h$ by a
cyclic cover of the generic fiber of $h$ (denote the cyclic cover group as $G$) and a further resolution 
(see \cite{fujino-slc-trivial}*{Section 6}), we get that 
$$
\mathcal V_{0}:=R^{\dim V-\dim W}(h|_{V_0})_*\iota_! \mathbb Q_{V_0\setminus (B^h_{V_0})^{=1}}
$$
underlies a graded polarizable admissible variation of $\mathbb Q$-mixed Hodge structure on $W_0$
which is unipotent at around $\Sigma_{W}$, where 
$V_0=V\setminus \Sigma_V$, $W_0=W\setminus \Sigma_W$ and $B_{V_0}=B_{V}|_{V_0}$.
By \cite{fujino-fujisawa}*{Theorem 7.1 and 7.3},
$$
h_*\omega_{V/W}(B_{V}^{=1})\simeq \Gr^0_F((\mathcal V)^*)
$$
where $\mathcal V$ is the canonical extension of $\mathcal V_0$.
By \cite{fujino-slc-trivial}*{Proposition 6.3},
 $h_*\omega_{V/W}(B_{V}^{=1})$ has an eigensheaf $\mathcal N\simeq \mathcal O_{W}(M_{W})$.
Using Definition \ref{can-mod} as in Lemma \ref{fujino-lem},
we  actually have that $\mathcal N=\mathcal M^c_{W}$.  Moreover,
 
\begin{lem}\label{mod-hodge}
On the open set $W_0$, the canonical moduli part
$$
\mathcal M^c_{W_0}=\mathcal M^c_{W}|_{W_0}\subset F^0\Gr_l^W((\mathcal V_0)^*)
$$
is a direct summand of the lowest piece of the Hodge filtration of $\Gr_l^W((\mathcal V_0)^*)$
for some integer $l$.
\end{lem}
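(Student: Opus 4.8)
The plan is to unwind the construction of $\mathcal{M}^c_W$ through the chain of isomorphisms that link the moduli part to variations of mixed Hodge structure, and to locate it inside the Hodge filtration of the dual local system over the open base $W_0$. First I would recall, following Remark \ref{comm-rem} and Definition \ref{can-mod}, that $\mathcal{M}^c_W = \tau \otimes \mathcal{O}_W(M_W)$ and that by the discussion preceding Lemma \ref{mod-hodge} we have the identification $\mathcal{M}^c_W = \mathcal{N}$, where $\mathcal{N} \simeq \mathcal{O}_W(M_W)$ is the eigensheaf of $h_*\omega_{V/W}(B_V^{=1})$ constructed in \cite{fujino-slc-trivial}*{Proposition 6.3}. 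The second input is the Fujino--Fujisawa isomorphism $h_*\omega_{V/W}(B_V^{=1}) \simeq \Gr^0_F((\mathcal{V})^*)$ of \cite{fujino-fujisawa}*{Theorem 7.1 and 7.3}, where $\mathcal{V}$ is the canonical extension of the admissible VMHS $\mathcal{V}_0$ on $W_0$.

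Next I would restrict everything to the open locus $W_0 = W \setminus \Sigma_W$. Over $W_0$ the canonical extension $\mathcal{V}$ restricts to the flat bundle underlying $\mathcal{V}_0$, and the isomorphism above restricts to $h_*\omega_{V/W}(B_V^{=1})|_{W_0} \simeq \Gr^0_F((\mathcal{V}_0)^*) = F^0((\mathcal{V}_0)^*)$, the lowest piece of the Hodge filtration on the dual. Since the formation of the eigensheaf $\mathcal{N}$ under the finite group $G$ commutes with restriction to an open set and with passing to the graded pieces of the weight filtration (the group acts by automorphisms of mixed Hodge structure), restricting the eigensheaf decomposition $h_*\omega_{V/W}(B_V^{=1}) = \bigoplus_\chi \mathcal{N}_\chi$ to $W_0$ exhibits $\mathcal{N}|_{W_0} = \mathcal{M}^c_{W_0}$ as the $\chi$-eigencomponent of $F^0((\mathcal{V}_0)^*)$ for the relevant character $\chi$. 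Because the weight filtration $W_\bullet$ is $G$-stable, this eigencomponent is contained in $F^0 \Gr^W_l((\mathcal{V}_0)^*)$ for the single weight $l$ in which it is concentrated; that it lies in a single graded piece follows from the fact that $M_W$ is a (pure) $\mathbb{Q}$-divisor, i.e. the moduli part sits in one weight, as in the analysis of \cite{fujino-slc-trivial}*{Section 6}. Finally, that $\mathcal{M}^c_{W_0}$ is a direct summand — not merely a subsheaf — uses the semisimplicity of the category of polarizable $\mathbb{Q}$-Hodge structures: $\Gr^W_l((\mathcal{V}_0)^*)$ is a polarizable VHS, its $F^0$ piece carries the $G$-action, and the $\chi$-isotypic projector is a morphism of variations, splitting off $\mathcal{M}^c_{W_0}$.

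The main obstacle I anticipate is bookkeeping rather than conceptual: one must check carefully that each of the identifications ($\mathcal{N} = \mathcal{M}^c_W$, the Fujino--Fujisawa isomorphism, the $G$-eigensheaf decomposition) is compatible with restriction to $W_0$ and with the weight filtration, so that ``the lowest piece of the Hodge filtration of $\Gr^W_l((\mathcal{V}_0)^*)$'' really is the object receiving $\mathcal{M}^c_{W_0}$ as a summand, and that the integer $l$ is well-defined (independent of local choices). I would structure the proof as: (i) restrict the eigensheaf identity and the Fujino--Fujisawa isomorphism to $W_0$; (ii) pass to $\Gr^W_\bullet$ using $G$-stability of $W_\bullet$ and identify the weight $l$; (iii) invoke semisimplicity of polarizable Hodge structures to upgrade the inclusion into a direct-summand statement. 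No new hard analysis is needed — everything rests on results quoted earlier in this section and on the standard formalism of admissible variations of mixed Hodge structure.
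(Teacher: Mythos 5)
Your outline correctly assembles the inputs ($\mathcal N=\mathcal M^c_W$, the identification $h_*\omega_{V/W}(B_V^{=1})\simeq \Gr^0_F((\mathcal V)^*)$, restriction to $W_0$, and semisimplicity of polarizable variations to upgrade an eigen-subobject to a direct summand), but it assumes away the one point that actually constitutes the proof. You write that ``the group acts by automorphisms of mixed Hodge structure'' and that ``the weight filtration $W_\bullet$ is $G$-stable.'' This is precisely what has to be proved, and it is not formal: the cyclic cover of the generic fiber is followed by a further resolution, so $G$ does not act biregularly on $V$, and $\mathcal V_0$ is built from $\iota_!\mathbb Q$ (cohomology with compact-support--type coefficients) as an \emph{admissible} variation of mixed Hodge structure; the induced $G$-action on $\Gr_F^0((\mathcal V_0)^*)$ visibly respects the Hodge filtration but its compatibility with the weight filtration is exactly the issue. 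The paper's proof consists entirely of establishing this: it reduces the statement to a fiberwise claim at each $w\in W_0$, then (using log smoothness of $h$ over $W_0$ and \cite{fujino-fujisawa}*{Remark 4.7, Lemmas 4.10 and 4.12}) to the case $W=\Delta$, $W_0=\Delta^*$, where the $G$-equivariance of the weight filtration is \cite{fujino-fujisawa-liu}*{Lemma 4.7}. Without some such argument your step (ii) does not get off the ground.

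A secondary problem: your justification that $\mathcal M^c_{W_0}$ is concentrated in a single weight-graded piece --- ``because $M_W$ is a (pure) $\mathbb Q$-divisor'' --- is a non sequitur; purity of a divisor has nothing to do with weight purity of a Hodge-theoretic object. The correct reason is that once $G$ is known to preserve $W_\bullet$, the character eigenspaces decompose compatibly with the weight filtration, and the rank-one eigensheaf $\mathcal N|_{W_0}$ therefore maps isomorphically onto its image in exactly one $\Gr_l^W$. So the structure of your argument is salvageable, but the crux --- $G$-stability of the weight filtration --- must be supplied, and the paper's route (pointwise reduction to the punctured disk plus \cite{fujino-fujisawa-liu}*{Lemma 4.7}) is the way it is done.
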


\begin{proof}
The same as Step 4 in the proof of  \cite{fujino-fujisawa-liu}*{Theorem 1.3},
it suffices to show that the action of the cyclic cover group $G$ on $\Gr_F^0((\mathcal V_0)^*)$
preserves its weight filtration.
Note that the action of $G$ on $\Gr_F^0((\mathcal V_0)^*)$ does not commute 
the stalks of different points of $W_0$.
 That is, it suffices to show that the action of $G$
preserves the weight filtration of $\Gr_F^0((\mathcal V_0)^*)|_w$
on each point $w\in W_0$. 
Since $h$ is log smooth over $ W_0$, by \cite{fujino-fujisawa}*{Remark 4.7},
we can assume that $w\in W_0=\Delta^*$ and $W=\Delta$ (cf. \cite{fujino-fujisawa}*{Lemma 4.10 and 4.12}).
Then our conclusion follows from \cite{fujino-fujisawa-liu}*{Lemma 4.7}.
\end{proof}

Let $\mathcal H=\Gr_l^W((\mathcal V_0)^*)_{\prim}$ be the polarizable variation of $\mathbb Q$-Hodge structure
containing $\mathcal M^c_{W_0} $.
Note then that the Hodge filtration on $\mathcal H$ is 
(cf. \cite{fujino-fujisawa}*{Remark 3.15} and \cite{fujino-fujisawa-liu}*{Remark 4.6}):
$$
0\subset F^0 \subset F^{-1}\subset \cdots \subset F^{-p} \subset \cdots \subset F^{-n+1}\subset F^{-n}=\mathcal H.
$$
By the first Riemann bilinear relation of its $\mathbb Q$-polarization $Q$,
$\mathcal H$ is determined by
\begin{equation*}
0\subset F^0 \subset  \cdots \subset F^{-m}, \quad m=\lfloor \frac{n-1}{2}\rfloor.
\end{equation*}
Fix a particular  $\mathbb Q$-Hodge structure $H_o$ corresponding to some point $o \in W_0$.
We drop the subscript of $H_o$ if there is no confusion and denote $H_{\mathbb C}$  
(resp. $H_{\mathbb R}$, $H_{\mathbb Z}$) as
the $\mathbb C$-structure (resp. $\mathbb R$-structure, 
integral lattice) of $H_o$.
Let $G(k, H_{\mathbb C})$ be a Grassmannian that parameterizes $k$-dimensional subspaces of $H_{\mathbb C}$.
Consider the set $\breve D$ of all Hodge filtrations of weight $n$ 
with a sequence of fixed positive integers $k^{i}=\dim F^i$ and
satisfying the first Riemann bilinear relation. Then there is a projective embedding
 (cf. \cite{griff-2}*{Proposition 8.2})
$$
\iota \colon \breve D \to G(k^0, H_{\mathbb C})\times \cdots \times G(k^{-m}, H_{\mathbb C}).
$$
It follows that
$\breve D$ is a complete and projective algebraic variety. Let 
$p_0 \colon G(k^0, H_{\mathbb C})\times \cdots \times G(k^{-m}, H_{\mathbb C}) \to G(k^0, H_{\mathbb C})$
be the first projection. We denote 
$$
\breve D_0:=p_0\circ\iota (\breve D)
$$
as the image of the first projection of $\breve D$.

By Lemma \ref{mod-hodge}, the polarization $Q$ induces a bilinear form on 
$\mathcal M^c_{W_0}\oplus \overline{\mathcal M^c_{W_0}}$ such that
\begin{align}
&\  Q(u,u)=0 \quad\quad \text{if $u \in \mathcal M^c_{w}$ or $\bar u\in \mathcal M^c_{w}$,}\\
&\ Q(C u,\bar u)>0 \quad  \text{if $u \neq 0 \in \mathcal M^c_{w}$}
\end{align}
for every point $w\in W_0$, where  $\mathcal M^c_{w}=\mathcal M^c_{W_0}|_w$ and
$C$ is the Weil operator defined by 
$Cu=i^{n}u$ for $u \in \mathcal M^c_{w}$.
Note that (4.2)  is corresponding to the first Riemann bilinear relation and
(4.3) is  corresponding  to the second Riemann bilinear relation.
Let  $\ell_0:=\mathcal M^c_{o}$ be the $1$-dimensional subspace of $H_{\mathbb C}$.
Then a candidate definition of the period domain for the canonical moduli part is the following:
\begin{defn}
The {\em{period domain of the canonical moduli part}} 
(or {\em{classifying space of the canonical moduli part}}) is defined as
$$
D^c=\{\ell \in G(1, H_{\mathbb C})\mid \ell \subset S \text{ for some } S\in \breve D_0,
 ~ Q(C\ell, \bar \ell)>0  \}
$$
and
$$
\breve D^c=\{\ell \in G(1, H_{\mathbb C}) \mid \ell \subset S \text{ for some } S\in \breve D_0\}
$$
as the {\em{compact dual}} of $D^c$.
\end{defn}

Obviously, $\breve D^c$ contains in $\{\ell \in G(1, H_{\mathbb C}) \mid Q(\ell, \ell)=0 \}$,
which is a quadric hypersurface 
 of $G(1, H_{\mathbb C})\cong \mathbb P(H_{\mathbb C})$.
Let $D_1$ be the set of flags $\ell \subset F^0\subset H_{\mathbb C}$
and $\iota_1: D_1 \to G(1, H_{\mathbb C})\times G(k^0, H_{\mathbb C})$  be the projective embedding.
Then  $\breve D^c=p_1(\iota_1^{-1}p_2^{-1}(\breve D_0))$ where $p_1$ and $p_2$ are the
first and second projections of $\iota_1$, which implies that $\breve D^c$ is  a projective algebraic variety.

The orthogonal group of the bilinear form $Q$ is a linear algebraic group, defined over $\mathbb Q$. 
Let 
$$
G_{\mathbb C}=\{g\in \GL(H_{\mathbb C})~|~ Q(gu,gv)=Q(u,v) \text{ for all } u, v\in H_{\mathbb C}\}
$$ 
be the $\mathbb C$-rational points and
$$
G_{\mathbb R}=\{g\in \GL(H_{\mathbb R})~|~ Q(gu,gv)=Q(u,v) \text{ for all } u, v\in H_{\mathbb R}\}
$$
the $\mathbb R$-rational points respectively.
The same as the classical discussions under the condition that $\ell \subset S \text{ for some } S\in \breve D_0$
(cf. \cite{griff-1}*{Theorem 1.26}),
 $G_{\mathbb C}$ (resp. $G_{\mathbb R}$) acts on $\breve D^c$ (resp. $D^c$)
transitively. 
Therefore, $\breve D^c$
is nonsingular and $D^c\subset \breve D^c$ is open in the Hausdorff topology of $\breve D^c$
inheriting the structure of complex manifold. 
Moreover, 
$$
\breve D^c \cong G_{\mathbb C}/B, \text{ where } B=\{g\in  G_{\mathbb C}~|~ g\ell_0=\ell_0 \}
$$
and
$$
D^c \cong G_{\mathbb R}/(G_{\mathbb R}\cap B)
$$
where the embedding  $D^c\subset \breve D^c$ corresponds to the inclusion
$G_{\mathbb R}/(G_{\mathbb R}\cap B)\subset G_{\mathbb C}/B$.
Since $Q$ is assumed to take rational values on the lattice $X_{\mathbb Z}$, 
$$
G_{\mathbb Z}=\{g\in \GL(H_{\mathbb R})~|~ g H_{\mathbb Z}=H_{\mathbb Z}\}
$$
lies in $G_{\mathbb R}$ as an {\em{arithmetic}} subgroup.
The action of $G_{\mathbb Z}$ on $D^c$ is  properly discontinuous. Therefore,
for any subgroup $\Gamma\subset G_{\mathbb Z}$, the quotient of the complex structure of 
$D^c$ by $\Gamma$ turns $D^c/ \Gamma$ into a complex analytic variety. 
Now we are ready to define the period map induced by  the canonical moduli part:

\begin{defn}
Induced by the data above, the natural holomorphic mapping 
$$
p^c \colon W_0 \to D^c/ \Gamma
$$
is called the {\em{period map of the canonical moduli part}}.
\end{defn}

If $h$ is a basic slc-trivial fibration, or more generally,
if $\rank h_*\omega_{V/W}(B_{V}^{=1})=1$, then $\breve D^c=\breve D_0$
and $p^c =p_0\circ\Phi$ where $\Phi$ is the usual period map and $p_0$ is the first projection.
But even in this case, $\breve D^c=\breve D_0$ is not necessarily a Hermitian symmetric domain.
For example, $h$ is a family of Calabi--Yau 3-folds. So
we don't have the Baily--Borel--Satake compactification of $D^c/ \Gamma$ in general.
Moreover, $D^c/ \Gamma$ is not necessary to be algebraic.

On the other hand,  in case $\Gamma$ is arithmetic,
Kato--Nakayama--Usui \cite{ku} have constructed an analogue of the toroidal compactification 
in the horizontal directions of $D/ \Gamma$. Recently,
Bakker--Brunebarbe--Tsimerman \cite{bbt} have proved that  $\Phi(W_0)$
is quasi-projective and Green--Griffiths--Laza--Robles \cite{gglr} tried to
strengthen the results of \cite{bbt} by the construction of a canonical
projective compactification of  $\Phi(W_0)$.
These evidences and efforts enlighten us to consider the 
compactification of $p^c(W_0)$ by patching strata of period maps of the canonical moduli part.
Anyway, we post the conjecture below following from \cite{griff-3}*{Conjecture 10.5}
and \cite{gglr}*{Conjecture 1.2.2}:

\begin{conj}\label{exist-conj}
Notation as above. Let $P_0=p^c(W_0)$. Then
there exists a projective compactification $P$ of $P_0$ such that 
\begin{itemize}
\item[(i)] $P$ admits a structure of a normal complex analytic variety.
\item[(ii)] There exists an analytic extension $p^c_e\colon W \to P$.
\item[(iii)] There exists an integer $r$ 
such that $(\mathcal M^c_W)^{\otimes r}=(p^c_e)^*\mathcal O^{an}_{P}(1)$.
\end{itemize}
\end{conj}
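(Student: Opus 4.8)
\emph{Towards Conjecture~\ref{exist-conj}.}
The plan is to treat $p^c$ as a (non-standard) period map and to feed it into the recent algebraization theory for images of period maps. By Lemma~\ref{mod-hodge} the line $\mathcal M^c_{w}$ sits inside $F^0\Gr^W_l((\mathcal V_0)^*)|_w$ for every $w\in W_0$, so via the embedding $\breve D^c\hookrightarrow\mathbb P(H_{\mathbb C})$ one has $\mathcal M^c_{W_0}=(p^c)^*\mathcal O_{\breve D^c}(1)$, where $\mathcal O_{\breve D^c}(1)$ is the Hodge-theoretic line bundle restricting $\mathcal O_{\mathbb P(H_{\mathbb C})}(1)$, and, by Lemma~\ref{fujino-lem}, $\mathcal M^c_W=\mathcal N=\mathcal O_W(M_W)$ is the Deligne canonical extension of this bundle across $\Sigma_W$. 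This already supplies the line bundle appearing in (iii) and makes (i) plausible; the content of the conjecture is to produce a \emph{projective} target. I would first dispose of the easy case $\rank h_*\omega_{V/W}(B_V^{=1})=1$ (for instance when $h$ is a genuine basic slc-trivial fibration), where $\breve D^c=\breve D_0$ and $p^c=p_0\circ\Phi$ with $\Phi\colon W_0\to D/\Gamma$ the ordinary period map of $\mathcal H$ and $p_0$ an algebraic projection.

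In that case one applies \cite{bbt}: $\Phi$ is definable in $\mathbb R_{\mathrm{an},\exp}$ and $\Phi(W_0)$ is quasi-projective, hence so is $P_0=p_0(\Phi(W_0))=p^c(W_0)$. For the projective compactification I would run the Green--Griffiths--Laza--Robles construction \cite{gglr}, taking $P:=\operatorname{Proj}\bigoplus_{m\ge 0}H^0\bigl(W,(\mathcal M^c_W)^{\otimes m}\bigr)$; here one uses that $\mathcal O_{\breve D^c}(1)$ is very ample on $\breve D^c$, being the restriction of $\mathcal O_{\mathbb P(H_{\mathbb C})}(1)$, together with the asymptotic norm estimates for the degenerating Hodge metric along $\Sigma_W$, which show that $\mathcal M^c_W$ carries a singular hermitian metric with non-negative curvature current and at worst logarithmic growth, so that it is nef. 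Finite generation of the section ring and normality of $P$ then follow as in \cite{gglr}; the analytic extension $p^c_e\colon W\to P$ in (ii) comes from the fact that $\mathcal M^c_W$ is already globally defined on $W$, combined with the Kato--Nakayama--Usui extension \cite{ku} of the period map in the horizontal directions at $\Sigma_W$; and (iii) holds for the integer $r$ that simultaneously clears the $\operatorname{Proj}$-truncation and absorbs the $\Gamma$-non-equivariance of $\mathcal O_{\breve D^c}(1)$. When $\dim W=1$ this reduces to \cite{liu}*{Lemma~3.3}, which serves as the sanity check.

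In general $p^c$ no longer factors literally through $\Phi$, because $\mathcal M^c_{W_0}$ is carved out of the lowest Hodge piece by the $G$-eigensheaf decomposition of \cite{fujino-slc-trivial}*{(6.11)}, which is fibrewise algebraic but not flat. The strategy is the same but requires a version of \cite{bbt} and \cite{gglr} for the auxiliary space $D^c$, whose defining data are the quadric $Q(\ell,\ell)=0$ and the flag condition $\ell\subset S\in\breve D_0$: one shows directly that $p^c$ is definable in $\mathbb R_{\mathrm{an},\exp}$ — it is the composite of the definable map $\Phi$ with the algebraic operation of intersecting the resulting Hodge flag with the fixed eigenspace — deduces by definable Chow that $P_0=p^c(W_0)$ is a quasi-projective variety, and then forms $P=\operatorname{Proj}\bigoplus_m H^0(W,(\mathcal M^c_W)^{\otimes m})$ as before.

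The hard part is that $D^c/\Gamma$ is, in general, neither algebraic nor the quotient of a Hermitian symmetric domain — as the discussion above notes, already for a family of Calabi--Yau $3$-folds $\breve D^c=\breve D_0$ is just a quadric but $D^c$ is not symmetric — so there is no Baily--Borel--Satake compactification to borrow, and the compactification must be built by hand, controlling the degeneration of $\Gr^W_l((\mathcal V_0)^*)$ and of its bottom Hodge piece along every stratum of $\Sigma_W$. Even granting \cite{bbt} and \cite{gglr} for the ambient $\Phi$, transporting their conclusions to $D^c$ and, above all, upgrading ``$P_0$ is quasi-projective'' to the precise statements that $P$ is \emph{normal}, that $p^c$ extends to an honest analytic morphism on all of $W$, and that $(p^c_e)^*\mathcal O^{an}_P(1)=(\mathcal M^c_W)^{\otimes r}$ on the nose — rather than merely up to a birational or numerical ambiguity — is where the genuine difficulty lies, which is why the statement is posed only as a conjecture.
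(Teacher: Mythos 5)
The statement you were given is posed in the paper as a \emph{conjecture}, and the paper does not prove it: the only result established there is the special case $\mathcal M^c_W\equiv 0$ (Theorem \ref{exist-thm}), where $\mathcal M^c_{W_0}$ is a local subsystem of $\mathcal H$, $P_0=p^c(W_0)$ is a single point, and all three assertions become trivial. Your text is accordingly a strategy sketch rather than a proof — you say so yourself in the final paragraph — and judged as a proof it contains a genuine gap; judged against the paper, it also does not carry out the one case the paper actually settles, namely the reduction to a point when $\mathcal M^c_W\equiv 0$.

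The concrete gap is the circularity in your construction of $P$. Setting $P=\operatorname{Proj}\bigoplus_{m\ge 0}H^0\bigl(W,(\mathcal M^c_W)^{\otimes m}\bigr)$ and then asserting (iii), i.e. $(\mathcal M^c_W)^{\otimes r}=(p^c_e)^*\mathcal O^{an}_{P}(1)$, presupposes both that this section ring is finitely generated and that some power of $\mathcal M^c_W$ is globally generated — in other words that $\mathcal M^c_W$ is semi-ample. But the paper remarks immediately after the conjecture that Conjecture \ref{exist-conj} implies the b-semi-ampleness conjecture, so your candidate $P$ assumes essentially what the conjecture is meant to produce. Nefness of $\mathcal M^c_W$ coming from the curvature of the degenerating Hodge metric does not suffice: for a nef line bundle that is not semi-ample, the Proj of its section ring does not pull back to (a power of) the bundle, so (iii) would fail ``on the nose.'' The appeals to \cite{bbt} and \cite{gglr} do not close this either: \cite{bbt} gives quasi-projectivity of $\Phi(W_0)$ but neither the analytic extension (ii) across $\Sigma_W$ nor the identification (iii), and the compactification statement of \cite{gglr} is itself conjectural outside the classical Hermitian symmetric situation — which is exactly the situation excluded here (e.g. families of Calabi--Yau threefolds, where $D^c$ is not symmetric and no Baily--Borel--Satake compactification is available). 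Your sketch correctly locates where the difficulty lies, but it is not a proof, and none should be expected: the statement is open.
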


We prove Conjecture \ref{exist-conj} for a special case.

\begin{thm}\label{exist-thm}
If $\mathcal M^c_{W}\equiv 0$, then Conjecture \ref{exist-conj} holds true.
\end{thm}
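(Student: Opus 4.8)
The plan is to show that the hypothesis $\mathcal M^c_W\equiv 0$ forces the period map $p^c\colon W_0\to D^c/\Gamma$ of the canonical moduli part to be constant; once this is known, the compactification $P$ demanded by Conjecture \ref{exist-conj} can be taken to be a reduced point, and all three conditions follow at once. As a first step, I would record that the argument of Lemma \ref{cons-lem}, applied with $\mathcal N=\mathcal M^c_W$ (recall $\mathcal N=\mathcal M^c_W$ from the discussion preceding Lemma \ref{mod-hodge}), together with \cite{deligne}*{Corollaire (4.2.8) (iii) b)}, already produces a positive integer $r$ with $(\mathcal M^c_W)^{\otimes r}=\mathcal O_W$ in the strong sense of this paper; in particular, the Deligne canonical extension of the rank-one Hodge sub-bundle $\mathcal M^c_{W_0}$ is numerically trivial on the projective model $W$.

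The core of the argument is the constancy of $p^c$. By Lemma \ref{mod-hodge}, $\mathcal M^c_{W_0}$ is a rank-one holomorphic sub-bundle of the lowest piece $F^0$ of the Hodge filtration of the polarizable variation $\mathcal H=\Gr_l^W((\mathcal V_0)^*)_{\prim}$, so it carries the Hodge metric, and Griffiths' curvature formula identifies the first Chern form of $(\mathcal M^c_{W_0},h_{\mathrm{Hodge}})$ with a semi-positive $(1,1)$-form whose pointwise vanishing is equivalent to the vanishing of the relevant component of the infinitesimal period (Kodaira--Spencer) map, i.e. of the differential of $p^c$. Since the Hodge metric on a Deligne extension is good in the sense of Mumford, this form represents $c_1$ of the canonical extension $\mathcal M^c_W$ on $W$; as the latter is numerically trivial, pairing with a Kähler class on $W$ shows the form vanishes identically on $W_0$. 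Hence $dp^c\equiv 0$, and, $W_0$ being connected (or a finite disjoint union of connected quasi-projective varieties, in which case one runs the argument componentwise), $p^c$ is constant. Equivalently, one may deduce the same conclusion directly from \cite{deligne}*{Corollaire (4.2.8) (iii) b)}, exactly as in the proof of Lemma \ref{cons-lem}: numerical triviality of the canonical extension forces $\mathcal M^c_{W_0}$ to underlie a unitary local subsystem of pure Hodge type, so the line $\mathcal M^c_w$ does not depend on $w\in W_0$.

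With $p^c$ constant, $P_0=p^c(W_0)$ is a single point (respectively a finite set of points), and I would simply take $P=P_0$. Then $P$ is trivially a normal complex analytic variety, which gives (i); the constant map $p^c_e\colon W\to P$ extending $p^c$ is analytic, which gives (ii); and, choosing $\mathcal O^{an}_P(1)$ to be the trivial bundle on the point, one has $(p^c_e)^*\mathcal O^{an}_P(1)=\mathcal O_W=(\mathcal M^c_W)^{\otimes r}$ by the first step, which gives (iii). The main obstacle is the middle step: one must upgrade numerical triviality of $\mathcal M^c_W$ on the compact model $W$ to honest flatness, hence constancy, of the period data on the open locus $W_0$. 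This requires either careful control of the asymptotics of the Hodge metric along $\Sigma_W$ — so that its curvature current genuinely computes $c_1(\mathcal M^c_W)$ — or a direct appeal to Deligne's structure theorem for variations of Hodge structure whose canonical extension is numerically trivial, which in turn relies on the unipotency of the local monodromy arranged by the unipotent reduction $\pi\colon W\to Y$.
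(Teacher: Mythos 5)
Your proposal is correct and takes essentially the same route as the paper: both arguments reduce to showing that the numerical triviality of $\mathcal M^c_{W}$ forces $\mathcal M^c_{W_0}$ to be a flat local subsystem of $\mathcal H$ (the paper invokes Lemma \ref{cons-lem} and Lemma \ref{mod-hodge}, which rest on the proof of \cite{fujino-fujisawa-liu}*{Theorem 1.3} and Deligne's finiteness result, exactly the two alternatives you sketch), so that $P_0=p^c(W_0)$ is a single point. Taking $P=P_0$ and extending trivially then yields (i)--(iii) just as you describe.
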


\begin{proof}
As in Lemma \ref{cons-lem} and \ref{mod-hodge}, $\mathcal M^c_{W_0}$ is a local subsystem of $\mathcal H$.
This is equivalent to say that $P_0=p^c(W_0)$ is a point of $D^c/ \Gamma$
and $(\mathcal M^c_{W_0})^{\otimes r}=(p^c_e)^*(\mathcal L|_P)$ for some integer $r$ 
where $\mathcal L$ is the universal subbundle. Obviously $P_0=P$ is a bounded symmetric domain
and we can trivially extend $p^c$ to $p^c_e\colon W \to P$.
In particular, by taking the canonical extension we have
$$
(\mathcal M^c_{W})^{\otimes r}=(p^c_e)^*\mathcal O^{an}_{P}
$$
where $\mathcal O^{an}_{P}=\mathcal O^{an}_{P}(1)=(\mathcal L|_P)=\mathbb C(P)$.
\end{proof}

Theorem \ref{exist-thm} allows us to patch different $\mathcal M^c_{W_i}\equiv 0$ on $W_i$ together 
into a  union $W':=\cup W_i$ if  there is an identity
\begin{equation}\label{eq4.4}
\mathcal M^c_{W_i}|_{W_i\cap W_{j}} = \mathcal M^c_{W_j}|_{W_i\cap W_{j}}
\end{equation}
for any $i, j$. Note that $W'$ is not necessary to be equidimensional.
Actually, \eqref{eq4.4} implies 
an identity of the images of period maps $P_i$ and $P_j$ for any $i, j$. 
Assume that $W'$ is connected. Then there is only one point $P=P_i=P_j$ for any $i, j$
and the period maps $(p^c_e)_i$ can be patched together giving 
$$
p^c_e\colon W'\to P
$$
such that $(p^c_e)^*\mathcal O^{an}_{P}|_{W_i}=(\mathcal M^c_{W_i})^{\otimes r_i}$ for some integer
$r_i$ for every $i$.
This gives another viewpoint of \eqref{eq4.1}.
Then to prove Theorem \ref{key thm}, the rest is the same as Subsection \ref{subsec4.1}
by descending the global sections of $(p^c_e)^*\mathcal O^{an}_{P}$
onto $Z$.

\begin{rem}
Conjecture \ref{exist-conj} holds true directly implies that the
b-semi-ampleness conjecture (see \cite{fujino-slc-trivial}*{Conjecture 1.4}) 
holds true. We hope that
the rich structures given by  arithmetic and representation theory on the period domain 
could shed some lights on the abundance of the moduli part.
\end{rem}

\begin{rem}
Conjecture \ref{exist-conj} also holds true if  $\breve D^c=\breve D_0$ and $D^c$ is in 
the {\em{classical case}} (cf. \cite{gglr}*{Remark 1.1.2}), that is,
 $ D^c$ is a Hermitian symmetric domain and geometrically corresponding to
period maps for abelian varieties or K3-type objects
(e.g. K3's, hyper-K\" ahler manifolds, cubic 4-folds).
For example, if $h$ is a family of K3 surfaces, then the projective compactification
$P$ contains in the first projection of the Baily--Borel--Satake compactification of $D/ \Gamma$
and the semi-ampleness of the canonical moduli part is given by
\cite{fujino-bundle}*{Theorem 1.2}.
\end{rem}


\section{Proof of Theorem \ref{main-thm}}\label{sec5}
We discuss various cases according to the nef dimension $n(D)$.
Note that $(Y, B_Y)$ is 
log canonical by Corollary \ref{cart-cor} and 
the moduli $\mathbb Q$-divisor $M_Y$ is nef and $\mathbb Q$-Cartier by
Proposition \ref{moduli-nef}.

\begin{case}\label{case1}
Assume that $n(D)=0$. That is, $D= K_Y+B_Y+M_Y\equiv 0$.
We run the $(K_Y+B_Y)$-MMP (cf. \cite{fujino-surfaces}*{Theorem 1.1})
which is a sequence of extremal contractions and terminates at a model $(Y^*, B_{Y^*})$
such that one of the following holds:
\begin{itemize}
\item[(1)] $K_{Y^*}+B_{Y^*}$ is semi-ample.
\item[(2)] There is a morphism onto a curve $g\colon Y^*\to C$ such that $-(K_{Y^*}+B_{Y^*})$ is $g$-ample.
\item[(3)] $-(K_{Y^*}+B_{Y^*})$ is ample.
\end{itemize} 
Let $\pi\colon Y\to Y^*$ be the composition of the sequence of extremal contractions.

First, we assume that $K_{Y^*}+B_{Y^*}$ is semi-ample. Since $D$ is $\mathbb Q$-Cartier and $D\equiv 0$, 
there is a $\mathbb Q$-Cartier $\mathbb Q$-divisor
$D^*$ such that $D=\pi^*D^*$ by the cone and contraction theorem (cf. \cite{fujino-surfaces}*{Theorem 3.2}). 
In particular, $D^*\equiv 0$ on $Y^*$.
Let $M_{Y^*}=D^*-K_{Y^*}-B_{Y^*}$. 
Then it is easy to see that $M_{Y^*}=\pi_* M_Y$ is nef.
It follows that $K_{Y^*}+B_{Y^*}\equiv M_{Y^*} \equiv 0$. 
The first part $K_{Y^*}+B_{Y^*} \equiv 0$  implies that 
$K_{Y^*}+B_{Y^*} \sim_{\mathbb Q} 0$ since it is semi-ample.
As in Corollary \ref{dlt-cor}, we have $M_Y+E=\pi^*M_{Y^*}\equiv 0$ 
for some effective $\pi$-exceptional $\mathbb Q$-divisor $E$.
It follows that $E=0$ and  $\pi=\id$.
Then by \cite{fujino-fujisawa-liu}*{Theorem 1.3}, $D^*\sim_{\mathbb Q}M_{Y^*} \sim_{\mathbb Q} 0$.

Next, we assume that there is a morphism onto a curve $g\colon Y^*\to C$ 
such that $-(K_{Y^*}+B_{Y^*})$ is $g$-ample.
By the cone and contraction theorem again, there is a $\mathbb Q$-Cartier $\mathbb Q$-divisor
$D_C$ such that $D=\pi^*g^*(D_C)$. By Lemma \ref{slc-lem},
$g\circ \pi \circ f\colon (X, B_X) \to (C,D_C)$ is
an induced very basic slc-trivial fibration
such that $D_C\equiv 0$. 
Let $0\equiv D_C=K_C+B_C+M_C$ be the structure decomposition.
Then $C$ is either $\mathbb P^1$ or a smooth elliptic curve since $B_C$ is effective and $M_C$ is nef. 
If $C=\mathbb P^1$, then it is obvious that $D_C\sim _{\mathbb Q}0$ and thus $D\sim _{\mathbb Q} 0$. 
If $C$ is a smooth elliptic curve, 
then $B_C=0$ and $D_C\sim M_C\equiv 0$. 
By \cite{fujino-fujisawa-liu}*{Theorem 1.3} again, $M_C\sim _{\mathbb Q}0$
and thus $D\sim _{\mathbb Q} 0$.

Finally, we assume that there is a morphism onto a  point  $g\colon Y^* \to P$
such that $-(K_{Y^*}+B_{Y^*})$ is ample.
Then by  the cone and contraction theorem again,
$D\sim_{\mathbb Q} 0$.

\medskip

Anyway, we prove that if $n(D)=0$, then $D\sim_{\mathbb Q} 0$.
\end{case}

\begin{case}\label{case2}
Assume that $n(D)=1$.
By \cite{b8}*{Proposition 2.11 and 2.4.4}, 
there exists a morphism $g\colon Y \to C$ mapping onto a smooth curve $C$
and an ample $\mathbb Q$-divisor $A$ on $C$ such that $D\equiv g^*A$.
Let $F$ be a general fiber of $g$. Then
$$
0\equiv D|_F=(K_Y+B_Y+M_Y)|_F=K_F+B_F+M_Y|_F
$$
where $B_F=B_Y|_F$ is effective and $M_Y|_F$ is nef.
It follows that either 
 $\deg K_F<0$ or $\deg K_F=0$.
In the former case, $F\simeq {\mathbb P}^1$
and thus $D|_F\sim_{\mathbb Q} 0$.
In the latter case, $F$ is a smooth elliptic curve,
$B_F=0$ and $M_Y|_F\equiv 0$.  Let $T=f^*F$ on $X$.
Then $f \colon (X, B_X+T) \to (Y, D+F)$ with
$$
K_X+T+B_X\sim_{\mathbb Q} f^*(K_Y+F+B_Y+M_Y)
$$ 
is an induced very basic slc-trivial fibration since $\Supp B_Y\cap F=\emptyset$.
Note that $F$ is an lc center of the induced $f$. By Theorem \ref{key thm}, $M_Y|_F\sim_{\mathbb Q} 0$.
It follows that $D|_F\sim_{\mathbb Q} 0$.
Therefore, we have $\kappa (F, D|_F)=0$ in both cases.
By the canonical bundle formula 
(cf. \cite{mori}*{Theorem 1.12}),
there exists a $D_C$ on $C$ such that 
$$
D\sim_{\mathbb Q}g^*D_C+E
$$ 
where $E$ is an effective $\mathbb Q$-divisor supported by some special fibers of $g$.
Note that $D_C$ is ample since $D_C\equiv A$ by pushforward.
Then $1=n(D)\geq \kappa (D)\geq \kappa (g^*D_C)=\kappa (D_C)=1$.
That is, $\kappa (D)=n(D)=1$.
By \cite{fujita}*{Theorem 4.1}, $D$ is semi-ample.
\end{case}

\begin{case}\label{case3}
Assume that $n(D)=2$.
If $D$ is big, then $D$ is semi-ample 
by a suitable dlt blow-up as in Corollary \ref{dlt-cor} and \cite{liu}*{Corollary 3.5}.
If $M_Y$ is big, then $2D-(K_Y+B_Y)=D+M_Y$ is nef and big,
and $D$ is also semi-ample 
by Corollary \ref{dlt-cor} and \cite{liu}*{Remark 3.6}.
Therefore, we further assume that $D$ and $M_Y$ are not big. That is, $D^2=M_Y^2=0$.
We follow the discussions in \cite{ambro-nef}*{Section 6} and 
\cite{fujino-surfaces}*{Section 6} whose results heavily depended on \cite{sakai} and \cite{fujita}.

Let $\nu\colon V\to Y$ be a Sakai minimal resolution with respect to $D$ 
(see \cite{fujita}*{Definition 3.3} for example).
We put $K_V+B_V=\nu^*(K_Y+B_Y)$, $M_V=\nu^*M_Y$ and $D_V=\nu^*D$.
Note then that $D_V^2=M_V^2=0$ and $D$ is semi-ample if and only if $D_V$ is semi-ample.
Note also that there is an induced very basic slc-trivial fibration on $V$ by Lemma \ref{res-lem}.
Therefore, there is no harm to replace $Y$ by its Sakai minimal resolution $V$.
Then the same as the proof of \cite{ambro-nef}*{Theorem 0.3}, 
$K_Y+tD$ is nef and $(K_Y+tD)^2\geq 0$ for some $t$.
Then either $K_Y+tD$ is big or $K_{Y}^2=K_{Y}\cdot D=D^2= 0$.
In the former case, 
$$
aD-(K_Y+B_Y)=(a-1)D+M_Y=(a-2)D+K_Y+B_Y+2M_Y
$$
for any $a>2+t$.
Then $aD-(K_Y+B_Y)$ is nef and big since $(a-1)D+M_Y$ is nef 
and $K_Y+(a-2)D+B_Y+2M_Y$ is big. Therefore, $D$ is semi-ample 
 by \cite{liu}*{Remark 3.6} again. 
In the latter case,  we have  $B_Y\cdot D=M_Y\cdot D=0$ 
since $D$ is nef and 
$\mathbb Q$-linearly equivalent to an effective divisor by \cite{ambro-nef}*{Proposition 6.1 (4)}.
Since $D+M_Y$ has maximal nef dimension (cf. \cite{ambro-nef}*{Remark 2.4}), 
$M_Y\cdot K_Y=(D+M_Y)\cdot K_Y\geq 0$ by \cite{ambro-nef}*{Proposition 6.1 (2)}.
It follows that $M_Y\cdot K_Y =M_Y\cdot B_Y=0$.
Using \cite{ambro-nef}*{Proposition 6.1} again, we have that $\kappa(D)=\kappa(M_Y)= 0$.

We further assume that $H^1(Y, \mathcal O_Y)=0$. 
Then by \cite{sakai}*{Proposition 4} (see also 
\cite{ambro-nef}*{Theorem 6.2} 
or the proof of \cite{fujino-surfaces}*{Theorem 6.3}),
$Y$ is a degenerate del Pezzo surface and 
there exist positive rational numbers $a$, $b$ and $c$
such that $b+c>a$ and
$$
K_Y\sim -aT, \quad B_Y \sim bT, \quad M_Y \sim cT , \quad D\sim (b+c-a)T
$$
where $T$ is {\em{indecomposable of canonical type}} in the sense of
Mumford (cf. \cite{fujino-surfaces}*{Lemma 6.3} or \cite{fujita}*{Lemma 5.5}).
We will derive a contradiction assuming  $T\neq 0$. 
Note that $(Y, B_Y)$ is log canonical.
Assume that $\Supp T$ is reducible.
Then all irreducible components of $\Supp T$ are smooth rational curves 
with self intersection $-2$.
By adding a multiple of $T$ to $B_Y$,  we can assume that 
$(Y, B_{Y})$ is lc but not klt.
Replacing by a further blowing up if necessary (though not Sakai minimal anymore),
 we can further assume that
some smooth rational curve $C \subset \Supp T$ is an lc center of $(Y, B_{Y})$.
We run the $(K_Y+B_Y-C+M_Y)$-MMP. Let $C'\in \Supp (B_Y-C)$ be a smooth
rational curve such that $C'\cdot C>0$. Then 
$C'$ stays in an extremal ray  and
$(K_Y+B_Y-C+M_Y)\cdot C'=(D-C)\cdot C'=-C\cdot C'<0$.
Therefore, we get a morphism $\pi_1\colon Y\to Y_1$
contracting $C'$ with  $D_1=\pi_{1*}D$, $M_{Y_1}=\pi_{1*}M_Y$, $T_1=\pi_{1*}T$
and $C_1=\pi_{1*}C$.
Since $D\cdot C'=M_Y\cdot C'=T\cdot C'=0$, we have that $D=\pi_1^*D_1$, $M_Y=\pi_1^*M_{Y_1}$
and $T=\pi_1^*T_1$ by the cone and contraction theorem.
In particular, 
$T_1$ is a {\em{curve of fibre type}} in the sense of \cite{sakai}*{Section 2} 
(not necessarily indecomposable of canonical type) and $D_1\sim_{\mathbb Q}M_{Y_1}\sim_{\mathbb Q} T_1$.
Repeatedly, we finally get a minimal model $(Y^*, B_{Y^*})$ 
with that  $D^*=\pi_*D$, $B^*=\pi_*B_Y$, $M^*=\pi_*M_Y$, $T^*=\pi_*T$
and $C^*=\pi_*C$, where
$\pi: Y\to Y^*$ is the composition of extremal contractions.  
Note that $(T^*)^2=0$ and $T^*$ is irreducible. Hence
$$
D^*\sim_{\mathbb Q}M^*\sim_{\mathbb Q} T^*\sim_{\mathbb Q} C^*=B^*.
$$
By Lemma \ref{slc-lem}, there is an induced very basic slc-trivial fibration on $(Y^*, D^*)$
and $C^*$ is an lc center.
By $M^*\cdot C^*=0$ and Theorem \ref{key thm}, $C^*|_{C^*}$ and $T^*|_{T^*}$ are torsions.
It is a contradiction since $T|_T$ is not a torsion by  \cite{sakai}*{Proposition 5}.

Finally we assume that $H^1(Y, \mathcal O_Y)\neq 0$.
Then $Y$ is an irrational ruled surface over an elliptic curve $E$, of
type $II_c$ or $II_c^*$ in Sakai's classification table \cite{sakai2} 
(see also \cite{ambro-nef}*{Theorem 6.2 (ii)}).
Let $\alpha\colon Y\to E$ be the Albanese fibration.
Note then that $D=d_0E_0+d_1E_1$  where $E_0$  
and $E_1$ are disjoint sections of $\alpha$ in type $II_c$
and $D=d_0E_0$ in type $II_c^*$. In both cases, $B_Y$ is supported by
$E_0$ and $E_1$. As above,  we add a multiple of $D$ and assume that 
$(Y, B_{Y})$ is log canonical and $C$ is an lc center of $(Y, B_{Y})$ supported by 
$E_0$ or $E_1$. By Lemma \ref{slc-lem} and Theorem \ref{key thm} again,
there is an induced very basic slc-trivial fibration on $C$ and $C|_C$ is a torsion.
But the same as \cite{fujita}*{(5.9)}, $C|_C$ is not a torsion. Again, we get a contradiction.
\end{case}
\setcounter{case}{0}


\section{Minimal model program for very basic slc-trivial fibrations}\label{sec6}

At the beginning of this section, 
we should note that the proof of \cite{fl-plt}*{Section 7}
works for any plt pairs with $\kappa(K_X+\Delta) <\dim X$ once we assume
that Conjecture \ref{ab-conj} holds true in lower dimensions 
(playing the same role as \cite{fl-plt}*{Corollary 5.5} in the proof of \cite{fl-plt}*{Theorem 1.2}),
and the termination of the minimal model program for very basic slc-trivial fibrations
(playing the same role as \cite{fl-plt}*{Section 6} in the proof of \cite{fl-plt}*{Theorem 1.2}).
Since we have Theorem \ref{main-thm}, we can prove Theorem \ref{main-cor} immediately
once we establish  the termination of the minimal model program for 
very basic slc-trivial fibrations when the base is of dimension 3.
Therefore, in the rest of this section, we try to explain the minimal model program 
for very basic slc-trivial fibrations in some general settings.

\begin{defn}\label{d-flip}
Let $f\colon (X, B_X) \to (Y,D)$ be a very basic slc-trivial fibration
with the structure decomposition $D=K_Y+B_Y+M_Y$.
The commutative diagram
$$
\xymatrix{
Y \ar[rd]_{g}\ar@{-->}[rr]^p & & Y^+ \ar[ld]^{g^+}\\ 
&Z&
}
$$ 
is called a {\em{flip with respect to $D$}} or 
{\em{$D$-flip}} for short, if
\begin{itemize}
\item[(1)] $D^+$ is $\mathbb Q$-Cartier, where $D^+$ is the strict transform of $D$ on $Y^+$;
\item[(2)] $\Ex(g)$ has codimension at least two in $Y$ and $-D$ is $g$-ample;
\item[(3)]  $\Ex(g^+)$ has codimension at least two in $Y^+$ and $D^+$ is $g^+$-ample.
\end{itemize} 
\end{defn}

We show that $D$-flips perform in the framework of very basic slc-trivial fibrations.

\begin{lem}\label{flip-lem}
Let $f\colon (X, B_X) \to (Y,D)$ be a very basic slc-trivial fibration
with the structure decomposition $D=K_Y+B_Y+M_Y$. Let
$$
\xymatrix{
Y \ar[rd]_{g}\ar@{-->}[rr]^p & & Y^+ \ar[ld]^{g^+}\\ 
&Z&
}
$$ 
be a $D$-flip.
Then there is an induced very basic slc-trivial fibration $f^+\colon (X^+, B_{X^+}) \to (Y^+, D^+)$.
\end{lem}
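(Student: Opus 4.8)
The plan is to transport the fibration to a common resolution of the birational map $p$ and then descend it onto $Y^+$, using crucially that $p$ is an isomorphism in codimension one. First I would take a resolution $W$ together with projective birational morphisms $q\colon W\to Y$ and $q^+\colon W\to Y^+$, with $q$ chosen so as to resolve the indeterminacy of $p$, so that $q^+=p\circ q$ is a morphism; note that then $g\circ q=g^+\circ q^+\colon W\to Z$. Applying the construction recalled just before \eqref{eq3.1} (from \cite{fujino-slc-trivial}*{Section 4, Theorems 1.3 and 1.7}) to the birational morphism $q$, one obtains an induced basic slc-trivial fibration $f_W\colon(X_W,B_{X_W})\to(W,q^*D)$ fitting into a commutative square over $q$ as in \eqref{eq3.1}.

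The key geometric input is to compare $q^*D$ with $(q^+)^*D^+$. Put $F:=q^*D-(q^+)^*D^+$, a $\mathbb Q$-Cartier $\mathbb Q$-divisor on $W$. Since $p$ is an isomorphism in codimension one and $\Ex(g)$, $\Ex(g^+)$ have codimension at least two, every prime divisor on $W$ is either the strict transform of a prime divisor common to $Y$ and $Y^+$, on which $q^*D$ and $(q^+)^*D^+$ have the same coefficient, or is exceptional over both $Y$ and $Y^+$; hence $\Supp F$ is exceptional over both. Moreover $-F=(q^+)^*D^+-q^*D$ is $q^+$-nef: $(q^+)^*D^+$ is numerically trivial on the fibres of $q^+$, while any curve contracted by $q^+$ is mapped to a point of $Z$, so the $g$-ampleness of $-D$ forces $q^*(-D)$ to have nonnegative degree on it. The negativity lemma (\cite{kollar-mori}*{Lemma 3.39}) then yields $F\ge 0$.

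With this, I would set $X^+:=X_W$, $f^+:=q^+\circ f_W$, and $B_{X^+}:=B_{X_W}-f_W^*F$, replacing $X^+$ by a further log resolution if necessary so that the pair is simple normal crossing. As $F\ge 0$ is vertical over $Y^+$ we have $B_{X^+}\le B_{X_W}$, and the two pairs agree over the generic point of $Y^+$, so conditions $(1)$–$(3)$ of Definition \ref{slc-def} follow immediately from those of $f_W$ together with $q^+_*\mathcal O_W\simeq\mathcal O_{Y^+}$, while $(4)$ holds because $K_{X^+}+B_{X^+}\sim_{\mathbb Q}f_W^*(q^*D-F)=(f^+)^*D^+$. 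For $(5)$ and $(6)$ I would repeat the argument of Lemma \ref{res-lem}: $(f^+)_*\mathcal O_{X^+}(\lceil-(B_{X^+}^{<1})\rceil)$ is torsion-free on the normal variety $Y^+$ and carries a natural injection from $\mathcal O_{Y^+}$, so it suffices to check this injection is an isomorphism at the generic point of every prime divisor $P^+$ on $Y^+$; there $q^+$ is an isomorphism and $F$ vanishes, so the whole picture is locally identified with $f\colon X\to Y$ along the corresponding prime divisor of $Y$, where $(6)$ holds because $f$ is very basic.

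The main obstacle is the effectivity $F\ge 0$, that is, that the flip does not force the corrected boundary $B_{X^+}$ to acquire vertical components of the wrong sign along the flipped locus; this is precisely where the relative ampleness of $-D$ over $Z$ is used, through the negativity lemma. Everything else is either formal (conditions $(1)$–$(4)$) or a verbatim repetition of the codimension-one argument of Lemma \ref{res-lem} (conditions $(5)$–$(6)$).
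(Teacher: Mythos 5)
Your argument is correct, and its first half is exactly the paper's: both pass to a common resolution of $p$ and deduce from the negativity lemma that $F=q^*D-(q^+)^*D^+$ (the paper's $E_2-E_1$) is effective, using that $\Supp F$ is exceptional over both sides because a flip is an isomorphism in codimension one, and that $-F$ is relatively nef because $-D$ is $g$-ample (the paper applies the negativity lemma over $Z$ rather than over $Y^+$, which changes nothing). The two proofs then part ways. The paper never writes down $B_{X^+}$ explicitly: it runs a relative MMP on the base $W=Y'$ over $Y$ to contract exactly the divisors along which the induced discriminant is negative, so that Lemma \ref{res-lem} yields a genuinely \emph{very} basic fibration on the resulting model, checks that the map to $Y^+$ survives this MMP precisely because $F\geq 0$, and then descends to $Y^+$ by Lemma \ref{slc-lem} applied with $E=F$. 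You instead bypass the MMP entirely, define $B_{X^+}=B_{X_W}-f_W^*F$ by hand, and verify conditions $(1)$--$(6)$ of Definition \ref{slc-def} directly; the only nontrivial point, condition $(6)$, is a codimension-one statement on the normal variety $Y^+$, and since every prime divisor of $Y^+$ is the strict transform of one on $Y$ along whose generic point $q$ and $q^+$ are isomorphisms and $F=0$, it is inherited from $f$ by the first half of the proof of Lemma \ref{res-lem}. Your route is more elementary --- it avoids invoking the existence and termination of a relative MMP for the (generalized) pair on $W$, which is the technical heart of the paper's argument --- at the cost of redoing the codimension-one computation rather than quoting Lemmas \ref{slc-lem} and \ref{res-lem} as black boxes; the paper's route has the side benefit of producing an intermediate very basic slc-trivial fibration on a model dominating both $Y$ and $Y^+$.
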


\begin{proof}
Take a common resolution of $p$ such that
$$
\xymatrix{
&Y'\ar[ld]_{\alpha}\ar[rd]^{\beta}\\ 
Y \ar@{-->}[rr]^p & & Y^+
}
$$
commutes over $Z$.
Let $K_{Y'}=\alpha^*D+E_1$
and 
$K_{Y'}=\beta^*D^++E_2$.
Let $h=g\circ\alpha=g^+\circ \beta$. Then $E_1-E_2$ is $h$-nef 
and exceptional since $g_*(D)=g^+_*(D^+)$.
By the negative lemma (cf. \cite{kollar-mori}*{Lemma 3.39}), $E_2-E_1$ is effective.
Let $E_1+\alpha^*M_Y=E^+-E^-$ where $E^+$ and $E^-$ are both effective.
Then $K_{Y'}=\alpha^*D+E_1=\alpha^*(K_Y+B_Y)+E^+-E^-$.
By \cite{fujino-slc-trivial}*{Theorem 1.7}, $E^-$ is a boundary.
 Therefore, we can 
run the relative minimal model program with respect to 
$K_{Y'}+E^-+\alpha^*M_Y\sim_{\mathbb Q, \alpha}E^+$ over $Y$
and contract the divisor $E^+$ exactly.  
Replacing $Y'$ by the relative minimal model,
we can assume that  $B_{Y'}$ (the pushforward of $E^-$) is effective.
Note that 
$$
K_{Y'}+E^-+\alpha^*M_Y\sim_{\mathbb Q, \beta}E^++(E_2-E_1)
$$ over $Y^+$.
Therefore, by viewing the contraction of $E^+$ as steps of the relative minimal model program over $Y^+$,
we can see that $\beta$ is still a morphism since $E_2-E_1$ is effective. 
Then there is an induced very basic slc-trivial fibration on $Y'$ with 
$D'=\alpha^*D=\beta^*D^++(E_2-E_1)$ by Lemma \ref{res-lem}
and  an  induced very basic slc-trivial fibration 
on $Y^+$ with $D^+$ by Lemma \ref{slc-lem}.
\end{proof}

As we mentioned before, the pair $(Y, B_Y+M_Y)$ is a generalized lc pair in the sense of 
\cite{bz}*{Definition 4.1}. With a scaling, we can run the LMMP on $K_Y+B_Y+M_Y$ as in \cite{bz}*{Section 4}.
In general, it is hard to answer the termination of the LMMP unless adding suitable assumptions,
cf. \cite{bz}*{Lemma 4.4}. But in dimension 3, the LMMP of generalized lc pairs is settled down,
combining the fully understanding of the MMP of lc pairs. We refer to \cite{hl} for more details.
A private note of Han preparing for \cite{hl} shows the details of the termination of the LMMP in dimension 3.
Following Lemma \ref{flip-lem} and the understanding of the LMMP of generalized lc pairs in dimension 3,
we have the following results.
 
\begin{lem}\label{term-lem}
$D$-flips terminate in dimension three.
\end{lem}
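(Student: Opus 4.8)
The plan is to reduce the termination of $D$-flips to the termination of an LMMP for generalized lc pairs in dimension three, which is available by the discussion preceding the lemma (the termination of the LMMP of generalized lc pairs in dimension $3$, following \cite{hl} and Han's private note). The key point is that, by Proposition \ref{dis-bound} and the remark after Corollary \ref{cart-cor}, the data $(Y, B_Y + M_Y)$ attached to a very basic slc-trivial fibration is a generalized lc pair with nef part $M_Y$, and the structure decomposition reads $D = K_Y + B_Y + M_Y$; so a $D$-flip is precisely a flip for this generalized pair in the sense of \cite{bz}*{Section 4}. First I would check that each $D$-flip $Y \dashrightarrow Y^+$ of Definition \ref{d-flip} is a $(K_Y + B_Y + M_Y)$-flip of generalized lc pairs: condition (2) gives that $-D = -(K_Y+B_Y+M_Y)$ is $g$-ample with $\Ex(g)$ of codimension $\geq 2$, condition (3) gives the analogous ampleness of $D^+$ on $Y^+$, and Lemma \ref{flip-lem} guarantees that the generalized lc structure (equivalently, the very basic slc-trivial fibration) is preserved after the flip, with $B_{Y^+} + M_{Y^+}$ again of the required form. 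Thus a sequence of $D$-flips is a sequence of flips in a $(K_Y+B_Y+M_Y)$-LMMP of generalized lc pairs.

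Next I would invoke the termination statement for the LMMP of generalized lc pairs in dimension three. Since $\dim Y = 3$, and since by Proposition \ref{moduli-nef} (applied on a high enough model as in Corollary \ref{dlt-cor}) $M_Y$ descends to an honest nef $\mathbb{Q}$-Cartier divisor on a birational model, the pair $(Y, B_Y + M_Y)$ fits into the framework where \cite{bz}*{Section 4} together with the three-dimensional results of \cite{hl} apply. A minor technical point to address is that $Y$ need not be $\mathbb{Q}$-factorial a priori; here I would pass to a $\mathbb{Q}$-factorial dlt blow-up as in Corollary \ref{dlt-cor}, on which an induced very basic slc-trivial fibration still exists, run the LMMP with scaling there, and note that $D$-flips upstairs dominate the $D$-flips we started with. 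Since $K_Y + B_Y + M_Y = D$ and $-D$ is negative on the flipping locus at each step, the sequence of flips is strictly decreasing for the relevant invariants used in the three-dimensional termination argument, so it must terminate.

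The main obstacle I anticipate is purely bookkeeping rather than conceptual: one must verify that after each $D$-flip the induced generalized lc pair genuinely matches the pair produced by Lemma \ref{flip-lem}, i.e.\ that the nef part $M_{Y^+}$ obtained from the very basic slc-trivial fibration structure agrees with the pushforward of the nef part of $M_Y$ under the flip, so that the sequence of $D$-flips is literally one LMMP for a single fixed generalized lc pair rather than a sequence of flips for pairs that change along the way. This is handled inside the proof of Lemma \ref{flip-lem} via the negative lemma (cf. \cite{kollar-mori}*{Lemma 3.39}) and the fact (from \cite{fujino-slc-trivial}*{Theorem 1.7}) that the discriminant part stays a sub boundary; once this compatibility is in place, termination follows directly from the three-dimensional termination of generalized lc flips. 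Accordingly, I would keep the proof short: reduce to the $\mathbb{Q}$-factorial dlt case, identify $D$-flips with generalized lc flips using Lemma \ref{flip-lem}, and cite the termination of the LMMP for generalized lc pairs in dimension three.
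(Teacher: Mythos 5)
Your proposal matches the paper's argument: the paper likewise observes that $(Y,B_Y+M_Y)$ is a generalized lc pair in the sense of \cite{bz}, uses Lemma \ref{flip-lem} to see that $D$-flips stay within the framework, and then appeals to the termination of the LMMP for generalized lc pairs in dimension three (via \cite{hl} and Han's note). Your write-up is in fact more detailed than the paper's, which states the lemma with only the preceding discussion as justification.
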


\begin{lem}\label{mmp-lem}
We can run the $D$-minimal model program 
 beginning from a very basic slc-trivial fibration $f\colon (X, B_X) \to (Y,D)$ 
where $Y$ is a  $\mathbb Q$-factorial threefold and $D$ is big, 
ending up with a very basic slc-trivial fibration $f^*\colon (X^*, B_{X^*}) \to (Y^*, D^*)$ such that 
$D^*$ is nef and big. Furthermore, if $(Y, B_Y)$ is plt (resp. dlt), then so is $(Y^*, B_{Y^*})$.
\end{lem}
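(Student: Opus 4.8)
The plan is to run the $D$-minimal model program as a minimal model program for the generalized lc pair $(Y,B_Y+M_Y)$, using that $D=K_Y+B_Y+M_Y$. Since $Y$ is a $\mathbb Q$-factorial threefold, all of $D$, $K_Y+B_Y$ and $M_Y$ are $\mathbb Q$-Cartier, $(Y,B_Y)$ is log canonical by Corollary \ref{cart-cor}, and the cone theorem, the existence of divisorial contractions and of $D$-flips, and the termination of the program with scaling of a fixed ample divisor are available from the theory of generalized lc pairs in dimension three (see \cite{hl}); combined with Lemma \ref{term-lem} and the fact that divisorial contractions drop the Picard number, the program terminates. So first I would fix an ample $\mathbb Q$-divisor $H$ on $Y$ and run the $(K_Y+B_Y+M_Y)$-MMP with scaling of $H$.

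Next I would verify that each step remains inside the class of very basic slc-trivial fibrations. Since $D$ is big we may write $D\sim_{\mathbb Q}A+E$ with $A$ ample and $E\ge 0$; a general member $C$ of a covering family of curves contracted by a fiber-type extremal contraction is not contained in $\Supp E$, so $D\cdot C=A\cdot C+E\cdot C>0$, which is impossible for a $D$-negative ray. Hence every step is either a divisorial contraction $\pi\colon Y\to Y_1$ or a $D$-flip. For a divisorial contraction we have $D=\pi^*D_{Y_1}+aE$ with $D_{Y_1}:=\pi_*D$ $\mathbb Q$-Cartier, $E$ the prime $\pi$-exceptional divisor and $a>0$, and $\pi_*\mathcal O_Y(raE)\simeq\mathcal O_{Y_1}$ for $r$ making $raE$ Cartier (the standard fact that $-E$ is $\pi$-ample); thus Lemma \ref{slc-lem}, in its birational case, yields an induced very basic slc-trivial fibration on $Y_1$. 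For a $D$-flip, Lemma \ref{flip-lem} yields the induced very basic slc-trivial fibration on the flipped model. By the negative-lemma computation used in the proof of Corollary \ref{dlt-cor}, the discriminant and moduli parts of the induced fibration are the strict transforms of $B_Y$ and $M_Y$, so after the whole program $B_{Y^*}=\pi_*B_Y$ and $M_{Y^*}=\pi_*M_Y$ for the composite birational map $\pi\colon Y\dashrightarrow Y^*$.

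Finally I would read off the conclusions on the output $f^*\colon (X^*,B_{X^*})\to(Y^*,D^*)$. Since no fiber-type contraction occurs, the program ends at a model on which $D^*$ is nef. Bigness is preserved at every step: a divisorial contraction $\pi$ identifies $H^0(Y,mD)$ with $H^0(Y_1,mD_{Y_1})$ because $E$ is effective and $\pi$-exceptional, and a flip is an isomorphism in codimension one, so $\kappa(D^*)=\kappa(D)=\dim Y^*$. Moreover every $D$-negative curve $C$ satisfies $(K_Y+B_Y)\cdot C=D\cdot C-M_Y\cdot C<0$ because $M_Y$ is nef, so the whole program is simultaneously a $(K_Y+B_Y)$-minimal model program for the log canonical pair $(Y,B_Y)$; hence plt-ness, resp. dlt-ness, of $(Y,B_Y)$ is inherited by $(Y^*,B_{Y^*})$, these properties being preserved under $(K_Y+B_Y)$-negative divisorial contractions and flips.

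The main obstacle I anticipate is not a single estimate but the coordination of three external inputs — the existence and termination of the generalized lc MMP in dimension three, the termination of $D$-flips (Lemma \ref{term-lem}), and the persistence of the very basic slc-trivial structure under contractions and flips (Lemmas \ref{slc-lem} and \ref{flip-lem}) — while at the same time excluding Mori fiber spaces via the bigness of $D$ and checking that the discriminant part remains a boundary throughout, so that $(Y^*,B_{Y^*})$ is an honest, not merely sub-, pair.
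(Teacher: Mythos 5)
Your overall strategy is the same as the paper's: the paper gives no written proof of Lemma \ref{mmp-lem}, but states that it follows from Lemma \ref{flip-lem} together with the LMMP for generalized lc pairs in dimension three (citing \cite{hl} and an unpublished note of Han), and your proposal is a reasonable fleshing-out of exactly that plan. The exclusion of fiber-type contractions via bigness, the use of Lemma \ref{slc-lem} (with $rE$ the multiple of the exceptional divisor) for divisorial contractions and of Lemma \ref{flip-lem} for flips, and the preservation of bigness are all correct; the identification of $B_{Y^*}$ and $M_{Y^*}$ with strict transforms is also correct, although it follows more directly from the fact that the discriminant is computed over generic points of prime divisors (where each step is an isomorphism) than from the negativity computation of Corollary \ref{dlt-cor}, which concerns pullbacks under a dlt blow-up rather than pushforwards under a contraction.

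The genuine gap is in your justification of the ``furthermore'' clause. You write that every $D$-negative curve $C$ satisfies $(K_Y+B_Y)\cdot C=D\cdot C-M_Y\cdot C<0$ ``because $M_Y$ is nef.'' But by Proposition \ref{moduli-nef} the moduli part is only nef as a b-divisor: there is a model $p\colon \bar Y\to Y$ with $M_{\bar Y}$ nef and $M_Y=p_*M_{\bar Y}$, and by negativity $p^*M_Y=M_{\bar Y}+E$ with $E\geq 0$ $p$-exceptional. Hence $M_Y\cdot C\geq 0$ is guaranteed only for curves $C$ not contained in the codimension-two set $p(\Exc(p))$. For a divisorial $D$-negative ray this is harmless (the contracted curves sweep out a divisor, so a general one avoids $p(\Exc(p))$ and relative Picard number one does the rest), and the same covering argument handles the fiber-type case; but for a flipping ray the contracted locus is a curve which may well lie inside $p(\Exc(p))$, and then $M_Y\cdot R\geq 0$ — equivalently, that the $D$-flip is also a $(K_Y+B_Y)$-nonpositive step, which is what you need to compare discrepancies and preserve plt/dlt — does not follow from what you wrote. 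Corollary \ref{cart-cor} keeps $(Y_i,B_{Y_i})$ log canonical at every step for free, but plt/dlt can a priori be destroyed by a flip with $(K_Y+B_Y)\cdot R>0$. To close this you must either invoke the precise form of the cone/contraction theorem for generalized lc pairs in dimension three (which is where \cite{hl} and Han's note enter: the statement that $(K_Y+B_Y+M_Y)$-negative extremal rays satisfy $M_Y\cdot R\geq 0$ is part of that package and is not formal), or give a separate argument for flipping rays.
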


Now we are ready to prove Theorem \ref{main-cor}:

\begin{proof}[Sketch of proof]
We reduce to prove that the ring $R(Y, D)$ is finitely generated
for a very basic lc-trivial fibration $f\colon (X, B_X) \to (Y,D)$ where $Y$ is
a  $\mathbb Q$-factorial threefold, $(Y, B_Y)$ is plt and $D$ is big.
By Lemma \ref{mmp-lem}, we can further assume that $D$ is nef.
Then it suffices to prove that $D$ is semi-ample.
By Kawamata--Shokurov basepoint-free theorem (cf. \cite{fl-plt}*{Lemma 4.3}), 
it suffices to prove that $D|_{B^{=1}_Y}$ is semi-ample where $B^{=1}_Y$ is normal.
By connectedness lemma (see \cite{fl-plt}*{Corollary 4.2} or Corollary \ref{ind-cor}),
there is an induced very basic klt-trivial fibration on $B^{=1}_Y$.
It follows from Theorem \ref{main-thm} that $D|_{B^{=1}_Y}$ is semi-ample. 
This is what we want.
\end{proof}



\end{document}